\definecolor{blue}{RGB}{51,103, 255}
\DeclareMathAlphabet{\cmcal}{OMS}{cmsy}{m}{n}
\DeclareSymbolFont{euletters}{U}{eur}{m}{n}
\DeclareSymbolFont{eufrakletters}{U}{euf}{m}{n}
\DeclareFontFamily{U}{wncy}{}
    \DeclareFontShape{U}{wncy}{m}{n}{<->wncyr10}{}
    \DeclareSymbolFont{mcy}{U}{wncy}{m}{n}
    \DeclareMathSymbol{\Sha}{\mathord}{mcy}{"58}
\newtheorem{thm}{Theorem}[section]
\newtheorem{cor}[thm]{Corollary}
\newtheorem{lem}[thm]{Lemma}
\newtheorem{prop}[thm]{Proposition}
\newtheorem{conj}[thm]{Conjecture}
\theoremstyle{defn}
\newtheorem{defn}[thm]{Definition}
\theoremstyle{remark}
\newtheorem{rem}[thm]{\bf{Remark}}
\newtheorem{notation}[thm]{\bf{Notation}}
\numberwithin{equation}{section} \numberwithin{table}{subsection}
\newtheorem*{theorem*}{\bf{Theorem}}
\newtheorem*{claim*}{\bf{Claim}}
\newtheorem*{remark*}{\bf{Remark}}
\newtheorem*{remarks*}{\bf{Remarks}}
\newtheorem*{example*}{\bf{Example}}
\newtheorem*{examples*}{\bf{Examples}}
\newcommand{\C}{{\mathbf{C}}}
\newcommand{\Q}{{\mathbf{Q}}}
\newcommand{\Z}{{\mathbf{Z}}}
\newcommand{\scC}{{\mathscr{C}}}
\newcommand{\scS}{{\mathscr{S}}}
\newcommand{\scU}{{\mathscr{U}}}
\newcommand{\tn}{\textnormal}
\def\a{\alpha}
\def\d{\delta}
\def\e{\epsilon}
\def\ve{\varepsilon}
\def\g{\gamma}
\def\s{\sigma}
\def\z{\zeta}
\def\p{\varphi}
\def\G{\Gamma}
\def\<{\left\langle}
\def\>{\right\rangle}
\def\gcd{\textnormal{gcd}}
\def\min{\textnormal{min}}
\newcommand{\zmod}[1]{{\Z/{#1}\Z}}
\newcommand{\arinj}{\ar@{^(->}}
\newcommand{\arsurj}{\ar@{->>}}
\newcommand{\arsub}{\ar@{}[r]|-*[@]{\subset}}
\newcommand{\arsup}{\ar@{}[r]|-*[@]{\supset}}
\newcommand{\arcap}{\ar@{}[d]|-*[@]{\subset}}
\newcommand{\arcup}{\ar@{}[u]|-*[@]{\subset}}
\newcommand{\arin}{\ar@{}[u]|-*[@]{\in}}
\renewcommand{\pmod}[1]{{\,(\textnormal{mod}\hspace{1mm} {#1})}}
\renewcommand{\mod}[1]{{\,\textnormal{mod}\hspace{1mm} {#1}}}
\newcommand{\Gal}{{\textnormal{Gal}}}
\newcommand{\SL}{{\textnormal{SL}}}
\newcommand{\sHom}{{\mathscr{H}\kern-.5pt om}}
\newcommand{\sExt}{{\mathscr{E}\kern-.5pt xt}}
\newcommand{\tor}{{\textnormal{tors}}}
\newcommand{\lcm}{{\textnormal{lcm}}}
\renewcommand{~}{\hspace*{0.5mm}}
\newcommand{\ov}{\overline}
\newcommand{\ms}{\medskip}
\newcommand{\sm}{\smallsetminus}
\mathchardef\hyp="2D
\newcommand{\gauss}[1]{\left\lfloor #1 \right\rfloor}
\newcommand{\qa}{{\quad \text{and} \quad}}
\newcommand{\qqa}{{~ \text{ and } ~}}
\newcommand{\qqo}{{~ \text{ or } ~}}
\newcommand{\mat}[4]{
 \left(  \begin{smallmatrix} #1 & #2 \\ #3 & #4 \end{smallmatrix} \right)}
\newcommand{\vect}[2]{
 \left(  \begin{smallmatrix} #1 \\ #2 \end{smallmatrix} \right)}
\newcommand{\leg}[2]{\genfrac(){}{}{#1}{#2}}
\newcommand{\hide}[1]{}
\newcommand{\dd}{~|~}
\begin{document}

\title{The rational cuspidal subgroup of $J_0(p^2M)$ with $M$ squarefree}

\author{Jia-Wei Guo}
\address{Jia-Wei Guo, Department of Mathematics, National Taiwan University, Taipei 10617, Taiwan}
\email{\textnormal{jiaweiguo312@gmail.com}}

\author{Yifan Yang}
\address{Yifan Yang, Department of Mathematics, National Taiwan University and National Center for Theoretical Science, Taipei 10617, Taiwan}
\email{\textnormal{yangyifan@ntu.edu.tw}}

\author{Hwajong Yoo}
\address{Hwajong Yoo, College of Liberal Studies and Research Institute of Mathematics, Seoul National University, Seoul 08826, South Korea}
\email{\textnormal{hwajong@snu.ac.kr}}                                                                  

\author{Myungjun Yu}
\address{Myungjun Yu, Department of Mathematics, Yonsei University, Seoul 03722, South Korea}
\email{\textnormal{mjyu@yonsei.ac.kr}}      
                                                           
\subjclass[2010]{11G18, 14G05, 14G35}    
\maketitle
\begin{abstract}
For a positive integer $N$, let $X_0(N)$ be the modular curve over $\Q$ and $J_0(N)$ its Jacobian variety. We prove that the rational cuspidal subgroup of $J_0(N)$ is equal to the rational cuspidal divisor class group of $X_0(N)$ when $N=p^2M$ for any prime $p$ and any squarefree integer $M$. To achieve this we show that all modular units on $X_0(N)$ can be written as products of certain functions $F_{m, h}$, which are constructed from generalized Dedekind eta functions. Also, we determine the necessary and sufficient conditions for such products to be modular units on $X_0(N)$ under a mild assumption. 
\end{abstract}
\setcounter{tocdepth}{1}
\tableofcontents


\section{Introduction}
For a positive integer $N$, let $X_0(N)$ be the canonical model over $\Q$ of the modular curve associated to the congruence subgroup 
\begin{equation*}
\G_0(N)=\left\{ \mat a b c d  \in \SL(2, \Z) : c \equiv 0 \pmod N\right\},
\end{equation*}
and let $J_0(N)$ be its Jacobian variety. We are interested in the rational torsion subgroup of $J_0(N)$, denoted by $J_0(N)(\Q)_\tor$. When $N$ is a prime, Mazur proved the following \cite[Th. 1]{M77}, which was known as Ogg's conjecture \cite[Conj. 2]{Og75}:
\begin{thm}[Ogg's conjecture]
The rational torsion subgroup $J_0(N)(\Q)_\tor$ is a cyclic group of order $n$, generated by the linear equivalence class of the difference of the two cusps $0$ and $\infty$, where $n$ is the numerator of $\frac{N-1}{12}$.
\end{thm}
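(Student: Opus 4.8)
The plan is to establish the two halves of the statement separately: a \emph{lower bound} exhibiting an explicit cyclic subgroup of order $n$, and an \emph{upper bound} showing that no further rational torsion can occur. The lower bound is classical and constructive, while the upper bound is the deep half; I would attack it through the arithmetic of the Eisenstein ideal in the Hecke algebra $\T \subset \End(J_0(N))$ together with the Galois representations carried by $J_0(N)$.

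\textbf{Lower bound.} Since $N$ is prime, $X_0(N)$ has exactly the two cusps $0$ and $\infty$, both defined over $\Q$, so $c := [(0)-(\infty)] \in J_0(N)(\Q)$. To compute its order I would write down a modular unit whose divisor is supported on these cusps. The eta quotient $(\eta(z)/\eta(Nz))^{k}$, with $k$ chosen to clear the fractional exponent coming from the $1/24$-weight of $\eta$, is holomorphic and nonvanishing on the upper half-plane and transforms correctly under $\Gamma_0(N)$; its divisor is a rational multiple of $(0)-(\infty)$, and an expansion at the two cusps (Ligozat's criterion) shows that the precise order of $c$ equals the numerator of $(N-1)/12$. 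This yields an embedding $\Z/n\Z \hookrightarrow J_0(N)(\Q)_\tor$ with generator $c$.

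\textbf{Upper bound.} Suppose a prime $\ell$ divides $\# J_0(N)(\Q)_\tor$ and let $P$ be a rational point of order $\ell$. The crucial first step is to show $P$ is \emph{Eisenstein}, i.e.\ killed by $\mathfrak{I} = \langle T_q - q - 1 : q \neq N\rangle + \langle w_N + 1\rangle$. For a prime $q \nmid N\ell$ the curve $X_0(N)$ has good reduction, so reduction mod $q$ is injective on $\langle P\rangle$ and fixes it (as $P$ is $\Q$-rational); the Eichler--Shimura relation $T_q = \Frob_q + q\,\Frob_q^{-1}$ on the reduced Jacobian then forces $T_q P = (1+q)P$, whence $(T_q - q - 1)P = 0$. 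Since $w_N$ interchanges $0$ and $\infty$ it acts by $-1$ on $c$, and one checks the same on the relevant $\ell$-torsion, so $P \in J_0(N)[\mathfrak{I}]$. I would then invoke Mazur's index computation $\T/\mathfrak{I} \cong \Z/n\Z$, reflecting the constant term of the weight-$2$ Eisenstein series of level $N$, and analyze an Eisenstein maximal ideal $\mathfrak{m} \supset \mathfrak{I}$ of residue characteristic $\ell$: one shows $J_0(N)[\mathfrak{m}]$ is two-dimensional over $\T/\mathfrak{m}$, an extension of the trivial character by the mod-$\ell$ cyclotomic character, and then localizes at the bad primes. At $N$ the Deligne--Rapoport model (two rational copies of the $j$-line crossing transversally at the supersingular points) controls the local representation, while at $\ell$ the extension class is pinned down to be precisely the one realized by the cuspidal group. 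The conclusion is that the rational $\mathfrak{I}$-torsion is cyclic of order $n$ and coincides with $\langle c\rangle$.

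\textbf{Main obstacle.} The genuinely difficult point is this final Galois-theoretic analysis of $J_0(N)[\mathfrak{m}]$: proving that the reducible mod-$\ell$ representation cannot produce a rational point outside the cuspidal line unless $\ell \mid n$. This rigidity --- that the relevant extension class is nonzero and uniquely determined, so that no unwanted Galois-stable rational line appears --- is the technical heart of the argument, and it is where the fine geometry of $X_0(N)$ in characteristic $N$ and a careful ramification (\v{C}ebotarev) analysis enter decisively.
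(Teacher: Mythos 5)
The paper does not actually prove this statement: it is quoted verbatim as Mazur's theorem \cite[Th.~1]{M77} and used, together with the Manin--Drinfeld theorem, as known input for everything else. So there is no internal argument to compare yours against; the only yardstick is Mazur's own proof, and what you have written is a high-level reconstruction of exactly that proof --- lower bound from the cuspidal class via modular units, upper bound via the Eisenstein ideal, Eichler--Shimura, and the analysis of Eisenstein maximal ideals.

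As a proof, however, your text has genuine gaps rather than merely compressed steps. First, in the lower bound, Ligozat's criterion only gives you the \emph{upper} bound on the order of $c=[(0)-(\infty)]$: it produces the smallest power of $\eta(z)/\eta(Nz)$ that descends to $X_0(N)$, whose divisor is $n\bigl((0)-(\infty)\bigr)$, hence the order of $c$ divides $n$. To conclude the order is exactly $n$ you also need that every modular unit on $X_0(N)$ ($N$ prime) is, up to constants, a power of $\eta(z)/\eta(Nz)$ (Ogg's argument); this is classical but must be said, otherwise ``precise order equals $n$'' is unjustified. Second, and decisively, the entire upper bound is deferred to the very results that constitute Mazur's theorem: the index computation $\mathbf{T}/\mathfrak{I}\cong\Z/n\Z$, the statement that $J_0(N)[\mathfrak{m}]$ is two-dimensional over $\mathbf{T}/\mathfrak{m}$ (multiplicity one), the local analysis at $N$ via Deligne--Rapoport and at $\ell$ via the classification of the extension class, and the notoriously delicate primes $\ell=2,3$. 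Invoking ``Mazur's index computation'' inside a proof of Mazur's theorem is circular as written; these steps are not routine verifications but the substance of a long paper, and your proposal names them without supplying them. (Smaller points: the annihilation of a rational torsion point by $1+w_N$ does not simply follow from the action of $w_N$ on $c$ and needs its own argument; the Eichler--Shimura step is fine, since reduction modulo a good prime $q$ is injective on prime-to-$q$ torsion.) In short: the strategy is the right one --- indeed the only known one --- but what you have is a correct roadmap of \cite{M77}, not a proof.
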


In order to generalize this theorem, we first introduce two subgroups $\scC(N)$ and $\scC_N(\Q)$ of $J_0(N)(\Q)_\tor$. Recall that a divisor on $X_0(N)$ is called \textit{cuspidal} if its support lies on the cusps of $X_0(N)$. Also, a cuspidal divisor $D$ is called \textit{rational} if $\s(D)=D$ for all $\s \in \Gal(\ov{\Q}/\Q)$. 
Let $\scC(N)$ be the \textit{rational cuspidal divisor class group} of $X_0(N)$, defined as a subgroup of $J_0(N)(\Q)$ generated by the linear equivalence classes of rational cuspidal divisors of degree $0$ on $X_0(N)$. Also, let $\scC_N$ be the \textit{cuspidal subgroup} of $J_0(N)$, defined as a subgroup of $J_0(N)(\ov{\Q})$ generated by the linear equivalence classes of cuspidal divisors of degree $0$ on $X_0(N)$. Furthermore, let 
\begin{equation*}
\scC_N(\Q):=\scC_N \cap J_0(N)(\Q)
\end{equation*}
be the \textit{rational cuspidal subgroup} of $J_0(N)$. Note that we have
\begin{equation*}
\scC(N) \subseteq \scC_N(\Q) \subseteq J_0(N)(\Q)_\tor,
\end{equation*}
where the first (resp. second) inclusion follows by definition (resp. the theorem of Manin and Drinfeld \cite{Ma72, Dr73}). A natural generalization of the above theorem is the following.

\begin{conj}[Generalized Ogg's conjecture]\label{conjecture: GOC}
For any positive integer $N$, we have
\begin{equation*}
J_0(N)(\Q)_\tor = \scC_N(\Q).
\end{equation*}
\end{conj}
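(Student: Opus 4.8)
The containments recorded above give $\scC(N)\subseteq\scC_N(\Q)\subseteq J_0(N)(\Q)_\tor$, the second via Manin--Drinfeld, so the entire content of the conjecture is the reverse inclusion $J_0(N)(\Q)_\tor\subseteq\scC_N(\Q)$: no rational torsion class may escape the cuspidal subgroup. The plan is to work one rational prime $\ell$ at a time on the $\ell$-primary part $J_0(N)(\Q)[\ell^\infty]$, through the action of the Hecke algebra $\T=\Z[T_n:n\geq1]$ and its Eisenstein ideals. First I would show that this rational torsion is supported only on Eisenstein maximal ideals. Following Mazur, a $\Q$-rational point lying in $J_0(N)[\fm]$ produces a $\Gal(\ov{\Q}/\Q)$-fixed line, so the trivial character occurs as a constituent of the two-dimensional representation $\rho_\fm$ attached to $\fm$; this forces $\rho_\fm$ to be reducible, and hence $\fm$ to be \emph{Eisenstein}.

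The remaining task is, for each Eisenstein maximal ideal $\fm$, to match the $\fm$-part of the torsion with the cuspidal subgroup, and this is exactly where the explicit description developed in this paper enters. Identifying $\scC_N(\Q)$ with the rational cuspidal divisor class group $\scC(N)$ and computing its order and $\T$-module structure from the modular units $F_{m,h}$ pins down the precise target that the Eisenstein torsion must fill. Concretely, I would combine (i) an upper bound on the $\fm$-primary torsion coming from the index of the Eisenstein ideal in the localized algebra $\T_\fm$ --- computed from the constant terms of Eisenstein series, equivalently from $L$-values --- together with the injection $J_0(N)(\Q)[\ell^\infty]\hookrightarrow J_0(N)(\bbF_q)$ obtained by reduction at a good prime $q\nmid\ell N$, with (ii) the matching lower bound furnished by the explicit cuspidal divisors. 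If the two bounds coincide $\fm$ by $\fm$, the inclusion $J_0(N)(\Q)_\tor\subseteq\scC_N(\Q)$ follows.

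The hard part will be step (ii) at the bad primes. For $\ell\nmid6$ and $N$ squarefree, the Hecke algebra is Gorenstein at Eisenstein primes and one has multiplicity one, $\dim_{\T/\fm}J_0(N)[\fm]=2$, which forces the $\fm$-torsion to be cyclic and therefore cuspidal, saturating the lower bound. In the full generality of the conjecture --- and already in the regime $N=p^2M$ studied here --- two difficulties arise: at $\ell=2,3$ the Eisenstein ideal can fail to be locally principal and multiplicity one may break down, so the clean cyclicity argument is unavailable; and for non-squarefree level there are many more Eisenstein maximal ideals, indexed by the divisors of $N$ and by characters, whose contributions must be disentangled before the index computation can be matched against the cuspidal order. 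I therefore expect the core of any complete proof to be a delicate local analysis of $\T_\fm$ and of $J_0(N)[\fm^\infty]$ at these primes, using the explicit cuspidal generators to exhibit enough Eisenstein torsion to meet the upper bound; the present paper's identity $\scC(N)=\scC_N(\Q)$ for $N=p^2M$ is precisely the ingredient that renders that lower bound computable.
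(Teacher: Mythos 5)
The statement you were asked to prove is Conjecture~1.2 of the paper, the Generalized Ogg Conjecture, and the paper itself does not prove it: it is stated as a conjecture, the authors explicitly defer its discussion to \cite{Yoo1}, and it remains open. What the paper actually proves is the different statement $\scC_N(\Q)=\scC(N)$ for $N=p^2M$ (Theorem~1.4), which concerns only the cuspidal side and says nothing about whether all rational torsion is cuspidal. So there is no proof in the paper to compare yours against, and your proposal must be judged on its own terms; on those terms it is not a proof but a research program, as you yourself concede when you write that you ``expect the core of any complete proof to be a delicate local analysis'' that you do not carry out.

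The strategy you outline --- localize at maximal ideals of $\T$, show rational torsion is supported at Eisenstein ideals via the Galois-fixed line in $J_0(N)[\fm]$, then match an Eisenstein-index upper bound against a cuspidal lower bound --- is indeed the standard route, going back to Mazur for prime $N$ and extended by Ohta, Yoo and others to squarefree levels away from residue characteristics $2$ and $3$. But each step has a genuine gap at the generality of the conjecture, and these gaps are exactly where its difficulty lives. First, the reducibility argument requires $\ell$ odd (at $\ell=2$ the trivial and mod-$2$ cyclotomic characters coincide, so one cannot conclude $\fm$ is Eisenstein in the usual sense), and at primes $\ell\mid N$ the input from Boston--Lenstra--Ribet and the analysis of $U_\ell$ need separate, currently incomplete, treatment. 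Second, the upper bound: for non-squarefree $N$ the Eisenstein maximal ideals are not classified, the Eisenstein ideal need not be locally principal, $\T_\fm$ need not be Gorenstein, and no general index formula is known; asserting that bounds ``coincide $\fm$ by $\fm$'' is a restatement of the open problem, not an argument. Third, even your lower bound is conditional: you invoke the identification $\scC_N(\Q)=\scC(N)$, which is known only in the special cases listed in the paper (including its Theorem~1.4), not for all $N$. In short, your proposal correctly names the expected architecture of a proof and correctly names the obstructions, but naming the obstructions is not overcoming them, and no step of the reverse inclusion $J_0(N)(\Q)_\tor\subseteq\scC_N(\Q)$ is actually established.
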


We also expect the following, which is \cite[Conj. 1.3]{Yoo1}.
\begin{conj}\label{conjecture}
For any positive integer $N$, we have
\begin{equation*}
\scC_N(\Q)=\scC(N).
\end{equation*}
\end{conj}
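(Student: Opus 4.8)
The plan is to convert the conjecture into a single Galois-cohomological vanishing statement and then to attack that statement with the explicit description of modular units supplied by the structural results above. Throughout write $G=\Gal(\ov\Q/\Q)$, let $\mathcal D$ be the free abelian group on the cusps of $X_0(N)$ (over $\ov\Q$), let $\mathcal D^0\subseteq\mathcal D$ be its degree-zero subgroup, and let $P\subseteq\mathcal D^0$ be the subgroup of divisors of modular units (the principal cuspidal divisors), so that $\scC_N=\mathcal D^0/P$. The divisor map identifies $P$ with the group of modular units modulo constants as $G$-modules, and since every cusp of $X_0(N)$ is defined over $\Q(\zeta_N)$, the action of $G$ on $\mathcal D$, on $\mathcal D^0$ and on $P$ factors through the finite group $\mathcal G=\Gal(\Q(\zeta_N)/\Q)\cong(\Z/N\Z)^\times$.

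First I would record the two short exact sequences of $G$-modules
\begin{equation*}
0\to\mathcal D^0\to\mathcal D\xrightarrow{\ \deg\ }\Z\to 0,\qquad 0\to P\to\mathcal D^0\to\scC_N\to 0,
\end{equation*}
and extract the key reduction. Because $\mathcal D$ is a permutation module (Galois permutes the cusps) one has $H^1(G,\mathcal D)=0$, and because $\infty$ is a rational cusp the degree map is surjective on $G$-invariants; the first sequence then forces $H^1(G,\mathcal D^0)=0$. Feeding this into the long exact sequence of the second, and using $\scC_N(\Q)=\scC_N^{G}$ together with $\scC(N)=\mathrm{im}\bigl((\mathcal D^0)^{G}\to\scC_N\bigr)$, yields a natural isomorphism
\begin{equation*}
\scC_N(\Q)/\scC(N)\;\cong\;H^1(G,P)\;\cong\;H^1(\mathcal G,P),
\end{equation*}
the last identification by inflation–restriction together with $\Hom_{\mathrm{cont}}(G_{\Q(\zeta_N)},P)=0$ (as $P$ is finitely generated and torsion free). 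Thus \emph{Conjecture 1.3 is equivalent to the vanishing $H^1(\mathcal G,P)=0$}, a statement about a finite group acting on the modular-unit lattice.

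Next I would compute $H^1(\mathcal G,P)$ from the explicit generators. By the structural theorem every modular unit is a product of the functions $F_{m,h}$, so their divisors span $P\otimes\Q$, while the stated necessary-and-sufficient conditions for such a product to be a modular unit pin down the relation lattice exactly. Since $\mathcal G$ permutes the $F_{m,h}$ according to its action on the cusps and on their leading Fourier coefficients, this data realizes $P$ as the image of a $\mathcal G$-equivariant surjection from a permutation lattice $\mathcal L$ (spanned by a $\mathcal G$-stable set of $F_{m,h}$-divisors). Because $H^1(\mathcal G,\mathcal L)=0$ for a permutation lattice, it then suffices to show that $P$ is a $\mathcal G$-module direct summand of $\mathcal L$ (equivalently, that $P$ is permutation-projective), since $H^1$ of a direct summand of a permutation lattice vanishes. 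Establishing this splitting — or, failing that, a direct verification that every class in $\scC_N^{\mathcal G}$ admits a Galois-invariant representative divisor — is the computational heart of the argument.

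The main obstacle is controlling the $\mathcal G$-module structure of $P$, equivalently the relation lattice among the $F_{m,h}$, for general $N$. For $N=p^2M$ the generators and relations are explicit enough that $\mathcal L$, the relations, and the permutation structure can be written down and $H^1(\mathcal G,P)=0$ verified, which is exactly the route to the main theorem. For $N$ divisible by $p^3$ or higher the combinatorics of the admissible products of $F_{m,h}$, and of the roots of unity occurring in their leading coefficients, becomes intractable, and it is no longer clear that $P$ remains a summand of a permutation module; this is precisely where a genuinely new input is required. A complementary line of attack, useful as a consistency check and occasionally as a substitute, is to prove the equality of orders $|\scC(N)|=|\scC_N(\Q)|$ directly: the left-hand side is computable as a cuspidal class number from the matrix of orders of the $F_{m,h}$ at the cusps, and the difficulty there is the lack of a closed formula for $|\scC_N(\Q)|$ valid for all $N$.
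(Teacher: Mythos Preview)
The statement is an open conjecture; the paper establishes only the special case $N=p^2M$ with $M$ squarefree, and you yourself acknowledge that your outline does not close for general $N$. So what you have written is a reformulation plus a programme, not a proof.

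The reformulation $\scC_N(\Q)/\scC(N)\cong H^1(\mathcal G,P)$ is correct and is a clean way to state the problem. The next step, however, is not set up correctly: the individual $F_{m,h}$ are not modular units on $X_0(N)$ (only certain powers and products are), so their divisors do not lie in $P$ and there is no $\mathcal G$-equivariant surjection from your permutation lattice $\mathcal L$ onto $P$. Rather, $P$ sits as a finite-index sublattice inside the $\Z$-span $\Lambda$ of the $\tn{div}(F_{m,h})$, and it is precisely this index --- encoded by the congruence conditions in the modular-unit criterion --- that carries the cohomology. Thus ``$P$ is permutation-projective'' is not a formality: verifying it requires exactly the explicit control of the relation lattice that the paper obtains only under its restrictive hypotheses (odd squarefree $L$ with $(L,\p(N/L))=1$), and your proposal offers no mechanism beyond that.

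For comparison, the paper's argument in the case $N=p^2M$ is not cohomological. It works one prime $q$ at a time: an averaging over $\Gal(\Q(\z_L)/\Q)$ handles the $q$-primary part whenever $q\nmid\p(L)=p-1$, and for the remaining $q$ (necessarily $q\neq p$) one writes $q^rD=\tn{div}(f)$ in the $F_{m,h}$-basis, uses the Galois relation $\s_s(\tn{div}(F_{m,h}))=\tn{div}(F_{m,sh})$ together with uniqueness of that expansion to force $q^r\mid e_{m,h}$ for every nonzero $h$, and then exhibits by hand a modular unit $F$ --- a product of the functions $(F_{m,h}/F_{m,0})^{np}$ --- such that $npD-\tn{div}(F)$ is a rational divisor. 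This is exactly the ``direct verification that every class admits a Galois-invariant representative'' that you list as a fallback, carried out with the explicit criteria available when $L$ is prime; the obstruction you flag for higher prime powers is genuine and is why the paper stops where it does.
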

Note that since the structure of $\scC(N)$ is completely determined by the third author \cite{Yoo}, the computation of $J_0(N)(\Q)_\tor$ is reduced to proving the conjectures above. (We only consider subgroups of the cuspidal subgroup of $J_0(N)$ here. 
Note that subgroups of the cuspidal subgroup of $J_1(N)$, the Jacobian variety of the modular curve $X_1(N)$, have been also studied by many mathematicians. For instance, see \cite{Ha98, Su10, Ta92, Ta93, Ta12, Ta14, Ya09, Yu80}.)

\ms
In this paper, we are primarily concerned with Conjecture \ref{conjecture}. (For Conjecture \ref{conjecture: GOC}, See \cite{Yoo1}.) To the best knowledge of the authors, Conjecture \ref{conjecture} is only known when $N$ is one of the following cases:
\begin{enumerate}
\item
$N$ is small enough.
\item
$N=2^r M$ with $0\leq r \leq 3$ and $M$ odd squarefree. 
\item
$N=n^2 M$ with $n \dd 24$ and $M$ squarefree. 
\end{enumerate}
The second case is obvious as all the cusps of $X_0(N)$ are defined over $\Q$, and so $\scC(N)=\scC_N(\Q)=\scC_N$.  The third case is the main result of \cite{WY20}. Our main theorem is the following.

\begin{thm}\label{theorem: main theorem}
Let $N=p^2M$ for a prime $p$ and a squarefree integer $M$. Then we have
\begin{equation*}
\scC_{N}(\Q)=\scC(N).
\end{equation*}
\end{thm}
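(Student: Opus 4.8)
The strategy is to recast the problem in Galois cohomology. Let $G=\Gal(\Q(\zeta_N)/\Q)$; since the cusps of $X_0(N)$ are defined over $\Q(\zeta_N)$ and are permuted by Galois, the action on cuspidal divisors factors through $G$. Write $V$ for the free abelian group on the cusps, $V^{0}\subseteq V$ for its degree-zero subgroup, and $\cP\subseteq V^{0}$ for the group of principal cuspidal divisors, i.e. the divisors $\mathrm{div}(u)$ of modular units $u$. By the theorem of Manin and Drinfeld $\scC_N=V^{0}/\cP$, while by definition $\scC(N)$ is the image of $(V^{0})^{G}$ in $\scC_N$ and $\scC_N(\Q)=\scC_N^{G}$. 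Now $V$ is a permutation $G$-module, so $H^{1}(G,V)=0$ by Shapiro's lemma; because $\infty$ is a rational cusp, the degree map $V^{G}\to\Z$ is surjective, and the sequence $0\to V^{0}\to V\to\Z\to0$ then gives $H^{1}(G,V^{0})=0$. Feeding $0\to\cP\to V^{0}\to\scC_N\to0$ into the long exact cohomology sequence yields a canonical isomorphism
\begin{equation*}
\scC_N(\Q)/\scC(N)\;\cong\;H^{1}(G,\cP).
\end{equation*}
Thus the theorem is equivalent to the vanishing $H^{1}(G,\cP)=0$; concretely, I must show that every Galois-stable cuspidal divisor class admits a Galois-stable representative divisor.

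The next step is to make $\cP$ explicit as a $G$-module, and this is where the modular-unit input enters. Using the description of modular units developed in this paper—every modular unit on $X_0(N)$ is, up to a constant, a product of the functions $F_{m,h}$ built from generalized Dedekind eta functions—the divisors $\mathrm{div}(F_{m,h})$ generate $\cP$. Since each $\sigma\in G$ carries $F_{m,h}$ to another such function up to a root of unity, $G$ permutes the divisors $\mathrm{div}(F_{m,h})$; hence $\cP$ is the image of a permutation $G$-module under a $G$-equivariant map, and the necessary and sufficient conditions determining which products are modular units cut out $\cP$ as an explicit, $G$-stable sublattice of $V^{0}$. The arithmetic of the eta quotients is thereby reduced to combinatorics in the divisors $d\mid N$ and the residues indexing the $F_{m,h}$.

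To finish I would compute $H^{1}(G,\cP)$ by localizing at $p$. For $N=p^{2}M$ with $M$ squarefree, a cusp of denominator $d=p^{i}e$ ($e\mid M$) satisfies $\gcd(d,N/d)=p^{\min(i,2-i)}$, so the cusps with $i=0$ and $i=2$ are rational while, for each $e\mid M$, the $p-1$ cusps with $i=1$ form a single free orbit under $(\Z/p\Z)^{\times}$, on which the whole Galois action is concentrated. Consequently $V=T\oplus W$ with $T$ a trivial $G$-module (the rational cusps) and $W\cong\bigoplus_{e\mid M}\Z[(\Z/p\Z)^{\times}]$ a free module over the cyclic group ring. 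This pins the entire obstruction to the denominator-$pe$ blocks: the trivial part contributes nothing to $H^{1}$, and it remains to analyze how $\cP$ sits inside $W$. Since $(\Z/p\Z)^{\times}$ is cyclic, I would compute the relevant $H^{1}$ as Tate cohomology, verifying that on $\cP$ the kernel of the norm equals the image of $\sigma-1$; equivalently, I would exhibit from the family $\{F_{m,h}\}$ a Galois-permuted $\Z$-basis of $\cP$, realizing it as a permutation module, whence $H^{1}(G,\cP)=0$ and the theorem follows.

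I expect the decisive obstacle to be precisely this last point: the square factor $p^{2}$ is exactly what produces the non-rational, denominator-$pe$ cusps, and one must show that the congruence conditions singling out the genuine modular units among the eta quotients are compatible with the free permutation action on these cusps, so that the relation lattice they define carries no cohomology at $p$. Pinning down the exact shape of these conditions at $p$—which is where the present case departs from the squarefree and $n\mid 24$ settings treated in \cite{Yoo, WY20}—is the technical heart of the argument; once $\cP$ is shown to be permutation-like in the $p$-primary blocks, the cohomological vanishing, and with it the equality $\scC_N(\Q)=\scC(N)$, is immediate.
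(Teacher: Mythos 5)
Your cohomological reformulation is correct: with $V$ the permutation module on the cusps, $H^1(G,V)=0$, the degree map is surjective on $G$-invariants because $\infty$ is rational, so $H^1(G,V^0)=0$ and indeed $\scC_N(\Q)/\scC(N)\cong H^1(G,\cP)$; your observation that the prime-to-$\varphi(p)$ part vanishes for free is the same averaging as Lemma \ref{lemma: averaging case}. The problem is that your proof stops exactly where the theorem begins: the vanishing $H^1(G,\cP)=0$ is \emph{equivalent} to the statement being proved, and the route you sketch for it --- exhibiting a Galois-permuted $\Z$-basis of $\cP$ from the family $\{F_{m,h}\}$, so that $\cP$ is a permutation module --- is neither carried out nor plausible as stated. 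First, the individual divisors $\tn{div}(F_{m,h})$ do not lie in $\cP$: the $F_{m,h}$ are not modular functions on $X_0(N)$ (Lemma \ref{lemma: F is modular} only makes a high power modular, and by Corollary \ref{corollary: order at special cusps} their cuspidal orders are typically non-integral), so $\cP$ is the proper sublattice of $\sum_{m,h}\Z\,\tn{div}(F_{m,h})$ cut out by the integrality and congruence conditions of Theorem \ref{theorem: modular unit iff}. Second, the natural Galois-stable generating family $\{\tn{div}(F_{m,h}): m\in\mathcal{D}_N,\ 0\le h\le \ell(m)-1\}$ is linearly \emph{dependent}: by Theorem \ref{theorem: theorem 1} the subfamily with $0\le h\le\varphi(\ell(m))-1$ already has full rank (the number of cusps minus one), so no Galois-permuted basis can be extracted from it, and any basis one does extract breaks the symmetry. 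Third, ``permutation module'' is far stronger than $H^1=0$, so even the target of your final step is stronger than necessary and unjustified.

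What actually closes this gap in the paper is an explicit divisibility-and-correction argument that has no counterpart in your proposal. Given $[D]\in\scC_N(\Q)[q^\infty]$ of order $q^r$ (only $q\mid p-1$ remains after averaging), write $q^rD=\tn{div}(f)$ with $f=\prod F_{m,h}^{e_{m,h}}$ by Theorem \ref{theorem: theorem 1}; applying $\s_s$ and using Lemma \ref{lemma: galois action} together with the \emph{uniqueness} statements of Theorems \ref{theorem: theorem 1} and \ref{theorem: variant of theorem 1} (applied to the principal divisors $\s_s(q^rD)-q^rD$) forces $q^r\mid e_{m,h}$ for all $h\neq 0$; then Corollary \ref{corollary: modular function Gmh}, which rests on Theorem \ref{theorem: modular unit suff}, shows that $F=\prod_{m}\prod_{h\ge 1}(F_{m,h}/F_{m,0})^{npq^{-r}e_{m,h}}$ is a modular function, and $D'=npD-\tn{div}(F)$ is a rational cuspidal divisor by Corollary \ref{corollary: rationality}, with $D\sim kD'$ for suitable $k$. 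This is precisely the content you label ``the technical heart'' and defer; note also the paper's own warning that the mismatch between the ranges of $h$ in Theorem \ref{theorem: theorem 1} and Theorems \ref{theorem: modular unit iff}--\ref{theorem: modular unit suff} is a genuine obstacle, overcome here only because $L=p$ is prime --- strong evidence that the $G$-module structure of $\cP$ is not as clean as your sketch assumes. Without supplying this step (or an honest proof that $\cP$ has vanishing $H^1$), your argument establishes only a reformulation of the theorem, not the theorem.
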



Note that the case where $M$ is divisible by $p$ is permitted. Here we remark that the proof of Conjecture \ref{conjecture} in the
case $N=n^2M$ with $n \dd 24$ and $M$ squarefree relies crucially on the
facts that all modular units (i.e., modular functions with divisors
supported on cusps) on $X_0(N)$ can be expressed as products of eta
functions of the form $\eta(\tau+k/h)$ with $h\dd 24$, and one can completely
determine the necessary and sufficient conditions for such an
eta product to be a modular unit on $X_0(N)$. However, as pointed
out in \cite{WY20}, this construction and characterization of modular
units work only when the level $N$ is of the special form $N=n^2M$
with $n\dd 24$ and $M$ squarefree. (This is related to the fact that if
$n$ is a positive integer such that $a^2\equiv1\mod n$ for all $a$
with $(a,n)=1$, i.e., if $(\Z/n\Z)^\times$ is an elementary
$2$-group, then $n$ divides $24$.) For general levels, we will need a very
different method for constructing modular units. It turns out that it
is possible to construct a family of functions using the Siegel
functions such that every modular unit on $X_0(N)$ can be uniquely
expressed as a product of functions from the family (see Theorem
\ref{theorem: theorem 1} below). However, it appears rather difficult
to determine the necessary and sufficient conditions for a product of
these functions to be modular on $X_0(N)$ and we are able to overcome
the difficulty only for a special type of $N$
(see Theorem \ref{theorem: modular unit iff} below).
This is the primary reason why we restrict our attention to this
special form of $N$ in Theorem \ref{theorem: main theorem}. 

If $[D] \in \scC_N(\Q)$, i.e., $D$ is a cuspidal divisor of degree $0$ satisfying $\sigma(D)\sim D$ for all $\sigma\in\Gal(\overline\Q/\Q)$, then using Theorem \ref{theorem: modular unit iff} we can now construct a modular unit $f$ such that $D+\operatorname{div}f$ is rational and thereby establish Theorem \ref{theorem: main theorem}. For more details, see Section \ref{section4}.

Before proceeding, we fix some notation.

\begin{notation}\label{nota}
\begin{itemize}
\item
Let $N$ be a positive integer and let $p$ denote a prime (unless otherwise mentioned). For example, we write $\prod_{p \mid n}$ to mean that the product is over the prime divisors of $n$.
\item
For a positive divisor $m$ of $N$, let $\ell(m)$ be the largest integer whose square divides $N/m$. Also, let
\begin{equation*}
m':=\frac{N}{m},\quad m'':=\frac{m'}{\ell(m)}=\frac{N}{m\ell(m)} \qa N':=\frac{N}{\ell(m)}.
\end{equation*}
If there is no confusion, we simply denote $\ell(m)$ by $\ell$.

\item
Let $L=\ell(1)$, i.e., $L$ is the largest integer whose square divides $N$.

\item
Let $\mathcal{D}_N$ be the set of all positive divisors of $N$ different from $N$. 

\item
For a positive integer $n$, let $\z_n:=e^{2 \pi i/n}$. 

\item
For any integer $s$ prime to $L$, let $\s_s$ be the element of $\Gal(\Q(\z_L)/\Q)$ such that $\s_s(\z_L)=\z_L^s$.

\item
For a meromorphic function $F$ on a Riemann surface $X$, we denote by $\tn{div} (F)$ the divisor of a function $F$, namely,
\begin{equation*}
\tn{div}(F):=\sum_{P \in X}\tn{ord}_P(F) \cdot P
\end{equation*}
where $\tn{ord}_P(F)$ is the order of vanishing of $F$ at a point $P$ in $X$.
\end{itemize} 
\end{notation}

Recall that a cusp of $X_0(N)$ can be represented as $\vect a c$, where $c$ is a positive divisor of $N$ and $a$ is an integer prime to $N$.\footnote{In most literatures, one takes an integer $a$ satisfying $(a, c)=1$. However, since the natural map $(\zmod N)^\times \to (\zmod c)^\times$ is surjective, we can always take $b$ such that $(b, N)=1$ and $b \equiv a \pmod c$. Then two cusps $\vect a c$ and $\vect b c$ are equivalent.} For example, the cusps $\infty$ and $0$ are written as $\vect 1 N$ and $\vect 1 1$, respectively.
Such a cusp $\vect a c$ of $X_0(N)$ is called a \textit{cusp of level $c$}.
Two cusps $\vect a c$ and $\vect {a'}{c'}$ (with $1\leq c, c' \dd N$ and $(aa', N)=1$) are equivalent if and only if $c=c'$ and $a \equiv a' \pmod z$, where $z=(c, N/c)$. So the number of all cusps of level $c$ is $\p(z)$, where $\p$ is Euler's totient function, and the number of all cusps of $X_0(N)$ is $\sum_{1\leq c \mid N} \p(\gcd(c, N/c))$. 
A cusp of level $c$ is defined over $\Q(\z_z)$ and so all cusps of $X_0(N)$ are defined over $\Q(\z_L)$. Moreover, for any $\s_s \in \Gal(\Q(\z_L)/\Q)$ we have 
\begin{equation}\label{equation: cusp Galois action}
\s_s \vect a c = \vect {s^* a}{c},
\end{equation}
where $s^* \in \Z$ is chosen so that $ss^* \equiv 1 \pmod L$ and $(s^*, aN)=1$. For more details, see \cite[Sec. 2]{Yoo}.

\ms
Let $D$ be a cuspidal divisor of degree $0$ on $X_0(N)$ such that $[D] \in \scC_N(\Q)$. (Here, $[D]$ denotes the linear equivalence class of a divisor $D$.)
To prove Conjecture \ref{conjecture}, we must find a rational cuspidal divisor $D'$ which is linearly equivalent to $D$, i.e., $[D]=[D']$. In other words, it is necessary to find a modular function $F$ on $X_0(N)$ such that $D+\tn{div} (F)$ is a rational cuspidal divisor. 
Thus, it is desirable to have a nice description of such a function $F$, which is called a \textit{modular unit} on $X_0(N)$. Since a modular unit on $X_0(N)$ is also that on $X(N)$, it can be written as a product of Siegel functions (cf. \cite{KL}). In this paper, we use generalized Dedekind eta functions which are variants of Siegel functions.
\begin{defn}
For integers $g$ and $h$ not both congruent to $0$ modulo $N$, let
\begin{equation*}
E_{g, h}(\tau):=q^{B_2(g/N)/2} \prod_{n=1}^\infty \left(1-\z_N^h q^{n-1+g/N}\right)\left(1-\z_N^{-h}q^{n-g/N}\right),
\end{equation*}
where $q=e^{2\pi i \tau}$ and $B_2(x)=x^2-x+1/6$ is the second Bernoulli polynomial.
\end{defn}

Motivated by the previous work of Wang and the second author \cite{WY20}, we construct the following.

\begin{defn}\label{definition: Fmh}
For each $m\in \mathcal{D}_N$, we fix a set $S_{m''} \subset \{1, \dots, m''-1\}$ of representatives of $(\zmod {m''})^\times /\{\pm 1\}$. For each $\a \in S_{m''}$, let $\d \in \{ 1, \dots, m''-1 \}$ be an integer such that $\a \d \equiv 1 \pmod {m''}$. 
If $m''\neq 2$, we set
\begin{equation*}
F_{m, h}(\tau):=\prod_{\a \in S_{m''}} E_{\a m \ell, \d h N'}(N'\tau).
\end{equation*}
Also, if $m''=2$, we define
\begin{equation*}
F_{m, h}(\tau):=E_{m\ell, hN'}(N'\tau)^{1/2}.
\end{equation*}
\end{defn}

Using these functions, we can produce all modular units on $X_0(N)$.
\begin{thm}\label{theorem: theorem 1}
Every modular unit on $X_0(N)$ can be uniquely expressed as 
\begin{equation}\label{equation: 101}
\e\prod_{m \in \mathcal{D}_N} \prod_{h=0}^{\p(\ell(m))-1} F_{m, h}^{e_{m, h}} ~~~ \text{ for some } ~e_{m, h} \in \Z \qqa \e \in \C^\times.
\end{equation}
\end{thm}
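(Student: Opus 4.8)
The plan is to realize the group of modular units on $X_0(N)$ modulo $\C^\times$ as a full-rank lattice and to exhibit the $F_{m,h}$ as a $\Z$-basis of it. I identify a modular unit with its class modulo $\C^\times$ and use that $\operatorname{div}$ embeds this group into the degree-$0$ cuspidal divisors with finite-index image (Manin--Drinfeld); hence the group of modular units on $X_0(N)$ is free abelian of rank $\rho := (\text{number of cusps of }X_0(N)) - 1$. Since a modular unit on $X_0(N)$ is one on $X(N)$, it is, up to a constant, a product of Siegel functions (Kubert--Lang), and $E_{g,h}$ differs from the Siegel function of index $(g/N,h/N)$ only by a root of unity and a power of $q$. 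I would first record the explicit formula for $\operatorname{ord}_{\vect a c}(E_{g,h})$, hence for $\operatorname{ord}_{\vect a c}(F_{m,h})$, in terms of the second Bernoulli polynomial $B_2$. Because $\Gamma_0(N)$ permutes the Siegel indices only up to roots of unity, each $F_{m,h}$, being the product over the representative set $S_{m''}$ of Definition \ref{definition: Fmh}, is $\Gamma_0(N)$-invariant up to a character; in particular its order at every cusp of $X_0(N)$ is well defined, so $\operatorname{div} F_{m,h}$ is a well-defined degree-$0$ cuspidal divisor.

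For uniqueness I would prove that the divisors $\operatorname{div} F_{m,h}$ are linearly independent. Using the Bernoulli-polynomial order formula, I would order the pairs $(m,h)$ and the cusps so that the matrix $\bigl(\operatorname{ord}_{\vect a c} F_{m,h}\bigr)$ is triangular with nonzero diagonal, which forces full rank and hence multiplicative independence of the $F_{m,h}$ modulo $\C^\times$. A counting step shows this rank is maximal: by the identity $\sum_{m\mid N}\varphi(\ell(m)) = \sum_{c\mid N}\varphi(\gcd(c,N/c))$ (both sides multiplicative and equal on prime powers), the number of pairs $(m,h)$, namely $\sum_{m\in\mathcal{D}_N}\varphi(\ell(m))$, equals the number of cusps minus one, i.e. exactly $\rho$.

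It remains to prove existence with integer exponents, which is the crux. Given a modular unit $f$ on $X_0(N)$, independence together with the rank count lets me write $\operatorname{div} f = \sum_{m,h} e_{m,h}\,\operatorname{div} F_{m,h}$ with $e_{m,h}\in\Q$, and then $f=\e\prod F_{m,h}^{e_{m,h}}$ for some $\e\in\C^\times$; the real content is that each $e_{m,h}$ is an integer. I would establish this by analyzing the $\Gamma_0(N)$-orbit structure of the Siegel indices: the $\Gamma_0(N)$-invariance of $f$ forces its exponent vector to be constant on orbits, and the normalizations built into $F_{m,h}$ (the passage to $S_{m''}$ modulo $\pm1$, the inverse $\delta$, the rescaling $\tau\mapsto N'\tau$, and the half-power in the degenerate case $m''=2$) are arranged so that each orbit contributes an integer multiple of a single $F_{m,h}$. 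Equivalently, I would show that the change of coordinates between the integral lattice of $\Gamma_0(N)$-invariant generalized eta products and $\bigoplus_{m,h}\Z\cdot F_{m,h}$ is unimodular. This integrality is the main obstacle: the triangularity and the counting yield the statement over $\Q$ almost formally, but proving that the $F_{m,h}$ generate the full lattice of modular units over $\Z$, rather than a finite-index sublattice, requires the delicate bookkeeping of orbit stabilizers and the precise Siegel-function normalizations, and is exactly where the special form of the family $F_{m,h}$ is essential.
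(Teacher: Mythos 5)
The step you yourself flag as the crux --- that the $F_{m,h}$ generate the \emph{full} lattice of modular units over $\Z$, not just a finite-index sublattice --- is precisely what your proposal leaves open, and the sketch you give for it would not work as stated. First, at composite level the Siegel functions satisfy distribution relations, so a modular unit on $X_0(N)$ does not have a well-defined exponent vector as a Siegel product; consequently $\Gamma_0(N)$-invariance does not force ``constancy on orbits'' of any exponent vector. Second, the functions $E_{\a m\ell,\,\d hN'}(N'\tau)$ entering $F_{m,h}$ involve rescalings $\tau\mapsto N'\tau$ with $N'=N/\ell(m)$ \emph{varying with} $m$, so they do not sit inside a single Kubert--Lang basis to which an orbit argument could be applied; your ``unimodular change of coordinates'' claim is essentially a restatement of the theorem, not a proof of it. (A lesser but real issue: your independence step asserts that the order matrix $\bigl(\operatorname{ord}_{\vect a c}F_{m,h}\bigr)$ can be made triangular with nonzero diagonal; this is a nontrivial non-vanishing claim about sums of values of $P_2$ that you do not establish.)

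The paper closes exactly this gap by a Fourier-coefficient argument rather than orbit combinatorics. It proves four claims: (a) $|\scS(N)|$ equals the number of cusps minus one (your counting step, proved there via an explicit bijection $\iota$ on divisors of $N$ rather than multiplicativity --- both work); (b) the $F_{m,h}$ admit no multiplicative relations, shown by normalizing Fourier expansions so that $F_{m,h}=\e q^A(1-\zeta_{\ell(m)}^h q^m+\cdots)$ and using that $1,\zeta_{\ell(1)},\dots,\zeta_{\ell(1)}^{\varphi(\ell(1))-1}$ are linearly independent over $\Q$, then inducting through the divisors $m$; (c) the subgroup $\scU_0$ generated by the $F_{m,h}$ has finite index in the unit group, because by Lemma \ref{lemma: F is modular} the power $F_{m,h}^{\lcm(2m'',24)}$ is already modular on $X_0(N)$; and (d) $\scU_0$ is saturated: if $g^k\in\scU_0$ then $g\in\scU_0$, proved as in \cite{WY20} by comparing Fourier coefficients and using that $1,\zeta_{\ell(m)},\dots,\zeta_{\ell(m)}^{\varphi(\ell(m))-1}$ form an \emph{integral} basis of $\Z[\zeta_{\ell(m)}]$, together with the fact that the normalized second Fourier coefficient of $F_{m,h}$ is $\zeta_{\ell(m)}^h$. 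Claims (c) and (d) together give generation with integer exponents; this integral-basis mechanism is the missing idea in your proposal, and it simultaneously replaces your triangular-matrix argument for uniqueness.
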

A variant of the theorem is the following.

\begin{thm}\label{theorem: variant of theorem 1}
Every modular unit on $X_0(N)$ can be uniquely expressed as 
\begin{equation*}
\e\prod_{m \in \mathcal{D}_N}  \prod_{h=1}^{\p(\ell(m))} F_{m, h}^{e_{m, h}} ~~~ \text{ for some } ~e_{m, h} \in \Z  \qqa \e \in \C^\times.
\end{equation*}
\end{thm}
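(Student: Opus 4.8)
The plan is to deduce Theorem \ref{theorem: variant of theorem 1} directly from Theorem \ref{theorem: theorem 1}. Write $U$ for the group of modular units on $X_0(N)$ modulo $\C^\times$. Theorem \ref{theorem: theorem 1} asserts that $U$ is free abelian with $\Z$-basis
\begin{equation*}
\mathcal B_0 = \{\, F_{m,h} : m \in \mathcal{D}_N,\ 0 \le h \le \varphi(\ell(m))-1 \,\}, \qquad \mathcal B_1 = \{\, F_{m,h} : m \in \mathcal{D}_N,\ 1 \le h \le \varphi(\ell(m)) \,\},
\end{equation*}
and our target is that $\mathcal B_1$ is also a $\Z$-basis. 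Since $\mathcal B_1 \subseteq U$ has the same cardinality $\sum_{m\in\mathcal{D}_N}\varphi(\ell(m))$ as the basis $\mathcal B_0$, and $U$ is free of that rank, it suffices to show $\mathcal B_1$ \emph{generates} $U$; uniqueness of the expression in Theorem \ref{theorem: variant of theorem 1} then follows formally. Because the two windows of $h$ agree on $\{1,\dots,\varphi(\ell(m))-1\}$, this reduces to the single task of expressing each $F_{m,0}$ as a product of members of $\mathcal B_1$ times a constant, or equivalently to showing that the transition between $\mathcal B_0$ and $\mathcal B_1$ is realized by a matrix in $\GL(\Z)$.

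The organizing principle I would use is cyclotomic. In Definition \ref{definition: Fmh} the index $h$ enters $F_{m,h}$ only through the second subscripts $\delta h N'$, and since $N'=N/\ell(m)$ one has $\z_N^{\delta h N'}=\z_{\ell(m)}^{\delta h}$; thus $F_{m,h}$ depends on $h$ only modulo $\ell(m)$, and the whole $h$-dependence is governed by the characters $h\mapsto \z_{\ell(m)}^{\delta h}$ of $\Z/\ell(m)\Z$. Under $h\mapsto \z_{\ell(m)}^{h}$, the windows $\{0,\dots,\varphi(\ell(m))-1\}$ and $\{1,\dots,\varphi(\ell(m))\}$ correspond to the power basis $\{1,\z_{\ell(m)},\dots,\z_{\ell(m)}^{\varphi(\ell(m))-1}\}$ of $\Z[\z_{\ell(m)}]$ and to its image under multiplication by the \emph{unit} $\z_{\ell(m)}$. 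As multiplication by a root of unity is a $\Z$-module automorphism of $\Z[\z_{\ell(m)}]$, both windows are $\Z$-bases of $\Z[\z_{\ell(m)}]$ related by a determinant $\pm1$ change of basis (transported, when necessary, through the automorphisms $\z\mapsto\z^{\delta}$ arising from the product over $\alpha\in S_{m''}$, which again preserve $\Z$-bases). This is the structural reason I expect both windows to be admissible.

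To make this rigorous I would transfer the cyclotomic unimodularity to the actual integral data underlying Theorem \ref{theorem: theorem 1}, namely the matrix $M_0$ of orders of vanishing of the $F_{m,h}$ at the cusps of $X_0(N)$, and prove that the analogous matrix $M_1$ for the shifted window spans the same lattice, i.e.\ $M_1 = T M_0$ with $T\in\GL(\Z)$. The cleanest route is not to guess explicit multiplicative relations among the $F_{m,h}$ but to re-run the independence/spanning computation of Theorem \ref{theorem: theorem 1} verbatim with $\{1,\dots,\varphi(\ell(m))\}$ in place of $\{0,\dots,\varphi(\ell(m))-1\}$, and to check that the relevant determinant is unchanged up to sign. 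The point at which the unit $\z_{\ell(m)}$ reappears is precisely the evaluation of that determinant, where the sawtooth/$B_2$-data attached to each cusp reorganizes into cyclotomic (Dedekind-sum type) factors invariant under the window shift.

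The hard part will be this last transfer, for two reasons: the order of $F_{m,h}$ at a cusp depends on $h$ through $B_2$ of a fractional part, a piecewise-quadratic rather than a linear function of $\z_{\ell(m)}^{h}$, so the cyclotomic picture is only a heuristic at the level of the raw orders; and the relations among the $F_{m,h}$ do \emph{not} respect the decomposition by $m$ (a relation in the $m$-block generically involves functions attached to other divisors $m'$, as one already sees for small $\ell(m)$). Consequently the comparison $M_1=TM_0$ must be established globally rather than block by block. I expect the main effort of the proof to go into showing that passing to the shifted window merely recombines the generalized Dedekind eta data $E_{g,h}$ unimodularly and leaves the spanned lattice intact — the same linear-algebra core that powers Theorem \ref{theorem: theorem 1}, now read off a translated range of $h$.
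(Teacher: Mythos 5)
Your plan rests on the right fact --- $\{\z_{\ell(m)},\dots,\z_{\ell(m)}^{\p(\ell(m))}\}$ is again an integral basis of $\Z[\z_{\ell(m)}]$, being the image of the power basis under multiplication by the unit $\z_{\ell(m)}$ --- and that is precisely the fact the paper's proof invokes. But two things separate your write-up from a proof. First, a framing error: Theorem \ref{theorem: theorem 1} does \emph{not} say that $U$ is free abelian with basis $\mathcal{B}_0$, and $\mathcal{B}_1\not\subseteq U$; the individual functions $F_{m,h}$ are in general not modular on $X_0(N)$ (by Lemma \ref{lemma: F is modular} they transform by nontrivial roots of unity, and only suitable powers or products of them are modular). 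What Theorem \ref{theorem: theorem 1} actually gives is an embedding of $U$ as a full-rank subgroup of the free abelian group on the symbols $F_{m,h}$, $(m,h)\in\mathcal{B}_0$. Your reduction survives this correction --- if each $F_{m,0}$ is, up to a constant, an integral product of members of $\mathcal{B}_1$, then existence follows by substitution and uniqueness by the rank count --- but it must be restated in these terms.

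Second, and essentially: that crucial step (your $M_1=TM_0$ with $T\in\GL(\Z)$) is never proved; you explicitly defer it as ``the hard part'' and ``the main effort of the proof.'' Moreover, the mechanism you propose for it --- a unimodularity comparison of the matrices of orders of vanishing at the cusps --- is the wrong place to apply the cyclotomic fact, for exactly the reason you yourself notice: the orders are $B_2$-of-fractional-part data, not linear in $\z_{\ell(m)}^{h}$, and multiplicative relations mix different $m$'s. The paper instead reruns the four claims (a)--(d) of the proof of Theorem \ref{theorem: theorem 1} with the shifted window, and the window enters only through the $q$-expansion at $\infty$, whose second coefficient (after normalizing the leading coefficient to $1$) is exactly $\z_{\ell(m)}^{h}$ --- linear in the root of unity. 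There, independence (claim (b)) needs only the $\Q$-linear independence of $\z_{\ell(m)},\dots,\z_{\ell(m)}^{\p(\ell(m))}$, and saturation (claim (d), that $g^{k}\in\scU_0$ forces $g\in\scU_0$) needs that these powers form an integral basis. So the viable version of your plan is the one sentence you half-commit to --- rerun the proof of Theorem \ref{theorem: theorem 1} verbatim with $h\in\{1,\dots,\p(\ell(m))\}$ --- carried out on Fourier coefficients; no determinant of cusp-order matrices needs to be computed, and trying to compute one would leave you exactly where your last paragraph stalls.
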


Unfortunately, these results do not answer when such products are indeed modular units on $X_0(N)$. A partial result is the following, which is a generalization of the well-known criteria of Ligozat \cite[Prop. 3.2.1]{Li75}. 
\begin{thm}\label{theorem: modular unit iff}
Suppose that $L$ is odd and squarefree. If $(L, \p(N/L))=1$, then\footnote{By Remark \ref{remark: range h}, this product is the most general one constructed from $F_{m, h}$.} 
\begin{equation*}
f=\prod_{m \in \mathcal{D}_N} \prod_{h=0}^{\ell(m)-1} F_{m, h}^{e_{m, h}} ~~\text{ for some } ~e_{m, h} \in \Z
\end{equation*}
is a modular function on $X_0(N)$ if and only if all the following conditions are satisfied:
\begin{enumerate}
\item
the order of $f$ at the cusp $\infty$ is an integer;
\item
the order of $f$ at the cusp $0$ is an integer;
\item 
the order of $f$ at the cusp $\vect 1 {N_0}$ is an integer, where $N_0$ is the odd part of $N$;
\item (the mod $L$ condition) we have
\begin{equation*}
\sum_{m \in \mathcal{D}_N} m \p(m'') \sum_{h=1}^{\ell(m)-1} h e_{m, h} \equiv 0 \pmod L;
\end{equation*}
\item (the mod $2$ condition)
for each odd prime divisor $p$ of $N$,  we have
\begin{equation*}
\sum_{m: m'' = p^r} \sum_{h=0}^{\ell(m)-1} e_{m, h} \equiv 0 \pmod 2,
\end{equation*}
where the first sum runs over all $m \in \mathcal{D}_N$ such that $m''$ is a power of $p$.
\end{enumerate}
\end{thm}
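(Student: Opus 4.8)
The plan is to characterize modularity of $f$ on $X_0(N)$ by the standard two-part criterion: the product $f$ is already a modular function on $X(N)$ (being a product of generalized Dedekind eta functions, which are products of Siegel functions up to roots of unity), so $f$ is modular on $X_0(N)$ precisely when (a) $f$ is invariant under the action of $\G_0(N)$ modulo constants, and (b) all orders of vanishing of $f$ at the cusps are integers. Since a modular unit is determined by its divisor up to a constant, and the cusps of $X_0(N)$ are permuted by $\Gal(\Q(\z_L)/\Q)$ via \eqref{equation: cusp Galois action}, the plan is to reduce the full set of cusp-order conditions to the finite list (1)--(5) by exploiting symmetry. First I would record explicit formulas for $\tn{ord}_{\vect a c}(F_{m,h})$ at every cusp; this is a bookkeeping computation starting from the known order of $E_{g,h}$ (a Bernoulli-polynomial expression in the width and the residues of $g,h$) and summing over $\a\in S_{m''}$, tracking the dependence on both the level $c$ and the numerator $a$.

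The heart of the argument is to separate the integrality conditions into an ``archimedean-shape'' part and a congruence part. The order $\tn{ord}_{\vect a c}(f)$ will split as a rational-coefficient linear form in the $e_{m,h}$ whose \emph{denominator} comes from a factor of the form $1/24$ (the Bernoulli/width contribution) together with a factor involving $\z_L$-roots of unity encoded through the dependence on $a$. Conditions (1)--(3) pin down integrality at three distinguished cusps (of levels $N$, $1$, and $N_0$), which under the hypotheses that $L$ is odd and squarefree will control the ``$12$-part'' and the behaviour at primes dividing $N$ to even order. The remaining cusps are handled by Galois-translation: using \eqref{equation: cusp Galois action}, if the order at one cusp of level $c$ is an integer and $f$ is $\G_0(N)$-invariant, then the orders at all Galois conjugates differ by the linear forms appearing in (4) and (5). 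Thus I would show that, modulo the integrality already forced at $\infty$, $0$, and $\vect 1{N_0}$, the full system of integrality-at-all-cusps conditions collapses to the single mod~$L$ condition (4) together with the mod~$2$ conditions (5), one per odd prime $p\dd N$.

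The main obstacle will be condition (4), the mod~$L$ condition, and its interaction with the coprimality hypothesis $(L,\p(N/L))=1$. The subtlety is that $L$ being odd and squarefree makes $(\zmod L)^\times$ a product of cyclic groups of \emph{even} order $p-1$, so the Galois action on cusps of a given level mixes the numerators $a$ nontrivially; summing the cusp orders over a Galois orbit must produce a clean congruence rather than a condition depending on the choice of representatives $S_{m''}$ or of the inverse $\d$. Verifying that the $a$-dependence aggregates exactly into the weighted sum $\sum_m m\,\p(m'')\sum_h h\,e_{m,h} \bmod L$ — and in particular that the weights $m\,\p(m'')$ are forced and that the coprimality condition guarantees the relevant linear map on $(\zmod L)$ is surjective, so no spurious congruences survive — is where the real work lies. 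I would isolate the contribution of each prime $p\dd L$ separately (legitimate since $L$ is squarefree, so $(\zmod L)^\times \cong \prod_{p\dd L}(\zmod p)^\times$ by CRT), reducing (4) to a collection of mod-$p$ statements that can be checked by a Gauss-sum / character-orthogonality computation over $(\zmod p)^\times$.

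Finally, I would verify necessity and sufficiency together: necessity of (1)--(5) follows because each is a specialization of ``all cusp orders are integers,'' while sufficiency requires checking that (1)--(5) imply integrality at \emph{every} cusp, which is exactly the orbit-reduction described above. The hypothesis that $(L,\p(N/L))=1$ should be what forces the congruence conditions to decouple so cleanly across the three ``shape'' conditions and the two families of congruences; I expect this coprimality to enter precisely when showing that integrality at $\infty$, $0$, and $\vect 1{N_0}$ cannot be disturbed by adjusting the $e_{m,h}$ along the kernel of the mod~$L$ and mod~$2$ forms, so that (1)--(5) are genuinely independent and jointly complete.
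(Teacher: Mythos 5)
Your proposal rests on a reduction that is false: for these products, invariance under $\G_0(N)$ up to constants together with integrality of the orders at \emph{all} cusps does \emph{not} imply modularity on $X_0(N)$. Invariance up to constants is automatic (Lemma \ref{lemma: F is modular}), so the whole content of the theorem is the triviality of the resulting finite-order multiplier character $\chi\colon\G_0(N)\to\C^\times$; integrality of cusp orders only says that $\chi$ is trivial on the parabolic stabilizers, and since $\G_0(N)$ is not generated by parabolic (and elliptic) elements once the genus is positive, this is strictly weaker. A classical counterexample already occurs in Ligozat's setting, which the present theorem generalizes: for $p=73$ the quotient $\eta(\tau)/\eta(p\tau)$ has orders $-3$ and $3$ at the two cusps of $X_0(p)$, both integers, yet it transforms under $\G_0(p)$ by the nontrivial quadratic character $\leg{a}{p}$ and is not a modular function; this is exactly what Ligozat's square condition $\prod \d^{r_\d}\in(\Q^\times)^2$, i.e.\ condition (5) here, is designed to exclude. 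Consequently your plan to recover conditions (4) and (5) by ``aggregating integrality over Galois orbits of cusps'' cannot work: in the squarefree case all cusps are rational (orbits are singletons), all cusp orders can be integers, and (5) can still fail. Conditions (4) and (5) are multiplier-triviality conditions, invisible to divisors.

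What is actually needed, and what the paper does, is an explicit computation of the multiplier. Conditions (1) and (2) are used first (Lemma \ref{lemma: lemma 8}) to show that the root of unity in $f(\g\tau)=\epsilon f(\tau)$ depends only on the right coset of $\g$ modulo $G=\left\langle \mat 1101,\mat 10N1\right\rangle$, so one may take representatives with $24\dd b$, $24N\dd c$, $c>0$. For such $\g$, Lemmas \ref{lemma: lemma 9} and \ref{lemma: lemma 10} give $f(\g\tau)=\e_1\e_2\mu f(\tau)$, where $\e_1$ is a sign forced to be $1$ by conditions (1) and (3), $\e_2$ is a product of Legendre symbols $\leg{a}{p}$ which equals $1$ for all $\g$ precisely when the mod $2$ condition (5) holds, and $\mu$ is an $L$-th root of unity. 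The coprimality hypothesis $(L,\p(N/L))=1$ then enters in a way quite different from your guess: Proposition \ref{proposition: proposition 11} shows that conditions (1)--(4) make $f$ modular on the intermediate group $\G=\{\mat abcd\in\G_0(N): a\equiv d\equiv\pm1\pmod{N/L}\}$, and since $\g^{\p(N/L)}\in\G$ one gets $\mu^{\p(N/L)}=1$; as also $\mu^L=1$ and $(L,\p(N/L))=1$, this forces $\mu=1$. Your proposal contains no computation of the transformation behaviour of the $F_{m,h}$ under $\G_0(N)$, which is the irreducible core of the proof; without it, necessity of (4)--(5) might still be argued, but sufficiency is out of reach.
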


With all these results together (and some properties of the functions $F_{m, h}$ which are discussed in Section \ref{section2}), one may try to prove Conjecture \ref{conjecture}. One of the key observations is that the prime-to-$\p(L)$ parts of $\scC_N(\Q)$ and $\scC(N)$ coincide, which can be easily proved by an averaging method (cf. Lemma \ref{lemma: averaging case}). As a result, the following is enough for our purpose.
\begin{thm}\label{theorem: modular unit suff}
Let 
\begin{equation}\label{equation: 102}
f=\prod_{m \in \mathcal{D}_N} \prod_{h=0}^{\ell(m)-1} F_{m, h}^{e_{m, h}} ~~\text{ for some } ~e_{m, h} \in \Z.
\end{equation}
Then $f^L$ is a modular function on $X_0(N)$ if all the following conditions are satisfied:
\begin{enumerate}
\item
the order of $f$ at the cusp $\infty$ is an integer;
\item
the order of $f$ at the cusp $0$ is an integer; 
\item 
the order of $f$ at the cusp $\vect 1 {N_0}$ is an integer, where $N_0$ is the odd part of $N$;
\item
(the mod $2$ condition) for each odd prime divisor $p$ of $N$,
\begin{equation*}
\sum_{m: m'' = p^r} \sum_{h=0}^{\ell(m)-1} e_{m, h} \equiv 0 \pmod 2,
\end{equation*}
where the first sum runs over all $m \in \mathcal{D}_N$ such that $m''$ is a power of $p$.
\end{enumerate}
\end{thm}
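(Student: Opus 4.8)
The plan is to verify that $f^L$ satisfies the sufficiency direction of the modularity criterion for products of the functions $F_{m,h}$, using the transformation and order formulas established in Section \ref{section2}. Writing $f^L=\prod_{m}\prod_{h}F_{m,h}^{Le_{m,h}}$, the organizing observation is that passing from $f$ to $f^L$ multiplies every exponent $e_{m,h}$ by $L$. Since the generalized Dedekind eta functions are holomorphic and non-vanishing on the upper half-plane, $f^L$ is a modular function on $X_0(N)$ as soon as (a) its divisor is integral, i.e.\ $\operatorname{ord}_s(f^L)\in\Z$ at every cusp $s$, and (b) the congruence conditions encoding $\Gamma_0(N)$-invariance and triviality of the multiplier system hold --- namely a condition modulo $L$ (arising from the $\z_L$-part of the transformation) together with the mod $2$ condition. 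Crucially, this sufficiency direction does not require the hypotheses that $L$ be odd and squarefree with $(L,\p(N/L))=1$; those enter only in establishing the \emph{necessity} and the exact shape of the conditions in Theorem \ref{theorem: modular unit iff}, which is precisely why they can be dropped here.

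The congruence conditions are the easy part, and they explain the loss of only an $L$-th power. For $f^L$ the mod $L$ condition reads
\begin{equation*}
\sum_{m\in\mathcal{D}_N} m\,\p(m'')\sum_{h=1}^{\ell(m)-1} h\,(L e_{m,h})
= L\sum_{m\in\mathcal{D}_N} m\,\p(m'')\sum_{h=1}^{\ell(m)-1} h\,e_{m,h}\equiv 0\pmod L,
\end{equation*}
which holds automatically, regardless of whether $f$ itself satisfies condition (4) of Theorem \ref{theorem: modular unit iff}. This is exactly the input that the factor $L$ supplies for free. The mod $2$ condition for $f^L$ carries the exponents $Le_{m,h}$: if $L$ is even every term vanishes modulo $2$ and the condition is automatic, while if $L$ is odd then $Le_{m,h}\equiv e_{m,h}\pmod 2$, so it reduces to the mod $2$ condition assumed in hypothesis (4). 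In either case $f^L$ satisfies the mod $2$ condition.

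The substance of the proof is the integrality of $\operatorname{ord}_s(f^L)=L\cdot\operatorname{ord}_s(f)$ at \emph{all} cusps $s$. Using the explicit order formula for $F_{m,h}$ at a cusp $\vect a c$ from Section \ref{section2} --- a sum of values of $\tfrac12 B_2$ weighted by the ramification of $c$, with the half-integer subtleties concentrated in the $m''=2$ terms governed by the mod $2$ condition --- one sees that $\operatorname{ord}_s(f)$ is a rational number whose denominator, after removing the contributions already controlled by hypotheses (1), (2) and (3), divides $L$. First I would show that integrality of $\operatorname{ord}_s(f)$ at the three distinguished cusps $\infty=\vect1N$, $0=\vect11$ and $\vect1{N_0}$ forces, for every cusp $s$, that $\operatorname{ord}_s(f)$ differs from an integral combination of these three orders by an element of $\tfrac1L\Z$; the three cusps are chosen to detect the behaviour at level $N$, at level $1$, and at the odd part, so as to pin down the relevant parameters $z=(c,N/c)$ and $\ell(m)$. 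Multiplying by $L$ then clears the residual denominators, giving $\operatorname{ord}_s(f^L)\in\Z$ for all $s$, and hence (a).

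I expect this last step --- propagating integrality from the three distinguished cusps to every cusp after multiplication by $L$ --- to be the main obstacle, since it demands careful bookkeeping of the denominators appearing in the $B_2$-values across cusps of each level $c\mid N$ and of their interaction with $\ell(m)$, $m''$ and $z=(c,N/c)$, including the half-integer contributions of the $m''=2$ factors. Once this integrality is in place, combining (a) with the two congruence verifications of (b) and the sufficiency criterion yields that $f^L$ is a modular function on $X_0(N)$, completing the proof.
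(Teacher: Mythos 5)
Your proposal does not follow the paper's proof, and as written it has a genuine gap. The paper's argument is short and is purely about the multiplier system: by Lemma \ref{lemma: lemma 8} (which uses conditions (1) and (2)), it suffices to check invariance of $f^L$ on matrices $\g=\mat a b c d\in\G_0(N)$ with $24\dd b$, $24N\dd c$ and $c>0$; for such $\g$, the computations behind Proposition \ref{proposition: proposition 11} and Theorem \ref{theorem: modular unit iff} (Lemma \ref{lemma: lemma 10} together with conditions (1), (3) and the mod $2$ condition, which kill the sign factors $\e_1\e_2$) show that $f(\g \tau)=\mu f(\tau)$ where $\mu$ is an $L$-th root of unity, whence $f^L(\g\tau)=\mu^L f^L(\tau)=f^L(\tau)$. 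The entire point of passing to $f^L$ is that it annihilates the residual $L$-th root of unity $\mu$ in the multiplier --- the quantity that, in Theorem \ref{theorem: modular unit iff}, could only be killed by invoking the mod $L$ condition together with $(L,\p(N/L))=1$. Neither the mod $L$ condition nor integrality of the divisor at all cusps plays any role in the paper's proof.

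Your route instead rests on a criterion that is established nowhere: ``$\tn{ord}_s(f^L)\in\Z$ at every cusp $s$, plus the mod $L$ and mod $2$ conditions, implies $f^L$ is modular, with no hypotheses on $L$.'' The paper proves no such statement, and your justification for it is incorrect: the hypotheses of Theorem \ref{theorem: modular unit iff} are \emph{not} confined to the necessity direction. Its sufficiency proof uses $L$ odd throughout (Lemma \ref{lemma: lemma 10}, the splitting of $\e_1\e_2\mu=1$ into $\e_1\e_2=1$ and $\mu=1$, the case analysis on $m''$), and uses $(L,\p(N/L))=1$ precisely in the sufficiency step forcing $\mu=1$; the impossibility of forcing $\mu=1$ without that hypothesis is exactly why Theorem \ref{theorem: modular unit suff} exists. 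Moreover, even if you completed what you yourself identify as the main obstacle --- integrality of the order of $f^L$ at every cusp --- that only expresses invariance of $f^L$ under the parabolic elements of $\G_0(N)$; the multiplier is a character of $\G_0(N)$, and $\G_0(N)$ is not generated by parabolic elements once the genus is positive, so parabolic invariance plus unproven congruence conditions cannot close the argument without redoing the explicit multiplier computation anyway. The salvageable core of your idea is the observation that the mod $L$ condition becomes automatic when every exponent is multiplied by $L$: applying Theorem \ref{theorem: modular unit iff} verbatim to $f^L$ does yield a two-line proof, but only under that theorem's hypotheses ($L$ odd and squarefree, $(L,\p(N/L))=1$), which is strictly weaker than the statement you were asked to prove and than what the paper's direct argument delivers.
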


Now we hope to prove Conjecture \ref{conjecture} under the assumption that $(L, \p(L))=1$.
Note that there is a difference between the ranges of $h$ in the expressions (\ref{equation: 101}) and (\ref{equation: 102}). Although this is a huge obstacle in our method, we were able to overcome this when $L$ is a prime. In the forthcoming work \cite{YY}, we develop a new method to overcome this obstacle when $L$ is divisible by at most two primes.

\ms
The organization of the paper is as follows. In Section \ref{section2}, we study various properties of the functions $F_{m, h}$ and prove Theorems \ref{theorem: theorem 1} and \ref{theorem: variant of theorem 1}. In Section \ref{section3}, we discuss some criteria for a product of the functions $F_{m, h}$ to be a modular function on $X_0(N)$ and prove Theorems \ref{theorem: modular unit iff} and \ref{theorem: modular unit suff}. Lastly, we prove Theorem \ref{theorem: main theorem} in Section \ref{section4}.

\ms
\section{Construction of modular units}\label{section2}
In the 1950s, Newman constructed modular units on $X_0(N)$ using the Dedekind eta function \cite{Ne57, Ne59}. Also, Ogg and Ligozat studied modular units on $X_0(N)$ whose divisors are rational \cite{Og73, Og74}. As a result, Ligozat could prove Conjectures \ref{conjecture: GOC} and \ref{conjecture} for the case where $J_0(N)$ is an elliptic curve over $\Q$ \cite{Li75}. Then Chua and Ling computed the cuspidal subgroup of $J_0(pq)$ for two primes $p$ and $q$ using the method of Ogg and Ligozat \cite{CL97}. More generally, for a squarefree integer $N$, Takagi proved that all modular units on $X_0(N)$ can be constructed using the Dedekind eta function and computed the size of the cuspidal subgroup of $J_0(N)$ \cite{Ta97}. Lastly, Wang and the second author proved that all modular units on $X_0(n^2M)$ with $n \dd 24$ and $M$ squarefree can be constructed using generalized Dedekind eta functions (which are different from $E_{g, h}$) \cite{WY20}.  

In this section, we prove that all modular units on $X_0(N)$ can be constructed using the functions $F_{m, h}$ (Theorems \ref{theorem: theorem 1} and \ref{theorem: variant of theorem 1}). To do so, we first study various properties of the functions $F_{m, h}$. Before proceeding, we recall the transformation properties of the function $E_{g, h}$.
\begin{prop}[{\cite[Th. 1]{Ya04}}]\label{proposition: 2.1}
The functions $E_{g, h}$ satisfy
\begin{equation}\label{equation: 1}
E_{g+N, h}=E_{-g, -h}=-\z_N^{-h}E_{g, h} \qa E_{g, h+N}=E_{g, h}.
\end{equation}
Moreover, let $\g=\mat a b c d \in \SL_2(\Z)$. Then for $c=0$, we have 
\begin{equation*}
E_{g, h}(\tau+b)=e^{\pi i b B(g/N)} E_{g, bg+h}(\tau),
\end{equation*}
and for $c \neq 0$,
\begin{equation*}
E_{g, h}(\g \tau)=\ve(a, b, c, d)e^{\pi i \d} E_{ag+ch, bg+dh}(\tau),
\end{equation*}
where
\begin{equation}\label{equation: 2}
\ve(a, b, c, d):=\begin{cases}
e^{\frac{\pi i}{6}(bd(1-c^2)+c(a+d-3))} & \text{ if $~c$ is odd}, \\
-ie^{\frac{\pi i}{6}(ac(1-d^2)+d(b-c+3))} & \text{ if $~d$ is odd}, \\
\end{cases}
\end{equation}
and 
\begin{equation*}
\d:=\frac{g^2 ab+2ghbc+h^2cd}{N^2}-\frac{gb+h(d-1)}{N}.
\end{equation*}
\end{prop}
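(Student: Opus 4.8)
The plan is to reduce the general transformation law to the classical automorphy of the Jacobi theta function $\theta_1$ and the Dedekind eta function $\eta$, after recording $E_{g,h}$ as an elementary multiple of $\theta_1/\eta$. (Equivalently, since $E_{g,h}$ is proportional to a Siegel function, one could instead quote the known $\SL_2(\Z)$-transformation of Siegel functions; I prefer to keep the multiplier system explicit.)

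The identities in \eqref{equation: 1} are a direct manipulation of the defining product. Using $B_2(x+1)=B_2(x)+2x$ and $B_2(-x)=B_2(x)+2x$, replacing $g$ by $g+N$, or $(g,h)$ by $(-g,-h)$, multiplies the prefactor $q^{B_2(g/N)/2}$ by $q^{g/N}$ and either inserts or deletes a single Euler factor; the telescoping identity
\begin{equation*}
1-\zeta_N^{-h}q^{-g/N}=-\zeta_N^{-h}q^{-g/N}\bigl(1-\zeta_N^{h}q^{g/N}\bigr)
\end{equation*}
then collapses the resulting quotient of products to the scalar $-\zeta_N^{-h}$, while $E_{g,h+N}=E_{g,h}$ is immediate from $\zeta_N^{h+N}=\zeta_N^{h}$. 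In the case $c=0$ one has $\gamma\tau=\tau+b$ with $b\in\Z$, so the nome $q$ is unchanged and only the fractional powers $q^{g/N}$ acquire phases; tracking these yields the factor $e^{\pi i b B_2(g/N)}$ and sends $\zeta_N^{h}$ to $\zeta_N^{h+bg}$, i.e. $h\mapsto bg+h$.

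The substance is the case $c\neq0$. Writing $a_1=g/N$, $a_2=h/N$ and $w=e^{2\pi i(a_1\tau+a_2)}$, the product defining $E_{g,h}$ rearranges as $q^{B_2(a_1)/2}(1-w)\prod_{n\geq1}(1-wq^{n})(1-w^{-1}q^{n})$, which the product formula for $\theta_1$ identifies, up to an elementary constant, with $\theta_1(a_1\tau+a_2\mid\tau)$:
\begin{equation*}
E_{g,h}(\tau)=\frac{1}{i}\,e^{\pi i a_2}\,e^{\pi i a_1^{2}\tau}\,\frac{\theta_1(a_1\tau+a_2\mid\tau)}{\eta(\tau)}.
\end{equation*}
I would then apply $\eta(\gamma\tau)=\nu(\gamma)(c\tau+d)^{1/2}\eta(\tau)$ together with the theta transformation
\begin{equation*}
\theta_1\!\left(\tfrac{v}{c\tau+d}\,\Big|\,\gamma\tau\right)=\kappa(\gamma)\,(c\tau+d)^{1/2}e^{\pi i cv^{2}/(c\tau+d)}\,\theta_1(v\mid\tau).
\end{equation*}
The bookkeeping that makes this work is that the transformed characteristic $(a_1',a_2'):=(a_1,a_2)\gamma=(aa_1+ca_2,\,ba_1+da_2)$ equals $(g',h')/N$ with $(g',h')=(ag+ch,\,bg+dh)$, and that $a_1\gamma\tau+a_2=(a_1'\tau+a_2')/(c\tau+d)$; taking $v=a_1'\tau+a_2'$ then cancels the factors $(c\tau+d)^{1/2}$, so $E_{g,h}(\gamma\tau)$ is a $\tau$-independent scalar multiple of $E_{g',h'}(\tau)$.

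The one computation I expect to require care is checking that this scalar is genuinely $\tau$-independent and then reading off $\delta$ and $\varepsilon$. Combining the exponents $\pi i a_1^{2}\gamma\tau$, $\pi i c(a_1'\tau+a_2')^{2}/(c\tau+d)$ and $-\pi i(a_1')^{2}\tau$ over the denominator $c\tau+d$ and simplifying with $ad-bc=1$, the numerator factors as $\bigl(ab\,a_1^{2}+2bc\,a_1a_2+cd\,a_2^{2}\bigr)(c\tau+d)$, so the $\tau$-dependence cancels and leaves $\pi i(abg^{2}+2bcgh+cdh^{2})/N^{2}$, the first half of $\delta$; the constant $1/i$ cancels in the ratio while the prefactor phases combine to $e^{\pi i(a_2-a_2')}=e^{-\pi i(bg+h(d-1))/N}$, the second half. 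Finally, differentiating the theta transformation at $v=0$ and comparing with $\theta_1'(0\mid\tau)=2\pi\eta(\tau)^{3}$ forces $\kappa(\gamma)=\nu(\gamma)^{3}$, so the residual root of unity is $\kappa(\gamma)/\nu(\gamma)=\nu(\gamma)^{2}$, the square of the eta multiplier. The genuinely laborious part, and the main obstacle, is to evaluate $\nu(\gamma)^{2}$ by its explicit Dedekind-sum-free formula, splitting into the cases $c$ odd and $d$ odd and tracking the branch of $(c\tau+d)^{1/2}$, so as to reproduce exactly the closed form \eqref{equation: 2} for $\varepsilon(a,b,c,d)$ (including the factor $-i$ in the $d$-odd case).
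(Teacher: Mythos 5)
There is nothing in the paper for your argument to be compared against: Proposition \ref{proposition: 2.1} is quoted verbatim from \cite[Th. 1]{Ya04} and used as a black box, so your proposal stands or falls on its own terms. It stands. The manipulations giving (\ref{equation: 1}) and the $c=0$ case are exactly right, and so is your key identity: with $a_1=g/N$, $a_2=h/N$, the triple-product expansion of $\theta_1$ gives
\begin{equation*}
E_{g,h}(\tau)=-i\,e^{\pi i a_2}\,e^{\pi i a_1^2\tau}\,\frac{\theta_1\bigl(a_1\tau+a_2\mid\tau\bigr)}{\eta(\tau)}.
\end{equation*}
I also checked the two computations your reduction hinges on: using $ad-bc=1$, the combined exponent has numerator $\bigl(ab\,a_1^2+2bc\,a_1a_2+cd\,a_2^2\bigr)(c\tau+d)$, so the $\tau$-dependence cancels and produces the first term of $\delta$, while $e^{\pi i(a_2-a_2')}$ produces the second term; and differentiating the $\theta_1$-law at $v=0$ against $\theta_1'(0\mid\tau)=2\pi\eta(\tau)^3$ indeed forces $\kappa(\gamma)=\nu(\gamma)^3$, hence a residual multiplier $\nu(\gamma)^2$. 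Two remarks on the step you defer, which is where all of (\ref{equation: 2}) actually lives. First, squaring the Petersson--Knopp closed forms for $\nu$ kills the Jacobi symbols $\leg{d}{c}$ and $\leg{c}{d}$, and the $-i$ in the $d$-odd case is precisely the square of the branch constant $e^{-\pi i/4}$ in $\{-i(c\tau+d)\}^{1/2}$, so the target formula is reachable exactly as you expect. Second, those closed forms are stated for $c>0$; for $c<0$ you must apply them to $-\gamma$ and then return via $E_{-g',-h'}=-\zeta_N^{-h'}E_{g',h'}$ from (\ref{equation: 1}) --- this is a genuine reduction step, not branch bookkeeping, and it should be made explicit. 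With that, your outline is a complete and essentially standard proof of the quoted theorem.
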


\begin{rem}\label{remark: range h}
The function $F_{m, h}$ is dependent only on the residue class of $h$ modulo $\ell(m)$ as
 \begin{equation*}
E_{\a m \ell, \d (h+\ell) N'}=E_{\a m \ell, \d h N'+\d N}=E_{\a m \ell, \d h N'} \quad \tn{by (\ref{equation: 1})}.
\end{equation*}
\end{rem}

\begin{rem}
If $\a \equiv \a' \pmod {m''}$, then $(\a-\a')m\ell$ is divisible by $m''m\ell=mm'=N$. Also, if $\d \equiv \d' \pmod {m''}$, then $(\d-\d')N'$ is divisible by $m''N'=m'N/{\ell^2}$, which is a multiple of $N$. So by (\ref{equation: 1}), the functions $F_{m, h}$ defined using different choices of representatives for $(\zmod {m''})^\times/\{\pm 1\}$ differ only by roots of unity.
\end{rem}

Suppose that $m''=2$. Since $\ell$ divides $m''$, we have either $\ell=1$ or $\ell=2$. 
If $\ell=1$, then $m'=m''\ell=2$, $m=N/{m'}=N/2$ and $N'=N/\ell=N$. 
Note that 
\begin{equation*}
E_{N/2, 0}(\tau)=q^{-1/24}\prod_{n=1}^\infty (1-q^{n-\frac{1}{2}})^2=\frac{\eta(\tau/2)^2}{\eta(\tau)^2}.
\end{equation*}
Thus, we have 
\begin{equation}\label{equation: 3}
F_{N/2, 0}(\tau)=\frac{\eta((N/2)\tau)}{\eta(N\tau)}.
\end{equation}
If $\ell=2$, then $m'=m''\ell=4$, $m=N/{m'}=N/4$ and $N'=N/\ell=N/2$. Note that
\begin{equation*}
E_{N/2, N/2}(\tau)=q^{-1/24}\prod_{n=1}^\infty(1+q^{n-\frac{1}{2}})^2=\frac{\eta(\tau)^4}{\eta(\tau/2)^2 \eta(2\tau)^2}.
\end{equation*}
Thus, we have
\begin{equation}\label{equation: 4}
F_{N/4, 0}(\tau)=\frac{\eta((N/4)\tau)}{\eta((N/2)\tau)} \qa F_{N/4, 1}(\tau)=\frac{\eta((N/2)\tau)^2}{\eta((N/4)\tau)\eta(N\tau)}.
\end{equation}

So if $m''=2$, then for any $\g \in \G_0(N)$ we have $F_{m, h}(\g \tau)=\epsilon F_{m, h}(\tau)$ for some $24$-th root of unity $\epsilon$. Moreover, by the result of Ligozat \cite[Prop. 3.5]{Yoo} we can show that
$F_{m, h}^k$ is a modular function on $X_0(N)$ if and only if $k$ is divisible by $24$. 
In general, we have the following.

\begin{lem}\label{lemma: F is modular}
For any $m\in \mathcal{D}_N$ and $\g \in \Gamma_0(N)$, we have
\begin{equation*}
F_{m, h}(\g \tau)=\epsilon F_{m, h}(\tau)
\end{equation*}
for some $\lcm(2m'', 24)$-th root of unity $\epsilon$ depending on $m$, $h$ and $\g$. In particular,
\begin{equation*}
F_{m, h}(\tau)^{\lcm(2m'', 24)}
\end{equation*}
is a modular function on $X_0(N)$.
\end{lem}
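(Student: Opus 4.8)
The plan is to reduce the assertion to the transformation law for $E_{g,h}$ in Proposition~\ref{proposition: 2.1} and then to control the order of the resulting root of unity. Throughout I assume $m''\neq 2$; the case $m''=2$ has already been treated by the explicit identities \eqref{equation: 3} and \eqref{equation: 4} together with Ligozat's criterion in the discussion preceding the lemma, and there $\lcm(2m'',24)=24$. Fix $\gamma=\mat a b c d\in\Gamma_0(N)$. The first step is a change of variables in the inner argument $N'\tau$: since $N\mid c$ and $N'\mid N$, the matrix
\begin{equation*}
\gamma':=\mat a {N'b} {c/N'} d
\end{equation*}
has integer entries and determinant $ad-bc=1$, so $\gamma'\in\SL_2(\Z)$, and one checks directly that $N'(\gamma\tau)=\gamma'(N'\tau)$. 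Hence every factor $E_{\a m\ell,\,\d h N'}(N'\tau)$ of $F_{m,h}$ transforms under $\gamma$ exactly as $E_{\a m\ell,\,\d h N'}$ transforms under $\gamma'$, and since $F_{m,h}$ is an honest product for $m''\neq 2$ I may apply Proposition~\ref{proposition: 2.1} to each factor separately (using its $c=0$ clause precisely for the translations in $\Gamma_0(N)$).

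The second step is to match the transformed indices back to the original ones. For $c\neq 0$ the factor indexed by $\a$ becomes a root of unity times $E_{g^\ast,\,h^\ast}(N'\tau)$ with $g^\ast=a\a m\ell+(c/N')\d h N'$ and $h^\ast=N'b\,\a m\ell+d\,\d h N'$. Because $N\mid c$ one has $g^\ast\equiv a\a m\ell\pmod N$, and because $N'b\,\a m\ell=Nb\a m\equiv 0\pmod N$ one has $h^\ast\equiv d\,\d h N'\pmod N$. Now $a$ is a unit modulo $N$, hence modulo $m''$, so multiplication by $a$ permutes $(\Z/m''\Z)^\times/\{\pm1\}$; writing $a\a\equiv\nu_\a\,\rho(\a)\pmod{m''}$ with $\nu_\a\in\{\pm1\}$ and $\rho$ a permutation of $S_{m''}$, the relation $ad\equiv 1\pmod{m''}$ gives $d\,\d\equiv\nu_\a\,\d_{\rho(\a)}\pmod{m''}$, and therefore also modulo $\ell$ since $\ell\mid m''$. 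As $h^\ast$ is a multiple of $N'$ and $N=\ell N'$, its class modulo $N$ is governed by $d\,\d h$ modulo $\ell$; thus $(g^\ast,h^\ast)\equiv(\nu_\a\,\rho(\a) m\ell,\,\nu_\a\,\d_{\rho(\a)} h N')$ in each coordinate modulo $N$. Applying $E_{-g,-h}=-\z_N^{-h}E_{g,h}$ whenever $\nu_\a=-1$ and the periodicities of \eqref{equation: 1} throughout, the product over $\a$ is carried bijectively onto $\prod_\a E_{\a m\ell,\,\d h N'}(N'\tau)=F_{m,h}(\tau)$. This proves $F_{m,h}(\gamma\tau)=\epsilon F_{m,h}(\tau)$ for a root of unity $\epsilon$.

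The third and main step is to bound the order of $\epsilon$, which collects three kinds of contributions. First, the prefactors $\ve(a,N'b,c/N',d)$ of Proposition~\ref{proposition: 2.1} are $12$-th roots of unity. Second, each use of $E_{-g,-h}=-\z_N^{-h}E_{g,h}$ or of the second periodicity in \eqref{equation: 1} yields a sign $-1$ together with a factor $\z_N^{-h}$; since every second index in sight is a multiple of $N'$ and $N=\ell N'$, such a factor is an $\ell$-th, hence an $m''$-th, root of unity. Third, Proposition~\ref{proposition: 2.1} contributes a phase $e^{\pi i\Phi}$, where $\Phi$ denotes the corresponding exponent with $g=\a m\ell$ and second index $\d h N'$. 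Using $g/N=\a/m''$, $N'=m''m$, $N=\ell N'$ and $N\mid c$, the five monomials of $\Phi$ evaluate to $\a^2amb/m''$, an even integer, $\d^2h^2d\,(c/N)/\ell$, $-\a m b$, and $-\d h(d-1)/\ell$; each corresponding phase is a $2m''$-th root of unity, once more because $\ell\mid m''$. (For the translations $c=0$, the $c=0$ clause of Proposition~\ref{proposition: 2.1} replaces this by $e^{\pi i N'b\,B_2(\a/m'')}$, whose summand $m''mb/6$ is where the factor $24$ rather than $2m''$ enters.) Collecting, the order of $\epsilon$ divides $\lcm(2m'',24)$.

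Finally, raising $F_{m,h}(\gamma\tau)=\epsilon F_{m,h}(\tau)$ to the power $\lcm(2m'',24)$ makes the factor $\epsilon$ trivial for every $\gamma\in\Gamma_0(N)$, so $F_{m,h}^{\lcm(2m'',24)}$ is $\Gamma_0(N)$-invariant; as each $E_{g,h}$ is holomorphic and nonvanishing on $\bbH$ with a $q$-expansion at every cusp, this power is meromorphic at the cusps and hence a modular function on $X_0(N)$. I expect the genuine difficulty to lie entirely in the third step, namely in organizing the monomials of $\Phi$ and the $\z_N$-factors so that the divisibilities $\ell\mid m''$ and $m''\mid N$ apply cleanly, and in checking that the permutation $\rho$ and the signs $\nu_\a$ introduce nothing of order exceeding $\lcm(2m'',24)$.
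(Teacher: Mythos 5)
Your proof is correct and follows essentially the same route as the paper's: the same conjugation $N'(\g\tau)=\g'(N'\tau)$ with $\g'=\mat{a}{N'b}{c/N'}{d}$, the same factor-by-factor application of Proposition~\ref{proposition: 2.1}, the same index-matching via the periodicities in (\ref{equation: 1}) (the paper phrases your $\rho,\nu_\a$ bookkeeping as ``$\a a$ runs through representatives of $(\zmod{m''})^\times/\{\pm1\}$ up to $2\ell$-th roots of unity''), and the same divisibility accounting showing the phases are $2m''$-th roots of unity while the $\ve$-factors contribute the $24$.
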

\begin{proof}
By the discussion above, it suffices to prove the case where $m'' \neq 2$, which we assume from now on.

Let $\g=\mat a b c d \in \G_0(N)$. Note that $N'\g \tau=\mat a {bN'} {c/N'} {d} (N'\tau)$. 
So by Proposition \ref{proposition: 2.1}, if $c=0$, then 
\begin{equation*}
E_{\a m \ell, \d h N'}(N'\g \tau)=e^{\pi i bN' B_2(\a/{m''})} E_{\a m \ell, \d h N'}(N'\tau)
\end{equation*}
and hence 
\begin{equation*}
F_{m, h}(\g \tau)=\left(\prod_{\a \in S_{m''}} e^{\pi i bN' B_2(\a/{m''})} \right) F_{m, h}(\tau).
\end{equation*}
Since $m''$ divides $N'$, the result follows in this case. Again by Proposition \ref{proposition: 2.1}, if $c \neq 0$, then 
\begin{equation*}
\begin{split}
E_{\a m \ell, \d h N'}(N'\g \tau)&=\ve(a, bN', c/N', d) e^{2\pi i {A_\a}} E_{\a am\ell+\d ch, \a b m\ell N'+\d dh N'}(N'\tau)\\
&=\ve(a, bN', c/N', d) e^{2\pi i {A_\a}} (-\z_\ell^{-\d dh})^{\d ch/N} E_{\a am\ell, \d dhN'}(N'\tau),
\end{split}
\end{equation*}
where $\ve(a, bN', c/N', d)$ is a $24$-th root of unity determined by (\ref{equation: 2}) and 
\begin{equation*}
\begin{split}
A_\a&:=  \frac{(\a m \ell)^2 a b N'+2(\a m \ell) (\d h N') bc+(\d h N')^2 cd/N'}{2N^2}-\frac{(\a m\ell)bN'+\d h N'(d-1)}{2N}\\
&\equiv \frac{\a^2 mab}{2m''}+\frac{\d^2 h^2 cd/N}{2\ell}-\frac{\a mb}{2} -\frac{\d h (d-1)}{2\ell} \pmod \Z
\end{split}
\end{equation*}
so that $e^{2 \pi i {A_\a}}$ is a $2m''$-th root of unity. 
As $\a$ goes through $S_{m''}$, $\a a$ also goes through a set of representatives of $(\zmod {m''})^\times/{\{\pm 1\}}$. If $\a'$ is the element in $S_{m''}$ such that $\a a \equiv \pm \a' \pmod {m''}$, then $E_{\a a m\ell, \d dhN'}$ and $E_{\a' m \ell, \d' d h N'}$ differ by a $2\ell$-th root of unity. Therefore we have
\begin{equation*}
F_{m, h}(\g \tau)=\epsilon F_{m, h}(\tau)
\end{equation*}
for some $\lcm (2m'', 24)$-th root of unity $\epsilon$. This completes the proof.
\end{proof}

\begin{rem}\label{remark: root of unity for m''=2}
Let $\g=\mat a b c d \in \G_0(N)$ with $24 \dd b$, $24N \dd c$ and $c>0$, and let $\g'=\mat a {bM} {c/M} {d}$ for a divisor $M$ of $N$.
Since $d$ is odd, by \cite[Lem. 1]{Ya04} we have
\begin{equation*}
\eta(M \g\tau)=\eta(\g' (M\tau))=\leg{c/M}{d} e^{\frac{\pi i d}{4}}\sqrt{-i(c\tau+d)} ~\eta(M\tau),
\end{equation*}
where $\leg {\cdot}{\cdot}$ is the Legendre--Jacobi symbol.
Hence by (\ref{equation: 3}) and (\ref{equation: 4}), we have
\begin{equation*}
F_{N/2, 0}(\g \tau)=\leg{2}{d}F_{N/2, 0}(\tau)=(-1)^{\frac{d^2-1}{8}} F_{N/2, 0}(\tau),
\end{equation*}
\begin{equation*}
F_{N/4, 0}(\g \tau)=(-1)^{\frac{d^2-1}{8}} F_{N/4, 0}(\tau) \qa F_{N/4, 1}(\g\tau)=F_{N/4, 1}(\tau).
\end{equation*}
\end{rem}

\begin{lem}\label{lemma: order of Fmh}
Let $m \in \mathcal{D}_N$. Then for an integer $h$, the order of $F_{m, h}$ at a cusp $\vect a c$ of $X_0(N)$ is 
\begin{equation*}
\frac{\ell(N', c)^2}{4(c^2, N)} \sum_{\a \in (\zmod {m''})^\times} P_2\left(	\frac{\a a'}{m''}+\frac{\d hc'}{\ell}\right),
\end{equation*}
where $P_2(x)=B_2(\{x\})$ is the second Bernoulli function, $a'=\frac{N'a}{(N', c)}$ and $c'=\frac{c}{(N', c)}$. 
\end{lem}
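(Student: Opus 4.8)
The plan is to use the additivity of $\tn{ord}_{\vect a c}$ over the product defining $F_{m,h}$ and to compute the contribution of each factor $E_{\a m\ell,\,\d hN'}(N'\tau)$ separately via the transformation formula in Proposition \ref{proposition: 2.1}. I will treat the generic case $m''\neq 2$; the case $m''=2$ is handled the same way (now $(\zmod 2)^\times$ is a singleton and $F_{m,h}=E_{m\ell,hN'}(N'\tau)^{1/2}$ carries an extra factor $\tfrac12$ that exactly compensates for the absence of the $\pm1$-doubling below), and it can alternatively be read off from the eta quotients (\ref{equation: 3})--(\ref{equation: 4}) together with the known order of $\eta$ at cusps.

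First I would fix a representative $\g=\mat a b c d\in\SL_2(\Z)$ of the cusp $\vect a c$; this exists since $c\mid N$ and $(a,N)=1$ force $(a,c)=1$. The heart of the computation is a matrix factorization. Setting $e:=(N',c)$ and noting $(N'a,c)=(N',c)=e$ because $(a,c)=1$, one can choose $\g_1\in\SL_2(\Z)$ with first column $(N'a/e,\ c/e)$ so that
\begin{equation*}
\mat{N'a}{N'b}{c}{d}=\g_1\mat{e}{\ast}{0}{N'/e}
\end{equation*}
for some integer $\ast$. Hence $N'\g\tau=\g_1\tau_1$ with $\tau_1=\tfrac{e^2}{N'}\tau+\tfrac{e\ast}{N'}$, and since $c/e\neq 0$ the $c\neq0$ branch of Proposition \ref{proposition: 2.1} applies to each factor, giving
\begin{equation*}
E_{\a m\ell,\,\d hN'}(N'\g\tau)=(\text{root of unity})\cdot E_{g^*,\,h^*}(\tau_1),\qquad g^*=\tfrac{N'a}{e}\,\a m\ell+\tfrac{c}{e}\,\d hN'.
\end{equation*}

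Next I would extract the order. The leading exponent of $E_{g^*,h^*}$ in $q_1=e^{2\pi i\tau_1}$ is $\tfrac12 P_2(g^*/N)$, and $\tau_1=\tfrac{e^2}{N'}\tau+\mathrm{const}$ converts this to a leading $q$-exponent $\tfrac{e^2}{2N'}P_2(g^*/N)$ for the factor, where $q=e^{2\pi i\tau}$. Using $N=m\ell m''$ and $N'=mm''$ one simplifies
\begin{equation*}
\frac{g^*}{N}=\frac{\a a'}{m''}+\frac{\d hc'}{\ell},\qquad a'=\frac{N'a}{(N',c)},\quad c'=\frac{c}{(N',c)},
\end{equation*}
which is exactly the argument of $P_2$ in the statement. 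Summing over $\a\in S_{m''}$ yields the $q$-order $\tfrac{e^2}{2N'}\sum_{\a\in S_{m''}}P_2(\cdots)$. Since $\ell^2\mid m'=\ell m''$ we have $\ell\mid m''$, so the summand depends only on $\a\in(\zmod{m''})^\times$ and not on the chosen inverse $\d$; as $P_2$ is even and $\a\mapsto-\a$ negates the argument (replacing $\d$ by $-\d$), the sum over $S_{m''}$ is half the sum over all of $(\zmod{m''})^\times$, which unfolds the $\pm1$-quotient and produces the denominator $4$. Finally, multiplying the $q$-order by the width $w=N/(c^2,N)$ of $\vect a c$ and using $N/N'=\ell$ gives the claimed
\begin{equation*}
\tn{ord}_{\vect a c}(F_{m,h})=\frac{\ell\,(N',c)^2}{4(c^2,N)}\sum_{\a\in(\zmod{m''})^\times}P_2\!\left(\frac{\a a'}{m''}+\frac{\d hc'}{\ell}\right).
\end{equation*}

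The routine portions are the transformation formula of Proposition \ref{proposition: 2.1}, whose $\ve$- and $e^{\pi i\d}$-factors are roots of unity irrelevant to the order, and the Bernoulli/Fourier bookkeeping. The step requiring genuine care is the normalization attached to the factorization: correctly identifying $e=(N',c)$, keeping track of the three local parameters $q_1$, $q$ and $q_w$ so that the width enters exactly once, and checking that $(g^*,h^*)\not\equiv(0,0)\pmod N$ (which holds because $\g_1$ acts invertibly modulo $N$) so that the leading term is genuinely nonvanishing.
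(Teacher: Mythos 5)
Your proof is correct and takes essentially the same route as the paper's: both factor $N'\g$ as an $\SL(2,\Z)$ matrix (with first column $(N'a/e,\,c/e)$, $e=(N',c)$) times an upper-triangular remainder, apply Proposition \ref{proposition: 2.1}, read off the leading $q$-exponent $\frac{e^2}{2N'}\sum_{\a\in S_{m''}}P_2(g^*/N)$, unfold $S_{m''}$ to $(\zmod{m''})^\times$ by evenness of $P_2$, and multiply by the width $N/(c^2,N)$. The only (immaterial) difference is the case $m''=2$, which the paper handles by reading the order off the eta quotients (\ref{equation: 3})--(\ref{equation: 4}) via Ligozat's formula, while you run the same Siegel-function computation with the exponent $1/2$ replacing the $\pm1$-unfolding.
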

\begin{proof}
Suppose that $m''=2$. For a divisor $M$ of $N$, the order of $\eta(M\tau)$ at the cusp $\vect a c$ is $\frac{N(c, M)^2}{24M(c^2, N)}$ by \cite[Prop. 3.2.8]{Li75} and so the result follows using (\ref{equation: 3}) and (\ref{equation: 4}).  

Next, suppose that $m'' \neq 2$. Let $b, d, b'$ and $d'$ be integers such that 
\begin{equation*}
\g=\mat a b c d \in \SL(2, \Z) \qa \g'=\mat {a'}{b'}{c'}{d'} \in \SL(2, \Z).
\end{equation*}
We check that
\begin{equation*}
N'\g \tau=\g' \left(\frac{c(c\tau+d)}{N'(c')^2}-\frac{d'}{c'}\right).
\end{equation*}
Hence we have
\begin{equation*}
\begin{split}
F_{m, h}(\g \tau)&=\prod_{\a \in S_{m''}} E_{\a m \ell, \d h N'}\left( \g' \left(\frac{c(c\tau+d)}{N'(c')^2}-\frac{d'}{c'}\right)\right)\\
&=\epsilon \prod_{\a \in S_{m''}} E_{\a m \ell a'+\d h N' c', \a m \ell b'+\d h N' d'} \left(\frac{c(c\tau+d)}{N'(c')^2}-\frac{d'}{c'}\right)
\end{split}
\end{equation*}
for some root of unity $\epsilon$. The Fourier expansion of $F_{m, h}(\g\tau)$ starts from $q^A$, where
\begin{equation*}
A=\frac{c^2}{2N'(c')^2} \sum_{\a \in S_{m''}} P_2\left( \frac{\a m \ell a'+\d h N' c'}{N}\right)=\frac{(N', c)^2}{2N'}\sum_{\a \in S_{m''}} P_2\left(\frac{\a a'}{m''} + \frac{\d h c'}{\ell}\right).
\end{equation*}
Since the width of a cusp $\vect a c$ is $\frac{N}{(c^2, N)}$ (cf. \cite[Lem. 2.10]{Yoo}), the order of $F_{m, h}$ at a cusp $\vect a c$ is 
\begin{equation*}
\frac{NA}{(c^2, N)}=\frac{\ell(N', c)^2}{4(c^2, N)}\sum_{\a \in (\zmod {m''})^\times} P_2\left(\frac{\a a'}{m''} + \frac{\d h c'}{\ell}\right)
\end{equation*}
as claimed. (Note that $P_2(x)$ is an even function since $P_2(-x)=P_2(1-x)=P_2(x)$.)
\end{proof}

\begin{cor}\label{corollary: order at special cusps}
The order of $F_{m, h}$ at the cusp $\infty$ is 
\begin{equation*}
\frac{m}{24}\prod_{p \mid m''}(1-p).
\end{equation*}
Also, the order of $F_{m, h}$ at the cusp $0$ is
\begin{equation*}
\frac{m''}{24\ell} \sum_{k \mid m''} \frac{\mu(k)}{k} (\ell, kh)^2,
\end{equation*}
where $\mu(k)$ is the M\"obius function.
Furthermore, if $N=2N_0$ for some odd integer $N_0$, then the order of $F_{m, h}$ at the cusp $\vect 1 {N_0}$ is
\begin{equation*}
\frac{m}{24(m, 2)} \prod_{p \mid m'', ~p\neq 2}(1-p).
\end{equation*}
\end{cor}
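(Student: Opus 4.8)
The plan is to obtain all three formulas by specializing the general order formula of Lemma \ref{lemma: order of Fmh} to the cusps $\vect 1N$, $\vect 11$, and $\vect 1{N_0}$, and then evaluating the resulting finite sums of the Bernoulli function $P_2$. The one analytic input I need is the distribution relation $\sum_{j=1}^{k}P_2(j/k)=\tfrac{1}{6k}$, together with its M\"obius-twisted form
\begin{equation*}
\sum_{\a\in(\zmod n)^\times}P_2(\a/n)=\frac{1}{6n}\sum_{d\mid n}\mu(d)d=\frac{1}{6n}\prod_{p\mid n}(1-p),
\end{equation*}
obtained by writing the coprimality condition as $\sum_{d\mid(\a,n)}\mu(d)$ and regrouping. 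With these in hand each computation reduces to determining the parameters $(N',c)$, $a'$, $c'$, $(c^2,N)$ and the prefactor $\ell(N',c)^2/(4(c^2,N))$ at the given cusp. I note that this character-sum approach applies uniformly, and in particular recovers the $m''=2$ values automatically, so no separate treatment of that case is needed.

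For $\infty=\vect 1N$ one has $(N',N)=N'$, hence $a'=1$, $c'=\ell$, $(c^2,N)=N$, and the prefactor equals $N/(4\ell)$. The argument of $P_2$ is then $\a/m''+\d h$ with $\d h\in\Z$, so it collapses to $\sum_{\a\in(\zmod{m''})^\times}P_2(\a/m'')$; substituting the M\"obius formula and using $\ell m''=N/m$ gives $\tfrac{m}{24}\prod_{p\mid m''}(1-p)$. This case is routine and fixes the general recipe.

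The hard part will be the cusp $0=\vect 11$, where $a'=N'$, $c'=1$, and the prefactor is $\ell/4$, because here the $h$-dependence does not disappear. The argument of $P_2$ is $\a N'/m''+\d h/\ell\equiv \d h/\ell\pmod\Z$ (since $\a N'/m''=\a\ell\in\Z$), and as $\d=\a^{-1}$ runs over $(\zmod{m''})^\times$ with $\a$, I must evaluate $S:=\sum_{\d\in(\zmod{m''})^\times}P_2(\d h/\ell)$. I would expand the coprimality condition by M\"obius over $k\mid m''$, reducing $S$ to the sums $\sum_{j=1}^{m''/k}P_2(jkh/\ell)$. Each summand is periodic in $j$ with period $e=\ell/(\ell,kh)$, so the inner sum equals $(m''/k)/(6e^2)=m''(\ell,kh)^2/(6k\ell^2)$ \emph{provided} $e\mid m''/k$. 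The genuinely delicate point---and the main obstacle---is exactly this divisibility: I would verify $e\mid m''/k$ by comparing $q$-adic valuations for each prime $q$, splitting on whether $v_q(\ell)\le v_q(kh)$, and using $\ell\mid m''$ together with $k\mid m''$. Summing then yields $\tfrac{m''}{24\ell}\sum_{k\mid m''}\tfrac{\mu(k)}{k}(\ell,kh)^2$.

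Finally, for $\vect 1{N_0}$ with $N=2N_0$ and $N_0$ odd, note that $\ell$ is odd, whence $(N',N_0)=N_0/\ell$, $a'=2$, $c'=\ell$, $(c^2,N)=N_0$, and the prefactor is $N_0/(4\ell)$; the $P_2$-argument reduces to $2\a/m''$. I would finish by a parity case split: if $m''$ is odd then $m$ is even and $2$ is invertible modulo $m''$, so $\sum_\a P_2(2\a/m'')=\sum_\a P_2(\a/m'')$; if $m''$ is even then $m$ is odd, $m''=2m_0''$ with $m_0''$ odd, $2\a/m''=\a/m_0''$, and $\a\mapsto\a\bmod m_0''$ bijects $(\zmod{m''})^\times$ onto $(\zmod{m_0''})^\times$. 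In both cases the M\"obius formula applies and, after substituting $\ell m''=2N_0/m$, the two branches merge into $\tfrac{m}{24(m,2)}\prod_{p\mid m'',\,p\ne2}(1-p)$, the factor $(m,2)$ and the exclusion of $p=2$ being precisely the residue of the parity bookkeeping.
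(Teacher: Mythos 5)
Your proposal is correct and follows essentially the same route as the paper: it specializes Lemma \ref{lemma: order of Fmh} at the three cusps and evaluates the resulting unit-group sums of $P_2$ by M\"obius inversion plus the distribution relation, which is exactly the content of the paper's Lemma \ref{lemma: sum of units mod m} (your periodicity/divisibility check, resting on $\ell \mid m''$, is the same regrouping carried out in that lemma's proof, and your parity bookkeeping at $\vect 1 {N_0}$ matches the paper's final case split). One harmless slip: at the cusp $0$ the integer you discard is $\a N'/m'' = \a m$, not $\a \ell$, though only its integrality is needed, so the argument is unaffected.
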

\begin{proof}
For the cusp $\infty$, we have $a=a'=1$, $c=N$ and $c'=N/{N'}=\ell$. So by Lemma \ref{lemma: sum of units mod m} below, we have
\begin{equation*}
\frac{\ell (N')^2}{4N} \sum_{\a \in (\zmod {m''})^\times} P_2\left(\frac{\a }{m''}\right)=\frac{N'}{24m''}\prod_{p \mid m''}(1-p)=\frac{m}{24}\prod_{p \mid m''}(1-p).
\end{equation*}

For the cusp $0$, we have $a=c=c'=1$ and $a'=N'$. Thus by Lemma \ref{lemma: sum of units mod m}, the order of $F_{m, h}$ at $0$ is
\begin{equation*}
\frac{\ell}{4}\sum_{\a \in (\zmod {m''})^\times} P_2\left(\frac{\d h}{\ell}\right)=\frac{\ell}{4}\sum_{\a \in (\zmod {m''})^\times} P_2\left(\frac{\a h}{\ell}\right)=\frac{m''}{24\ell} \sum_{k \mid m''} \frac{\mu(k)}{k}(\ell, kh)^2.
\end{equation*}

Lastly, let $N=2N_0$ for some odd integer $N_0$. Since $\ell$ is a divisor of $N_0$, we have $N'=2N''$, where $N''=N_0/\ell$. Thus for the cusp $\vect 1 {N_0}$, we have $(N', c)=N''$, $a'=2$ and $c'=\ell$. Hence by Lemma \ref{lemma: sum of units mod m}, the order of $F_{m, h}$ at the cusp $\vect 1 {N_0}$ is 
\begin{equation*}
\frac{N''}{4}\sum_{\a \in (\zmod {m''})^\times} P_2\left( \frac{2\a}{m''} \right)=\frac{N''}{24m''} \sum_{k \mid m''} \frac{\mu(k)}{k}(m'', 2k)^2=\frac{m}{48} \sum_{k \mid m''} \mu(k)k(m''/k, 2)^2.
\end{equation*}
If $m$ is odd, then $m''$ is even. Let $n=m''/2$. Then we have
\begin{equation*}
 \sum_{k \mid m''} \mu(k)k(m''/k, 2)^2=\sum_{k \mid n} \mu(k)k(2^2-2\cdot 1^2)=2\sum_{k \mid n} \mu(k) k=2\prod_{p \mid n}(1-p).
\end{equation*}
If $m$ is even, then $m''$ is odd. Thus, $(m''/k, 2)=1$ and so 
\begin{equation*}
 \sum_{k \mid m''} \mu(k)k(m''/k, 2)^2= \sum_{k \mid m''} \mu(k)k=\prod_{p \mid m''}(1-p).
\end{equation*}
This completes the proof.
\end{proof}

\begin{lem}\label{lemma: sum of units mod m}
Let $x$ be a positive integer. Then we have
\begin{equation*}
\sum_{\a \in (\zmod x)^\times} P_2\left(\frac{\a}{x}\right)=\frac{1}{6x}\prod_{p\mid x}(1-p).
\end{equation*}
More generally, let $y$ be a divisor of $x$. Then for any integer $n$, we have
\begin{equation*}
\sum_{\a \in (\zmod x)^\times} P_2\left(\frac{\a n}{y}\right)=\frac{x}{6y^2} \sum_{k\mid x} \frac{\mu(k)}{k}(y, kn)^2.
\end{equation*}
\end{lem}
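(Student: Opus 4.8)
The plan is to reduce both identities to the elementary ``full residue system'' evaluation $\sum_{j=0}^{m-1}P_2(j/m)=\frac{1}{6m}$ by imposing the coprimality condition $(\a,x)=1$ via M\"obius inversion. Since the first identity is the special case $y=x$, $n=1$ of the second (for $k\mid x$ we have $(x,k)=k$, so $\sum_{k\mid x}\frac{\mu(k)}{k}k^2=\sum_{k\mid x}\mu(k)k=\prod_{p\mid x}(1-p)$), I would only prove the general formula.

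First I would record two ingredients. \textbf{(i)} For any positive integer $m$ one has $\sum_{j=0}^{m-1}P_2(j/m)=\frac{1}{6m}$; this follows either from the distribution relation $B_2(0)=m\sum_{j=0}^{m-1}B_2(j/m)$ or by a direct computation using $\sum j$ and $\sum j^2$, noting that $P_2(j/m)=B_2(j/m)$ for $0\le j<m$. \textbf{(ii)} A periodicity sub-lemma: if $y\mid M$ and $t$ is any integer, then
\[
\sum_{\beta=0}^{M-1}P_2\!\left(\frac{\beta t}{y}\right)=\frac{M\,(t,y)^2}{6y^2}.
\]
To prove (ii) I would write $t/y=t'/y'$ in lowest terms, so that $y'=y/(t,y)$ and $(t',y')=1$; then $\beta\mapsto P_2(\beta t'/y')$ has period $y'$, and since $y'\mid y\mid M$ the sum consists of $M/y'$ complete periods. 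As $\beta$ runs through a complete residue system modulo $y'$ so does $\beta t'$ (because $(t',y')=1$), whence the sum equals $\frac{M}{y'}\sum_{r=0}^{y'-1}P_2(r/y')=\frac{M}{y'}\cdot\frac{1}{6y'}=\frac{M(t,y)^2}{6y^2}$ by (i).

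With these in hand I would use the identity $\sum_{k\mid (\a,x)}\mu(k)=[(\a,x)=1]$ to write
\[
\sum_{\a\in(\zmod x)^\times}P_2\!\left(\frac{\a n}{y}\right)=\sum_{k\mid x}\mu(k)\sum_{\substack{0\le \a<x\\ k\mid \a}}P_2\!\left(\frac{\a n}{y}\right)=\sum_{k\mid x}\mu(k)\sum_{\beta=0}^{x/k-1}P_2\!\left(\frac{\beta\,kn}{y}\right),
\]
after the substitution $\a=k\beta$. Applying (ii) with $M=x/k$ and $t=kn$ turns each inner sum into $\frac{(x/k)(kn,y)^2}{6y^2}$, and summing over $k$ yields exactly $\frac{x}{6y^2}\sum_{k\mid x}\frac{\mu(k)}{k}(y,kn)^2$, as claimed.

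The main obstacle is the hypothesis $y\mid M$ required to invoke (ii): here $M=x/k$, so I must check that $y'=y/(kn,y)$ divides $x/k$, i.e. that $\frac{x(kn,y)}{ky}\in\Z$. This is not transparent term-by-term (for instance $(kn,y)$ need not be a multiple of $k$), so I would verify it by a prime-by-prime valuation argument, using that $y\mid x$ and that $\mu(k)\ne 0$ forces $k$ squarefree, so $v_p(k)\in\{0,1\}$. Writing $a=v_p(x)$, $b=v_p(y)\le a$, $\kappa=v_p(k)$ and $\nu=v_p(n)$, the valuation in question is $a+\min(\kappa+\nu,b)-\kappa-b$, and a short case check on $\kappa\in\{0,1\}$ (using $a\ge b$, and $a\ge 1$ whenever $\kappa=1$) shows it is nonnegative for every prime $p$. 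Once this divisibility is secured, the computation above is routine.
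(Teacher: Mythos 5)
Your proof is correct and takes essentially the same route as the paper's: M\"obius inversion over divisors $k\mid x$ to impose the coprimality condition, followed by evaluating the inner sums as complete periods of $P_2$ via $\sum_{j=0}^{m-1}P_2(j/m)=\frac{1}{6m}$ (your derivation of the first identity as the case $y=x$, $n=1$ is a harmless shortcut; the paper proves it separately by the same method). In fact you are slightly more careful than the paper, which splits $\sum_{j=0}^{x/k-1}$ into blocks of length $z=y/(y,kn)$ without checking $z\mid x/k$; your prime-by-prime valuation argument supplies exactly that (easily filled) justification.
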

\begin{proof}
By the M\"obius inversion formula, we have
\begin{equation*}
\sum_{\a \in (\zmod x)^\times} P_2\left(\frac{\a}{x}\right)=\sum_{k \mid x} \mu(k) \sum_{j=0}^{x/k-1} P_2\left(\frac{jk}{x}\right).
\end{equation*}

Recall that the second Bernoulli polynomial $B_2(x)$ satisfies
\begin{equation*}
\sum_{j=0}^{n-1} B_2\left(x +\frac{j}{n}\right)=\frac{1}{n} B_2(nx),
\end{equation*}
which implies
\begin{equation*}
\sum_{j=0}^{n-1} P_2\left(x +\frac{j}{n}\right)=\frac{1}{n} P_2(nx).
\end{equation*}
Thus, we have
\begin{equation*}
\sum_{k\mid x} \mu(k) \sum_{j=0}^{x/k-1} P_2\left(\frac{j}{x/k}\right)=\sum_{k \mid x} \mu(k)\left(\frac{k}{x} P_2(0)\right)=\frac{1}{6x}\sum_{k \mid x} \mu(k)k=\frac{1}{6x}\prod_{p \mid x}(1-p).
\end{equation*}

Next, for each divisor $k$ of $x$, let 
\begin{equation*}
z=\frac{y}{(y, kn)} \qa k'=\frac{kn}{(y, kn)}.
\end{equation*}
Note that by definition, $(z, k')=1$. Again by the M\"obius inversion formula, we have
\begin{equation*}
\begin{split}
\sum_{\a \in (\zmod x)^\times} P_2\left(\frac{\a n}{y}\right)&=\sum_{k\mid x} \mu(k) \sum_{j=0}^{x/k-1} P_2\left(\frac{jkn}{y}\right)\\
&=\sum_{k\mid x} \mu(k) \sum_{t=0}^{x/{(kz)}-1} \sum_{j=0}^{z-1} P_2\left(\frac{(j+z t)k'}{z}\right)\\
&=\sum_{k\mid x} \mu(k) \sum_{t=0}^{x/{(kz)}-1} \sum_{j=0}^{z-1} P_2\left(\frac{j}{z}\right)\\
&=\frac{x}{6} \sum_{k\mid x} \frac{\mu(k)}{kz^2}=\frac{x}{6y^2} \sum_{k\mid x} \frac{\mu(k)}{k}(y, kn)^2
\end{split}
\end{equation*}
as desired.
\end{proof}

\begin{lem}\label{lemma: galois action}
For an integer $s$ prime to $L$, we have
\begin{equation*}
\s_s(\tn{div} (F_{m, h})) = \tn{div} (F_{m, sh}).
\end{equation*}
\end{lem}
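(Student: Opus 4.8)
\emph{Proof proposal.} The plan is to prove the stronger pointwise statement that for every cusp $\vect ac$ of $X_0(N)$,
\[
\tn{ord}_{\vect ac}(F_{m, h}) = \tn{ord}_{\s_s \vect ac}(F_{m, sh}),
\]
and then to sum over all cusps. Granting this identity, and using that $\s_s$ permutes the cusps bijectively, we obtain
\begin{align*}
\s_s(\tn{div} (F_{m, h})) &= \sum_{\vect ac} \tn{ord}_{\vect ac}(F_{m, h}) \cdot \s_s\vect ac \\
&= \sum_{\vect ac} \tn{ord}_{\s_s\vect ac}(F_{m, sh}) \cdot \s_s\vect ac = \tn{div} (F_{m, sh}),
\end{align*}
where the last equality is just the reindexing $Q=\s_s\vect ac$. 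So everything reduces to the displayed pointwise identity.

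To prove that identity I would apply the explicit order formula of Lemma \ref{lemma: order of Fmh} to both sides. By \eqref{equation: cusp Galois action} we have $\s_s\vect ac = \vect{s^* a}{c}$, which is a cusp of the same level $c$; hence the prefactor $\frac{\ell(N', c)^2}{4(c^2, N)}$ agrees on the two sides and only the Bernoulli sums need to be compared. Writing $a' = \frac{N'a}{(N',c)}$ and $c' = \frac{c}{(N',c)}$, and substituting $a \mapsto s^* a$ (so that $a' \mapsto s^* a'$) and $h \mapsto sh$ on the right, the claim becomes
\[
\sum_{\a \in (\zmod{m''})^\times} P_2\!\left(\frac{\a a'}{m''} + \frac{\d h c'}{\ell}\right) = \sum_{\a \in (\zmod{m''})^\times} P_2\!\left(\frac{\a s^* a'}{m''} + \frac{\d s h c'}{\ell}\right),
\]
where $\d$ denotes the inverse of $\a$ modulo $m''$ as in Definition \ref{definition: Fmh}.

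The main computation is then a change of variables $\b \equiv \a s^* \pmod{m''}$ in the right-hand sum. First I would record the divisibilities $\ell \mid m''$ (from $\ell^2 \mid N/m = \ell m''$) and $\ell \mid L$, together with $(s^*, m'') = 1$ (which follows from $(s^*, aN)=1$ and $m'' \mid N$); these ensure that $\a \mapsto \b$ is a bijection of $(\zmod{m''})^\times$ and that reductions modulo $\ell$ inside $P_2$ are legitimate. The congruence $\a s^* \equiv \b \pmod{m''}$ handles the first summand, while for the second I would use $\d s \equiv \a^{-1}s \equiv \b^{-1}(s^* s) \equiv \b^{-1} \pmod{\ell}$, the final step because $s^* s \equiv 1 \pmod L$ and $\ell \mid L$. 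Since $P_2 = B_2(\{\cdot\})$ is periodic modulo $1$, each term on the right becomes $P_2\!\left(\frac{\b a'}{m''} + \frac{\b^{-1} h c'}{\ell}\right)$, and summing over $\b$ recovers the left-hand side. I expect the only delicate point to be the bookkeeping of moduli: the index $\a$ and its inverse $\d$ live modulo $m''$, whereas the second argument of $P_2$ is naturally read modulo $\ell$, so one must invoke $\ell \mid m''$ and $\ell \mid L$ consistently to push the relation $s^* s \equiv 1$ down to the correct modulus. The case $m''=2$ needs no separate treatment, since the order formula of Lemma \ref{lemma: order of Fmh} is uniform in $m$.
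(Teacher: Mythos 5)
Your proposal is correct and follows essentially the same route as the paper: reduce to comparing orders at individual cusps via \eqref{equation: cusp Galois action}, apply Lemma \ref{lemma: order of Fmh} to both sides, and reindex the Bernoulli sum by the bijection $\a \mapsto \b \equiv \a s^* \pmod{m''}$ of $(\zmod{m''})^\times$. If anything, your bookkeeping of moduli is more careful than the paper's, which asserts $ss^* \equiv 1 \pmod{m''}$ --- a congruence that can fail (e.g.\ $N=18$, $m=1$ gives $m''=6$ while $L=3$) --- whereas you correctly reduce the second summand only modulo $\ell$, using $\ell \mid m''$ and $\ell \mid L$, which is all that the periodicity of $P_2$ requires.
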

\begin{proof}
Let $\vect a c$ be a cusp of $X_0(N)$. By (\ref{equation: cusp Galois action}), it suffices to show that the order of $F_{m, h}$ at $\vect a c$ is equal to that of $F_{m, sh}$ at $\vect {s^* a}{c}$, where $s^* \in \Z$ such that $ss^* \equiv 1 \pmod L$ and $(s^*, aN)=1$.
 By Lemma \ref{lemma: order of Fmh}, the order of $F_{m, h}$ at $\vect a c$ is 
\begin{equation*}
\frac{\ell(N', c)^2}{4(c^2, N)} \sum_{\a \in (\zmod {m''})^\times} P_2\left(	\frac{\a a'}{m''}+\frac{\d hc'}{\ell}\right).
\end{equation*}

Note that $(s^*a)'=s^* a'$ and $c'$ is unchanged. Again by Lemma \ref{lemma: order of Fmh}, the order of $F_{m, sh}$ at a cusp $\vect {s^*a} c$ is
 \begin{equation*}
\frac{\ell(N', c)^2}{4(c^2, N)} \sum_{\a \in (\zmod {m''})^\times} P_2\left(	\frac{\a s^*a'}{m''}+\frac{\d shc'}{\ell}\right).
 \end{equation*}
 Since $s^*$ is relatively prime to $m''$, the multiplication by $s^*$ (and taking reduction modulo ${m''}$) defines a bijection on $(\zmod {m''})^\times$. Since $\a \d \equiv 1 \pmod {m''}$ and 
$s^* s \equiv 1 \pmod {m''}$, we have $(\a s^*)(\d s) \equiv 1 \pmod {m''}$. Thus, the result follows  by replacing $\a s^*$ and $\d s$ by $\a$ and $\d$, respectively.
\end{proof}

\begin{cor}\label{corollary: rationality}
Let $m\in \mathcal{D}_N$. For any integer $s$ prime to $L$, we have 
\begin{equation*}
\s_s(\tn{div}(F_{m, 0}))=\tn{div}(F_{m, 0}).
\end{equation*}
\end{cor}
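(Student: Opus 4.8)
The plan is to read this off directly from Lemma \ref{lemma: galois action}, which is the immediately preceding result. That lemma states that for every integer $s$ prime to $L$ and every integer $h$,
\begin{equation*}
\s_s(\tn{div}(F_{m, h})) = \tn{div}(F_{m, sh}).
\end{equation*}
Specializing to $h = 0$ and observing that $s \cdot 0 = 0$, the right-hand side becomes $\tn{div}(F_{m, 0})$, which is exactly the claim. So the entire argument is one substitution; there is no genuine obstacle to overcome here, and the corollary is really just a record of the fixed case $h=0$ of the lemma.

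If one prefers an argument that does not invoke Lemma \ref{lemma: galois action} as a black box, the plan is to unwind its proof in this special case using the order formula of Lemma \ref{lemma: order of Fmh}. For a cusp $\vect a c$ the order of $F_{m, 0}$ is
\begin{equation*}
\frac{\ell(N', c)^2}{4(c^2, N)} \sum_{\a \in (\zmod {m''})^\times} P_2\!\left(\frac{\a a'}{m''}\right),
\end{equation*}
where the $h$-dependent term $\d h c'/\ell$ has simply dropped out. By \eqref{equation: cusp Galois action} the Galois element $\s_s$ sends $\vect a c$ to $\vect {s^* a}{c}$ with $ss^* \equiv 1 \pmod L$, and at this cusp $a'$ is replaced by $s^* a'$ while $c'$ is unchanged. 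Since $(s^*, m'') = 1$, the map $\a \mapsto \a s^*$ is a bijection of $(\zmod {m''})^\times$, so the sum over $\a$ of $P_2(\a s^* a'/m'')$ equals the sum over $\a$ of $P_2(\a a'/m'')$. Hence $F_{m, 0}$ has the same order at $\vect {s^* a}{c}$ as at $\vect a c$, and since $\s_s$ permutes the cusps this shows $\s_s(\tn{div}(F_{m,0})) = \tn{div}(F_{m,0})$.

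The only thing worth flagging is conceptual rather than technical: the point of the corollary is that $\tn{div}(F_{m,0})$ is a \emph{rational} cuspidal divisor (Galois-invariant over $\Q(\z_L)/\Q$, hence over $\ov\Q/\Q$ since all cusps are defined over $\Q(\z_L)$), which is precisely the property that makes the functions $F_{m,0}$ the natural building blocks for producing rational cuspidal divisor classes in the proof of Theorem \ref{theorem: main theorem}. No estimate or case analysis is needed; the vanishing of the $h$-term is what forces invariance.
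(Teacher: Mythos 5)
Your proposal is correct and matches the paper exactly: the paper's proof of Corollary \ref{corollary: rationality} is precisely the one-line specialization of Lemma \ref{lemma: galois action} at $h=0$. Your alternative argument via the order formula of Lemma \ref{lemma: order of Fmh} is also in the paper, as Remark \ref{remark: order of Fm0}, where it is noted to give a second proof of the corollary.
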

\begin{proof}
The result is obvious by Lemma \ref{lemma: galois action}.
\end{proof}

\begin{rem}\label{remark: order of Fm0}
By Lemmas \ref{lemma: order of Fmh} and \ref{lemma: sum of units mod m}, the order of $F_{m, 0}$ at a cusp $\vect a c$ is
\begin{equation*}
\frac{\ell(N', c)^2}{4(c^2, N)} \sum_{\a \in (\zmod {m''})^\times} P_2\left(	\frac{\a a'}{m''}\right)=\frac{\ell(N', c)^2}{24m''(c^2, N)}  \sum_{k \mid m''} \mu(k)k(m''/k, a')^2.
\end{equation*}
Since $a$ is relatively prime to $m''$, we have $(m''/k, a')=(m''/k, N'')$, where $N''=N'/{(N', c)}$. Thus, the order of $F_{m, 0}$ at a cusp $\vect a c$ is independent of $a$ and this gives another proof of Corollary \ref{corollary: rationality}. 
\end{rem}

\begin{rem}\label{example: 6}
By \cite[Th. 3.6]{Yoo}, $F_{m, 0}$ must be written as a product of Dedekind eta functions (up to constant). Indeed, we can explicitly write $F_{m, 0}$ as follows: If $m''\neq 2$, then
\begin{equation*}
\begin{split}
F_{m, 0}(\tau)&=\prod_{\a \in S_{m''}} E_{\a m \ell, 0}(N'\tau)\\
&=\e q^A \prod_{\a \in S_{m''}} \prod_{n=1}^\infty (1-q^{N'(n-1)+\a m})(1-q^{N'(n-1)+(m''-\a)m})\\
&=\e q^A \prod_{\a \in (\zmod {m''})^\times} \prod_{n=0}^\infty (1-q^{m(m''n+\a)})=\e q^A \prod_{n=1, (n, m'')=1}^\infty (1-q^{mn})\\
&=\e q^A \prod_{k \mid m''} \left(\prod_{n=1}^\infty (1-q^{kmn})\right)^{\mu(k)}=\e \prod_{k \mid m''} \eta(km\tau)^{\mu(k)},
\end{split}
\end{equation*}
where $\epsilon$ is a root of unity and $A$ is the order of $F_{m, 0}$ at the cusp $\infty$. Note that by (\ref{equation: 3}) and (\ref{equation: 4}), the same holds for $m''=2$.

Alternatively, by \cite[Prop. 3.2.8]{Li75} the order of $\eta(km\tau)$ at a cusp $\vect a c$ of $X_0(N)$ is $\frac{N(c, km)^2}{24km(c^2, N)}$. Thus, the order of
$\prod_{k \mid m''} \eta(km\tau)^{\mu(k)}$ at a cusp $\vect a c$ is
\begin{equation*}
\begin{split}
&\phantom{=}\frac{m'}{24(c^2, N)}\sum_{k \mid m''} \frac{\mu(k)}{k}(c, km)^2=\frac{\ell}{24(c^2, N)}\sum_{k'\mid m''} \mu(m''/k')k'(c, N'/k')^2\\
&=\frac{\ell}{24(c^2, N)}\sum_{k' \mid m''} \frac{\mu(m''/k')}{k'}(ck', N')^2=\frac{\ell(N', c)^2}{24(c^2, N)} \sum_{k' \mid m''}\frac{\mu(m''/k')}{k'}(k', N'')^2\\
&=\frac{\ell(N', c)^2}{24m''(c^2, N)}\sum_{k \mid m''} \mu(k)k(m''/k, N'')^2,
\end{split}
\end{equation*}
where $k'=m''/k$ and $N''=N'/{(N', c)}$. Therefore by Remark \ref{remark: order of Fm0}, 
there is a constant $\e \in \C^\times$ such that
\begin{equation*}
F_{m, 0}=\e \prod_{k \mid m''} \eta(km\tau)^{\mu(k)}.
\end{equation*}
(Thus, the formula in Lemma \ref{lemma: order of Fmh} agrees with the formulas in the previous literatures.)
\end{rem}

\begin{rem}\label{example: 7}
Let $m \in \mathcal{D}_N$ such that $\ell(m) \dd 24$. Then for all $\a \in S_{m''}$, we have $\a \equiv \d \pmod \ell$. Thus, 
\begin{equation*}
\z_N^{\d h N'}=\z_\ell^{\d h}=\z_\ell^{\a h}
\end{equation*}
and we may write $E_{\a m \ell, \d h N'}(N'\tau)$ as 
\begin{equation*}
\begin{split}
E_{\a m \ell, \d h N'}(N'\tau)&=q^{N'B_2(\a/{m''})/2} \prod_{n=1}^{\infty}(1-e^{2\pi i (N'(n-1)\tau+\a m\tau+\a h/\ell)})(1-e^{2\pi i (N' n \tau-\a m\tau-\a h /\ell)})\\
&=q^{N'B_2(\a/{m''})/2} \prod_{n=1}^{\infty}(1-e^{2\pi i(m''(n-1)+\a)(m\tau+h/\ell)})(1-e^{2\pi i(m''n-\a)(m\tau+h/\ell)}).
\end{split}
\end{equation*}
It follows that
\begin{equation*}
F_{m, h}(\tau)=\e q^A \prod_{n=1, (n, m'')=1}^\infty (1-e^{2\pi i n(m\tau+h/\ell)})=\e' \prod_{k \mid m''} \eta(k(m\tau+h/\ell))^{\mu(k)}
\end{equation*}
for some roots of unity $\e$ and $\e'$, and $A$ is the order of $F_{m, h}$ at the cusp $\infty$. Hence the functions $F_{m, h}$ are products of the functions $\eta(k(m\tau+h/\ell))$, which are used in \cite{WY20} to construct modular units on $X_0(n^2 M)$ with $n \dd 24$ and $M$ squarefree.
\end{rem}

\begin{lem}\label{lemma: lemma 9}
Let $m\in \mathcal{D}_N$ such that $m''\neq 2$. Suppose that $\g=\mat a b c d \in \SL(2, \Z)$ such that $24 \dd b$, $24N \dd c$ and $c\neq 0$. Then we have
\begin{equation*}
F_{m, h}(\g \tau)=(-1)^{\frac{(d-1)\p(m'')}{4}} e^{2\pi i B} \prod_{\a \in S_{m''}} E_{\a am\ell, \d dhN'} (N'\tau),
\end{equation*}
where
\begin{equation*}
B=\frac{h(1-d)}{2\ell} \sum_{\a \in S_{m''}} \d.
\end{equation*}
\end{lem}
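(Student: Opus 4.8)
The plan is to apply Proposition \ref{proposition: 2.1} to each factor of $F_{m,h}$ and to keep track of every resulting root of unity, following the computation in the proof of Lemma \ref{lemma: F is modular} but retaining the phases explicitly instead of merely noting that they are roots of unity. Throughout I assume $m''\neq 2$, so that $m''\geq 3$ for $m\in\mathcal{D}_N$ (the value $m''=1$ would force $m=N\notin\mathcal{D}_N$) and hence $\p(m'')$ is even. First I would write $N'\g\tau=\g_{N'}(N'\tau)$ with $\g_{N'}=\mat{a}{bN'}{c/N'}{d}$; since $24N\dd c$ we have $N'\dd c$, so $\g_{N'}\in\SL(2,\Z)$, and moreover $24\dd bN'$, $24\dd c/N'$, while $d$ is odd because $ad-bc=1$ with $bc$ even. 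Applying Proposition \ref{proposition: 2.1} to each $E_{\a m\ell,\d hN'}$ gives
\[
E_{\a m\ell,\d hN'}(N'\g\tau)=\ve(a,bN',c/N',d)\,e^{2\pi i A_\a}\,E_{a\a m\ell+\d ch,\;N'(b\a m\ell+d\d h)}(N'\tau),
\]
where $A_\a$ is exactly the quantity computed in the proof of Lemma \ref{lemma: F is modular}.

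Next I would evaluate the automorphy factor. Using the ``$d$ odd'' case of (\ref{equation: 2}) together with $24\dd bN'$ and $24\dd c/N'$, every summand of the exponent other than the term $3d$ lies in $12\Z$, so $\ve(a,bN',c/N',d)=-i\,e^{\pi i d/2}=i^{d-1}$. Since this factor does not depend on $\a$, taking the product over the $\p(m'')/2$ elements of $S_{m''}$ yields $i^{(d-1)\p(m'')/2}=(-1)^{(d-1)\p(m'')/4}$, which is precisely the sign in the statement.

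Then I would reduce the two indices modulo $N$ via (\ref{equation: 1}). The second index differs from $\d dhN'$ by $N'b\a m\ell=Nb\a m$, a multiple of $N$, so it may be replaced by $\d dhN'$ without cost; the first index differs from $\a a m\ell$ by $\d ch=N\cdot(\d ch/N)$, and since $\z_N^{-\d dhN'}=\z_\ell^{-\d dh}$ this reduction contributes the factor $(-\z_\ell^{-\d dh})^{\d ch/N}$, just as in Lemma \ref{lemma: F is modular}. Because $24N\dd c$, the exponent $\d ch/N$ is even, so the sign is trivial and the factor is the pure phase with exponent $-(c/N)d\d^2h^2/\ell$. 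Inserting the reduction of $A_\a$ recorded in the proof of Lemma \ref{lemma: F is modular}, namely $A_\a\equiv\frac{\a^2 mab}{2m''}+\frac{(c/N)d\d^2h^2}{2\ell}-\frac{\a mb}{2}-\frac{\d h(d-1)}{2\ell}\pmod\Z$, and noting that $\frac{\a mb}{2}\in\Z$ since $24\dd b$, the total phase attached to each $\a$ reduces modulo $\Z$ to $\frac{\a^2 mab}{2m''}-\frac{(c/N)d\d^2h^2}{2\ell}-\frac{\d h(d-1)}{2\ell}$. Summing the last term over $S_{m''}$ already produces $B=\frac{h(1-d)}{2\ell}\sum_{\a\in S_{m''}}\d$.

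The crux, and the step I expect to be most delicate, is to show that the two remaining contributions sum to integers, i.e. $\sum_{\a\in S_{m''}}\frac{\a^2 mab}{2m''}\in\Z$ and $\sum_{\a\in S_{m''}}\frac{(c/N)d\d^2h^2}{2\ell}\in\Z$. Using $(m''-\a)^2\equiv\a^2\pmod{m''}$ and the fact that $\a\mapsto\a^{-1}$ permutes $S_{m''}$, both reduce to the single input $2\sum_{\a\in S_{m''}}\a^2\equiv\sum_{\a\in(\zmod{m''})^\times}\a^2\pmod{m''}$ together with the evaluation $6\sum_{\a\in(\zmod{m''})^\times}\a^2=m''\prod_{p\mid m''}(1-p)+2(m'')^2\p(m'')$, which follows from Lemma \ref{lemma: sum of units mod m} via $B_2(\a/m'')=\frac{\a^2}{(m'')^2}-\frac{\a}{m''}+\frac16$. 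Since $\ell\dd m''$, the right-hand side is divisible by $m''$ and hence by $\ell$; combined with $24\dd b$ and $24N\dd c$ this yields the two desired integralities, so the product of all phases equals $e^{2\pi i B}$ and the identity follows.
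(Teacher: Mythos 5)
Your proposal is correct and follows essentially the same route as the paper's proof: apply Proposition \ref{proposition: 2.1} to $\g_{N'}=\mat{a}{bN'}{c/N'}{d}$ factor by factor, evaluate $\ve(a,bN',c/N',d)=-ie^{\pi i d/2}$, reduce the indices via (\ref{equation: 1}), and kill the residual phases using the divisibility $m''\dd 12\sum_{\a\in S_{m''}}\a^2$ (equivalently, the evaluation of $\sum_{\a\in(\zmod{m''})^\times}\a^2$), exactly as the paper does. The only cosmetic difference is that you deduce that evaluation from Lemma \ref{lemma: sum of units mod m} rather than redoing the M\"obius-inversion computation inline.
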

\begin{proof}
Since $c\neq 0$, by the computation in the proof of Lemma \ref{lemma: F is modular} we have
\begin{equation*}
E_{\a m \ell, \d h N'}(N'\g \tau)=\ve(a, bN', c/{N'}, d) e^{2 \pi i A_\a} (-\z_\ell^{-\d dh})^{\d c h/N} E_{\a am\ell, \d dhN'} (N'\tau).
\end{equation*}
Since $24 \dd b$ and $24N \dd c$, by (\ref{equation: 2}) we have $\ve(a, bN', c/{N'}, d)=-ie^{\pi i d/2}=(-1)^{\frac{d-1}{2}}$.

For the terms involving $A_\a$, we compute that
\begin{equation*}
\begin{split}
\sum_{\a \in S_{m''}} (\a^2+(m''-\a)^2)&=\sum_{\a \in (\zmod {m''})^\times} \a^2=\sum_{k \mid m''} \mu(k)k^2 \sum_{j=0}^{m''/k-1} j^2\\
&=\frac{1}{6}\sum_{k \mid m''} \mu(k)k^2 \left(\frac{m''}{k}-1\right) \frac{m''}{k} \left(\frac{2m''}{k}-1\right)\\
&=\frac{(m'')^2 \p(m'')}{3}+\frac{m''}{6}\prod_{p\mid m''} (1-p)
\end{split}
\end{equation*}
and consequently
\begin{equation*}
\sum_{\a \in S_{m''}} \a^2=m''\sum_{\a \in S_{m''}} \a-\frac{(m'')^2 \p(m'')}{12}+\frac{m''}{12}\prod_{p \mid m''}(1-p).
\end{equation*}
The same equality holds when we replace $\a$ by $\d$. Thus, in particular, we have
\begin{equation*}
\frac{12}{m''} \sum_{\a \in S_{m''}} \a^2  \equiv \frac{12}{m''} \sum_{\a \in S_{m''}} \d^2 \equiv 0 \pmod \Z.
\end{equation*}
Therefore, since $24 \dd b$ and $24N \dd c$, the following are all integers:
\begin{equation*}
\sum_{\a \in S_{m''}} \frac{\a^2 m ab}{2m''}, \quad \sum_{\a \in S_{m''}} \frac{\d^2 dh^2 c}{2\ell N}, \quad \sum_{\a \in S_{m''}} \frac{\a m b}{2},
\end{equation*}
and hence
\begin{equation*}
\sum_{\a \in S_{m''}} A_\a \equiv \frac{h(1-d)}{2\ell} \sum_{\a \in S_{m''}} \d \pmod \Z.
\end{equation*}
Similarly, we have $\sum_{\a \in S_{m''}} \frac{\d^2 dh^2c}{\ell N}\in \Z$ and hence $\prod_{\a \in S_{m''}} (-\z_\ell^{-\d dh})^{\d ch/N}=1$. This completes the proof.
\end{proof}

\ms
We finish this section by proving Theorems \ref{theorem: theorem 1} and \ref{theorem: variant of theorem 1}.
\begin{proof}[Proof of Theorem \ref{theorem: theorem 1}]
By the same argument as in the proof of \cite[Th. 2]{WY20}, we can deduce the result. Indeed, let
\begin{equation*}
\scU(N):=\{\tn{modular units on } X_0(N)\}/{\C^\times}.
\end{equation*}
and
\begin{equation*}
\scS(N):=\bigcup_{m \in \mathcal{D}_N} \{(m, h) : 0\leq h \leq \p(\ell(m))-1\}.
\end{equation*}
We shall prove the following four claims. 
\begin{enumerate}[(a)]
\item \label{item: a}
The cardinality of the set $\scS(N)$ is equal to the rank of $\scU(N)$, i.e., the number of the cusps of $X_0(N)$ minus one.
\item \label{item: b}
There are no multiplicative relations among $F_{m, h}$ for $(m, h) \in \scS(N)$.
\item \label{item: c}
Let $\scU_0$ be the subgroup of $\scU(N)$ formed by the products of $F_{m, h}$ for $(m, h) \in \scS(N)$. Then $\scU_0$ has the same rank as $\scU(N)$, i.e., $\scU_0$ is of finite index in $\scU(N)$.
\item \label{item: d}
If $g \in \scU(N)$ and $k$ is a positive integer such that $g^k \in \scU_0$, then $g \in \scU_0$.
\end{enumerate}

To prove Claim \ref{item: a}, we first define a bijection $\iota$ on the set of all positive divisors of $N$ as follows:
For $r \in \Z$, let $\iota_r(f):=\gauss{\frac{r+f+1}{2}}$ (resp. $\gauss{\frac{r-f}{2}}$) if $f$ is even (resp. $f$ is odd). Then for a positive integer $r$, $\iota_r$ induces a bijection on the set $\{0, \dots, r\}$. Note that for any $0\leq f \leq r$, we have ${\gauss{(r-f)/2}}=\min(\iota_r(f), r-\iota_r(f))$. Thus, we have
\begin{equation}\label{equation}
p^{\gauss{(r-f)/2}}=(p^{\iota_r(f)}, p^{r-\iota_r(f)}) \quad \text{ for any primes } ~p.
\end{equation}
Now, let $N=\prod_{i=1}^t p_i^{r_i}$ be the prime factorization of $N$. For a divisor $m=\prod_{i=1}^t p_i^{f_i}$ of $N$, let
\begin{equation*}
\iota(m):=\prod_{i=1}^t p_i^{\iota_{r_i}(f_i)}.
\end{equation*}
Then the map $\iota$ yields a bijection on the set of all positive divisors of $N$ as claimed. Furthermore, by (\ref{equation}) we have
\begin{equation*}
\ell(m)=\prod_{i=1}^t p_i^{\gauss{(r_i-f_i)/2}} = \prod_{i=1}^t (p_i^{\iota_{r_i}(f_i)}, p_i^{r_i-\iota_{r_i}(f_i)}) =(\iota(m), N/{\iota(m)}).
\end{equation*}
Hence for any positive divisor $m$ of $N$, the cardinality of the set $\{ (m, h) : 0\leq h \leq \p(\ell(m))-1\}$ is equal to the number of cusps of level $\iota(m)$.
Since $\iota(N)=\prod_{i=1}^t p_i^{\iota_{r_i}(r_i)}=\prod_{i=1, r_i \in 2\Z}^t p_i^{r_i}$, the number of cusps of level $\iota(N)$ is $1$. 
Thus, Claim \ref{item: a} follows.

To prove Claim \ref{item: b}, we observe that for $m''\neq 2$, the Fourier expansion of $F_{m, h}$ is of the form:
\begin{equation*}
\begin{split}
&\prod_{\a \in S_{m''}} q^{B_2(\a/{m''})/2} \prod_{n=1}^\infty (1-\z_{\ell(m)}^{\d h} q^{N'(n-1)+\a m})(1-\z_{\ell(m)}^{-\d h} q^{N' n-\a m})\\
&=\e q^A\prod_{\a \in S_{m''}} \prod_{n=1}^{\infty} (1-\z_{\ell(m)}^{\d h} q^{N'(n-1)+\a m})(1-\z_{\ell(m)}^{-\d h} q^{N'(n-1) +(m''-\a)m})\\
&=\e q^A \prod_{\a=1, (\a, m'')=1}^{m''-1} \prod_{n=0}^{\infty} (1-\z_{\ell(m)}^{\d h} q^{N'n+\a m})=\e q^A(1-\z_{\ell(m)}^h q^m+\cdots).
\end{split}
\end{equation*}
(If $m''=2$, then the Fourier expansion of $F_{m, h}$ is defined over $\Q$.)
Assume that $\prod_{(m, h) \in \scS(N)} F_{m, h}^{e_{m, h}}$ is a constant function. Considering the second term of the 
Fourier expansion, we find
\begin{equation*}
\sum_{h=0}^{\p(\ell(1))-1} e_{1, h} \z_{\ell(1)}^h=0.
\end{equation*}
Since $1, \z_{\ell(1)}, \dots, \z_{\ell(1)}^{\p(\ell(1))-1}$ form a basis for $\Q(\z_{\ell(1)})$ over $\Q$, we have $e_{1, h}=0$ for all $h$. Similarly, for the next divisor $m$ of $N$, we easily prove that $e_{m, h}=0$ for all $h$ by considering the Fourier coefficient of $q^{m+A}$. Continuing this way, we conclude that $e_{m, h}=0$ for all $(m, h) \in \scS(N)$. 

Claim \ref{item: c} follows from Lemma \ref{lemma: F is modular}, which states that for each $F_{m, h}$, there exists a positive integer $k$ such that $F_{m, h}^k \in \scU_0$. Finally, to prove Claim \ref{item: d} we follow the argument as in the proof of \cite[Th. 2]{WY20}. The key properties used in the proof are that $1, \z_{\ell(m)}, \dots, \z_{\ell(m)}^{\p(\ell(m))-1}$ form an integral basis for $\Q(\z_{\ell(m)})$ over $\Q$ and that the second Fourier coefficient of $F_{m, h}$ is $\z_{\ell(m)}^h$ (after normalizing the first Fourier coefficient to $1$) seen as above. This completes the proof of the theorem.
\end{proof}

\begin{proof}[Proof of Theorem \ref{theorem: variant of theorem 1}]
The proof is almost the same as that of Theorem \ref{theorem: theorem 1}. The fact we use is that $\z_{\ell(m)}$, $\dots$, $\z_{\ell(m)}^{\p(\ell(m))}$ form an integral basis for $\Q(\z_{\ell(m)})$ over $\Q$.
\end{proof}

\ms
\section{Criteria for modular units}\label{section3}
In this section, we prove Theorems \ref{theorem: modular unit iff} and \ref{theorem: modular unit suff}.
Before proceeding, we verify that Theorem \ref{theorem: modular unit iff} is a direct generalization of the well-known criteria of Ligozat \cite[Prop. 3.2.1]{Li75}. More specifically,
let $N$ be an odd squarefree integer. Then $L=1$ and $m''$ is a prime power $p^k$ if and only if $m=N/p$ and $m'=m''=p$. Thus, the conditions in Theorem \ref{theorem: modular unit iff} reduce to
\begin{enumerate}
\item
the order of $f$ at $\infty$ is an integer,
\item
the order of $f$ at $0$ is an integer, and
\item
for any prime divisor $p$ of $N$, $e_{N/p, 0}$ is even.
\end{enumerate}
According to Remark \ref{example: 6}, we have
\begin{equation*}
F_{m, 0}(\tau)=\e\prod_{k \mid m''} \eta(km\tau)^{\mu(k)}
\end{equation*}
for some root of unity $\e$. Note that
\begin{equation*}
\prod_{k\mid m''} (km)^{\mu(k)} \in p^a (\Q^\times)^2,
\end{equation*}
where $a=1$ if $m''=p^r$ is a prime power, and $a=0$ if $m''$ has at least two distinct prime divisors. Thus, the conditions above are equivalent to saying that $\prod_{\d \mid N} \eta(\d \tau)^{r_{\d}}$ (with $r_{\d} \in \Z$) is a modular function on $X_0(N)$ if and only if
\begin{enumerate}
\item[(0)]
$\sum r_\d =0$,
\item
$\sum r_\d \d \equiv 0 \pmod {24}$,
\item
$\sum r_\d (N/\d) \equiv 0 \pmod {24}$,
\item
$\prod \d^{r_\d} \in (\Q^\times)^2$.
\end{enumerate}

\ms
As an application of Theorem \ref{theorem: modular unit suff}, we obtain the following.
\begin{cor}\label{corollary: modular function Gmh}
Let $n=(3, L)$. If $L$ is odd, then
\begin{equation*}
\prod_{m \in \mathcal{D}_N} \prod_{h=1}^{\ell(m)-1} \left(\frac{F_{m, h}}{F_{m, 0}}\right)^{nL a_{m, h}} ~~ \text{ for some } ~a_{m, h} \in \Z
\end{equation*}
is a modular function on $X_0(N)$.
\end{cor}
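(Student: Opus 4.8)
The plan is to apply Theorem~\ref{theorem: modular unit suff} to the single function
\begin{equation*}
g=\prod_{m \in \mathcal{D}_N} \prod_{h=1}^{\ell(m)-1}\left(\frac{F_{m,h}}{F_{m,0}}\right)^{n\, a_{m,h}},
\end{equation*}
so that $g^L$ is precisely the product in the statement. Writing $g=\prod_{m}\prod_{h=0}^{\ell(m)-1}F_{m,h}^{e_{m,h}}$ with $e_{m,h}=n\,a_{m,h}$ for $h\geq 1$ and $e_{m,0}=-n\sum_{h\geq 1}a_{m,h}$, it suffices to verify conditions (1)--(4) of that theorem. Since the $a_{m,h}$ are arbitrary integers, conditions (1)--(3) reduce to the corresponding statements for each individual factor $(F_{m,h}/F_{m,0})^n$, and I will only need to treat $m$ with $\ell=\ell(m)>1$ (otherwise the inner product is empty).

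Three of the four conditions are immediate. By Corollary~\ref{corollary: order at special cusps} the orders of $F_{m,h}$ at $\infty$ and at $\vect 1 {N_0}$ do not depend on $h$; since $L$ is odd we have $4\nmid N$, so $N$ is either odd (whence $N_0=N$ and $\vect 1{N_0}=\infty$) or twice an odd integer, which is exactly the range in which that corollary applies. Hence $\tn{ord}_\infty g=\tn{ord}_{\vect 1{N_0}}g=0$, giving (1) and (3). For (4), the cancelling exponent of $F_{m,0}$ forces $\sum_{h=0}^{\ell(m)-1}e_{m,h}=0$ for every $m$, so each mod-$2$ sum vanishes.

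The \emph{heart of the matter} is condition (2), integrality of the order at $0$. From the $P_2$-expression in the proof of Corollary~\ref{corollary: order at special cusps} (using that $\ell\mid m''$, which follows from $\ell^2\mid N/m$) one obtains
\begin{equation*}
n\cdot\tn{ord}_0\!\left(\frac{F_{m,h}}{F_{m,0}}\right)=\frac{n}{4\ell}\,T,\qquad T:=\sum_{\a \in (\zmod{m''})^\times} r_\a(r_\a-\ell),
\end{equation*}
where $r_\a\in\{0,\dots,\ell-1\}$ is the residue of $\a h$ modulo $\ell$, and I must show $4\ell\mid nT$. The pairing $\a\leftrightarrow m''-\a$ has no fixed point (as $m''\geq\ell\geq 3$) and sends $r_\a$ to $\ell-r_\a$ because $\ell\mid m''$; each $r_\a(r_\a-\ell)$ is even since $\ell$ is odd, so the pairing yields $4\mid T$. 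As $\gcd(4,\ell)=1$, it remains to prove $\ell\mid nT$. Modulo $\ell$ we have $r_\a(r_\a-\ell)\equiv (\a h)^2$, so $T\equiv h^2\Sigma\pmod\ell$ with $\Sigma=\sum_{\a\in(\zmod{m''})^\times}\a^2$; taking $h$ prime to $\ell$ this reduces to $\ell\mid n\Sigma$. Substituting the closed form of $\Sigma$ computed in the proof of Lemma~\ref{lemma: lemma 9} and writing $m''=\ell s$, the requirement $\ell\mid n\Sigma$ becomes $6\mid n\,s\,B$, where $B=2m''\p(m'')+\prod_{p\mid m''}(1-p)$.

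The final step is a short case analysis, and the factor of $3$ here is the only genuine obstacle — it is exactly what the correcting exponent $n=(3,L)$ is designed to absorb (the $8$ in $24$ being harmless because $\ell$ is odd). Since $\ell>1$ has an odd prime factor dividing $m''$, the product $\prod_{p\mid m''}(1-p)$ is even, so $2\mid B$; thus it suffices to prove $3\mid n\,s\,B$. If $3\mid L$ then $n=(3,L)=3$ and we are done. If $3\nmid L$ then $3\nmid\ell$: when $3\mid m''$ we get $3\mid s$; and when $3\nmid m''$, the integer identity $m''B=6\Sigma$ forces $3\mid m''B$, hence $3\mid B$. In every case $3\mid n\,s\,B$, so condition (2) holds and Theorem~\ref{theorem: modular unit suff} gives that $g^L$ is a modular function on $X_0(N)$, as claimed.
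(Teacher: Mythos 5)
Your proof is correct and follows the same skeleton as the paper's: invoke Theorem~\ref{theorem: modular unit suff}, dispose of conditions (1), (3), (4) immediately (the orders of $F_{m,h}$ at $\infty$ and at $\vect 1 {N_0}$ are independent of $h$ by Corollary~\ref{corollary: order at special cusps}, and the exponents of $F_{m,h}$ and $F_{m,0}$ cancel in the mod-$2$ sums), and put all the work into condition (2); whether one applies the theorem to the whole product, as you do, or to each factor $G_{m,h}=(F_{m,h}/F_{m,0})^n$ separately, as the paper does, is immaterial. The genuine difference is how integrality of the order at $0$ is proved. The paper starts from the M\"obius closed form of Corollary~\ref{corollary: order at special cusps}, namely $\frac{nm''}{24\ell}\sum_{k\mid m''}\frac{\mu(k)}{k}\bigl((\ell,kh)^2-\ell^2\bigr)$, substitutes $d=(\ell,k)$, $k=dk'$, $\ell=d\ell'$ to see that each summand is $\frac{n}{24}$ times an integer times $(\ell',h)^2-\ell'^2$, and finishes termwise: $(\ell',h)^2\equiv\ell'^2\pmod 8$ since $\ell'$ is odd, and modulo $3$ either $n=3$ (when $3\mid L$) or $(\ell',h)^2\equiv\ell'^2\pmod 3$ (when $3\nmid L$). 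You instead keep the raw residue sum $T=\sum_\a r_\a(r_\a-\ell)$, extract $4\mid T$ from the fixed-point-free involution $\a\mapsto m''-\a$ (using $\ell\mid m''$ and $m''\geq 3$), reduce modulo $\ell$ to the power sum $\Sigma=\sum_{\a\in(\zmod{m''})^\times}\a^2$, whose closed form the paper computes inside the proof of Lemma~\ref{lemma: lemma 9}, and use the identity $6\Sigma=m''B$ to localize the obstruction at the prime $3$; your three-case analysis is then correct. The paper's route is shorter because the quadratic congruences apply term by term; yours reuses Lemma~\ref{lemma: lemma 9} and makes more explicit why $n=(3,L)$ is exactly the correction factor needed. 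One step that both you and the paper leave implicit: the assertion ``$3\nmid L$ implies $3\nmid\ell$'' needs $3\mid\ell\Rightarrow 3\mid L$, which holds because $3\mid\ell$ gives $9\mid N$ and hence $3\mid L$ (every integer whose square divides $N$ divides $L$).
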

\begin{proof}
Let $m\in \mathcal{D}_N$. Suppose that $\ell(m)>1$ and let $G_{m, h}:=\left(\frac{F_{m, h}}{F_{m, 0}}\right)^n$. It suffices to show that $G_{m, h}^L$ is a modular function on $X_0(N)$.
By Theorem \ref{theorem: modular unit suff}, it suffices to check that all the four conditions for $G_{m, h}$ are satisfied. By Corollary \ref{corollary: order at special cusps}, the orders of $G_{m, h}$ at the cusps $\infty$ and $\vect 1 {N_0}$ are $0$, so the first  and third conditions are satisfied. Also, the order of $G_{m, h}$ at $0$ is
\begin{equation*}
A=\frac{nm''}{24\ell} \sum_{k \mid m''}  \frac{\mu(k)}{k}((\ell, kh)^2-\ell^2).
\end{equation*}
For simplicity, let $d=(\ell, k)$ (which depends on $k$), $k=dk'$ and $\ell=d\ell'$. 
Since $(k', \ell')=1$, we have $(\ell, kh)=d(\ell', k'h)=d(\ell', h)$. Also, $a:=\frac{m''}{k'\ell'} \in \Z$ as $k'$ and $\ell'$ both divide $m''$ and $(k', \ell')=1$. Thus, we have 
\begin{equation*}
A = \frac{n}{24}\sum_{k \mid m''} \frac{m''d^2 \mu(k)}{k\ell}((\ell', h)^2-\ell'^2)=\frac{n}{24}\sum_{k \mid m''} \mu(k)a((\ell', h)^2-\ell'^2).
\end{equation*}
Note that since $\ell'$ is odd (we assume that $L$ is odd), we have $(\ell', h)^2 \equiv \ell'^2 \equiv 1 \pmod 8$. Thus, if $L$ is divisible by $3$, then we have $A\in \Z$. Otherwise, $\ell'$ is not divisible by $3$ and so $(\ell', h)^2 \equiv \ell'^2 \equiv 1 \pmod 3$. Therefore we have $A \in \Z$ in both cases, and hence the second condition is also satisfied. 
Lastly, the fourth condition is obviously satisfied. Thus, the result follows.
\end{proof}

\ms
For the proof of Theorem \ref{theorem: modular unit iff} and \ref{theorem: modular unit suff}, we need the following lemmas. 

\begin{lem}\label{lemma: lemma 8}
Let $G$ be the subgroup of $\G_0(N)$ generated by $\mat 1 1 0 1$ and $\mat 1 0 N 1$.
\begin{enumerate}[(i)]
\item
Let
\begin{equation*}
f(\tau)=\prod_{m \in \mathcal{D}_N} \prod_{h=0}^{\ell(m)-1} F_{m, h}(\tau)^{e_{m, h}} ~~\text{ for some } e_{m, h} \in \Z.
\end{equation*}
Suppose that the orders of $f$ at the cusps $0$ and $\infty$ are integers. Then for any $\g \in \G_0(N)$, the value of the root of unity $\e$ in $f(\g \tau)=\e f(\tau)$ depends only on the right coset $G \g$ of $\g$ in $\G_0(N)$.
\item
Every right coset of $G$ in $\G_0(N)$ contains an element $\mat a b c d$ such that $24 \dd b$, $24N \dd c$ and $c>0$.
\end{enumerate}
\end{lem}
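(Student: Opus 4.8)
The statement splits into two essentially independent assertions, and I would treat them separately.

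\emph{Part (i).} Each $F_{m,h}$ is holomorphic and nowhere vanishing on $\bbH$, and by Proposition~\ref{proposition: 2.1} transforms with no automorphy factor (weight $0$); hence so does $f$, and by Lemma~\ref{lemma: F is modular} for every $\g\in\G_0(N)$ there is a root of unity $\e(\g)$ with $f(\g\tau)=\e(\g)f(\tau)$. The first thing I would record is that $\e\colon\G_0(N)\to\C^\times$ is a group homomorphism: since $f$ is nowhere zero on $\bbH$, comparing $f(\g_1\g_2\tau)=\e(\g_1\g_2)f(\tau)$ with $f(\g_1(\g_2\tau))=\e(\g_1)\e(\g_2)f(\tau)$ yields $\e(\g_1\g_2)=\e(\g_1)\e(\g_2)$. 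Writing $T=\mat1101$ and $U=\mat10N1$ for the two generators of $G$, the assertion that $\e(\g)$ depends only on the coset $G\g$ is equivalent to the triviality of $\e$ on $G$, which (as $\e$ is a homomorphism and $G=\langle T,U\rangle$) amounts to the two identities $\e(T)=1$ and $\e(U)=1$. Thus the whole of (i) reduces to computing these two multipliers.

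The plan for the two multipliers is to read them off from the Fourier expansions of $f$ at $\infty$ and at $0$. For $T$: the explicit product expansions of the $F_{m,h}$ from Section~\ref{section2} show that $f(\tau)=\e_0\,q^{\,\tn{ord}_\infty(f)}(1+\cdots)$ with $q=e^{2\pi i\tau}$, where every exponent of $q$ occurring differs from $\tn{ord}_\infty(f)$ by an integer; applying $\tau\mapsto\tau+1$ then multiplies $f$ by $e^{2\pi i\,\tn{ord}_\infty(f)}$, so $\e(T)=e^{2\pi i\,\tn{ord}_\infty(f)}$. For $U$: conjugating by $S=\mat0{-1}10$ turns $U$ into a translation, since $S^{-1}US=\mat1{-N}01$, so $\td f:=f\circ S$ satisfies $\td f(\tau-N)=\e(U)\td f(\tau)$; expanding $\td f$ at $\infty$ (by definition the expansion of $f$ at the cusp $0$) in powers of $e^{2\pi i\tau/N}$, the leading exponent in $q$ is $\tn{ord}_0(f)/N$ and the successive exponents again differ by integer multiples of $1/N$, so comparison gives $\e(U)=e^{-2\pi i\,\tn{ord}_0(f)}$. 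Hence the hypotheses $\tn{ord}_\infty(f)\in\Z$ and $\tn{ord}_0(f)\in\Z$ force $\e(T)=\e(U)=1$, completing (i). The one point needing genuine care is verifying that the exponents in each expansion are integer-spaced (in $q$ at $\infty$, and in $e^{2\pi i\tau/N}$ at $0$); this is where I would invoke the explicit product expansions of $F_{m,h}$ together with the fact that the relevant cusp widths are $1$ and $N$.

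\emph{Part (ii).} Here I would argue constructively. Left multiplication by $T^k$ sends $\mat abcd$ to $\mat{a+kc}{b+kd}{c}{d}$, and left multiplication by $U^k$ sends it to $\mat{a}{b}{c+kNa}{d+kNb}$; so within a right coset $G\g$ one may freely add $k(c,d)$ to the top row and $kN(a,b)$ to the bottom row. Writing $c=Nc_1$ (possible as $N\mid c$) and working modulo $24$, the plan is a three-step Euclidean reduction. First, apply a suitable $U^{k_1}$ to arrange $\gcd(d,24)=1$: for each prime $q\in\{2,3\}$, either $q\nmid Nb$, in which case $k_1$ may be chosen modulo $q$ so that $q\nmid d+k_1Nb$, or $q\mid Nb$, in which case $\det=1$ already forces $q\nmid d$; by the Chinese remainder theorem a single $k_1$ modulo $6$ works. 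Second, apply $T^{k_2}$ with $k_2\equiv -b\,d^{-1}\pmod{24}$ to achieve $24\mid b$. Third, noting that $24\mid b$ together with $\det=1$ gives $ad\equiv1\pmod{24}$ and hence $\gcd(a,24)=1$, apply $U^{k_3}$ with $k_3\equiv -c_1 a^{-1}\pmod{24}$ to achieve $24\mid c_1$, i.e. $24N\mid c$. Finally, applying $U^{24j}$ for a suitable $j\in\Z$ changes $c$ by multiples of $24Na$ while preserving both $24\mid b$ and $24N\mid c$, so one can arrange $c>0$.

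I expect the main obstacle to lie in Part~(i): pinning down that $\e(T)$ and $\e(U)$ are exactly $e^{2\pi i\,\tn{ord}_\infty(f)}$ and $e^{-2\pi i\,\tn{ord}_0(f)}$, rather than some unrelated roots of unity. Everything hinges on the integrality of the gaps between Fourier exponents at the two cusps; once that is in hand the order hypotheses do the rest, and Part~(ii) is then a routine, if slightly delicate, congruence argument.
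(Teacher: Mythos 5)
Your argument is correct, and it differs from the paper's mainly in being self-contained: the paper disposes of both parts by citation to \cite{WY20}. For (i) the paper simply quotes Lemma~7 of that reference, recording in a footnote exactly your key observation --- that $\mat 1101$ and $\mat 10N1$ generate the isotropy subgroups of the cusps $\infty$ and $0$, so triviality of the multiplier on these two generators is equivalent to integrality of the two orders. Your write-up supplies what the citation hides: the multiplier $\e$ is a homomorphism because $f$ is nowhere vanishing, and its values on the generators are $e^{2\pi i\,\tn{ord}_\infty(f)}$ and $e^{-2\pi i\,\tn{ord}_0(f)}$, the latter via conjugation by $\mat 0{-1}10$; moreover the exponent-spacing you flag as the delicate point does hold (at $\infty$ the exponents of each $E_{\a m\ell,\d hN'}(N'\tau)$ lie in $N'B_2(\a/m'')/2+\Z_{\geq 0}$, and at $0$ the transformation formula places all exponents of $f\circ\mat 0{-1}10$ in $\tn{ord}_0(f)/N+\tfrac{1}{N}\Z_{\geq 0}$), so your reduction closes. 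For (ii) the paper imports the statement that every right coset contains a matrix with $24N\dmid c$ from \cite[Lem.~8]{WY20}, and then performs the same two final adjustments you make: $T^{k}$ to get $24\dmid b$ (using that $24\dmid c$ forces $(d,6)=1$) and $U^{24kN}$ to get $c>0$. You instead prove the divisibility from scratch by the reordered reduction: first $\gcd(d,24)=1$ via $U$ and CRT at the primes $2,3$, then $24\dmid b$ via $T$, then $24N\dmid c$ via $U$ using $ad\equiv 1\pmod{24}$; this is valid, including the implicit point that $24\dmid b$ rules out $a=0$, which is what the final positivity step needs (and which the paper leaves unsaid). What your route buys is independence from \cite{WY20}; what the paper's buys is brevity.
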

\begin{proof}
Part (i) is the content of Lemma 7 in \cite{WY20}.\footnote{Let $\g=\mat 1 1 0 1$ and $\g'=\mat 1 0 N 1$. Since $\g$ (resp. $\g'$) is a generator of the isotropy subgroup of the cusp $\infty$ (resp. $0$), $f(\g \tau)=f(\tau)$ (resp. $f(\g' \tau)=f(\tau)$) for all $\tau$ if and only if the order of $f$ at the cusp $\infty$ (resp. $0$) is an integer.}
 In Lemma 8 of \textit{op. cit.}, we have shown that every right coset of $G$ in $\G_0(N)$ contains an element $\mat a b c d$ such that $24N \dd c$. Now we have
\begin{equation*}
\mat 1 k 0 1 \mat a b c d= \mat {a+kc} {b+kd} c d.
\end{equation*}
Since $24 \dd c$, we have $(d, 6)=1$ and hence there exists an integer $k$ such that $24 \dd (b+kd)$. Furthermore, we have
\begin{equation*}
\mat 1 0 {24kN} 1 \mat a b c d=\mat a b {c+24akN} {d+24bkN}.
\end{equation*}
Thus, there is an integer $k$ such that $c+24akN>0$. This proves (ii).
\end{proof}

\begin{lem}\label{lemma: lemma 10}
Let $m \in \mathcal{D}_N$ such that $m''\neq 2$ and let $a$ be an odd integer prime to $m''$. 
If $L$ is odd, then we have
\begin{equation*}
\prod_{\a \in S_{m''}} (-1)^{\gauss{\a a/{m''}}}=\begin{cases}
(-1)^{\frac{(a-1)\p(m'')}{4}} & \text{ if }~m'' \text{ is even},\\
\leg {a}{p} & \text{ if } m''=p^k \text{ for some odd prime } p,\\
1 & \text{ otherwise}.
\end{cases}
\end{equation*}
\end{lem}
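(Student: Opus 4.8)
The plan is to reduce to a convenient set of representatives and then treat odd and even values of $m''$ separately; the hypothesis that $L$ is odd is exactly what makes this case division clean. First I would record the elementary identity $\gauss{\a a/m''}+\gauss{(m''-\a)a/m''}=a-1$ (valid because $\a a$ is never a multiple of $m''$), so that, $a$ being odd, the factor $(-1)^{\gauss{\a a/m''}}$ depends only on $\pm\a \bmod m''$. Hence the product is independent of the chosen $S_{m''}$, and I may assume $S_{m''}=\{\a : 1\le \a\le \gauss{m''/2},\ (\a,m'')=1\}$. Next I would use that $\ell=\ell(m)$ divides $L$ (its square divides $N/m\mid N$), so $L$ odd forces $\ell$ odd and therefore $v_2(m'')=v_2(m')\le 1$; thus $m''$ is either odd or $\equiv 2\pmod 4$, and (as $m''\neq 2$) the even case is $m''=2u$ with $u$ odd and $u>1$. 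Writing $T:=\sum_{\a\in S_{m''}}\gauss{a\a/m''}$, the goal is to compute $T\bmod 2$.

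For the even case I would argue directly. From $a\a=m''\gauss{a\a/m''}+r_\a$ with $r_\a:=(a\a\bmod m'')$, summing over $S_{m''}$ and folding each residue $r_\a$ into $(0,u)$ (the folded residues again run over $S_{m''}$, since multiplication by $a$ and $\a\mapsto m''-\a$ permute the coprime classes) gives, after dividing by $2$ and reducing modulo $2$, the congruence $T\equiv\tfrac{a-1}{2}\sum_{\a\in S_{m''}}\a \pmod 2$; here I use that $u$ is odd and that every coprime residue is odd, so the ``upper-half count'' and the odd folded residues together contribute an even amount. Since each $\a$ is odd, $\sum_{\a\in S_{m''}}\a\equiv |S_{m''}|=\p(m'')/2\pmod 2$, whence $T\equiv\tfrac{a-1}{2}\cdot\tfrac{\p(m'')}{2}=\tfrac{(a-1)\p(m'')}{4}$, the claimed value.

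For the odd case I would first strip the coprimality condition by M\"obius inversion: with $e:=m''/d$ and $g(e):=\sum_{\b=1}^{(e-1)/2}\gauss{a\b/e}$, one has $T=\sum_{d\mid m''}\mu(d)\,g(m''/d)$, so modulo $2$ only squarefree $d$ survive and $(-1)^T=\prod_{d\mid m'',\ d\ \mathrm{sqfree}}(-1)^{g(m''/d)}$. Then I would invoke the Eisenstein lemma in its Jacobi-symbol form, $(-1)^{g(e)}=\leg{a}{e}$ for odd $e$ coprime to the odd integer $a$ (which itself follows from $g(e)\equiv\#\{k:ak\bmod e>e/2\}\pmod 2$ together with Gauss's lemma), to rewrite this as $\prod_{d}\leg{a}{m''/d}$. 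Finally, writing $m''=\prod_{i=1}^t p_i^{k_i}$, the total exponent of $\leg{a}{p_i}$ after summing over squarefree divisors is $\sum_{I}(k_i-[i\in I])=2^{t}k_i-2^{t-1}=2^{t-1}(2k_i-1)$, which is even precisely when $t\ge 2$ and odd when $t=1$. Thus $(-1)^T=1$ when $m''$ has at least two prime factors and $(-1)^T=\leg{a}{p}$ when $m''=p^k$, matching the statement (the case $t=0$, i.e. $m''=1$, gives the empty product $1$).

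The main obstacle I anticipate is the odd case, specifically the passage through composite moduli. The two delicate inputs are (i) the Jacobi-symbol form of Eisenstein's lemma for composite odd $e$ — the congruence $g(e)\equiv\#\{k:ak\bmod e>e/2\}$ relies on the fact that folding $\{ak\bmod e\}$ bijects onto $\{1,\dots,(e-1)/2\}$, which holds for any $a$ coprime to $e$ irrespective of the coprimality of the individual $k$ — and (ii) the parity bookkeeping of the exponent $2^{t-1}(2k_i-1)$, which is exactly what produces the three-way split. By contrast, the opening reductions (independence of the representatives, $v_2(m'')\le 1$) and the entire even case are routine once the folding permutation is observed.
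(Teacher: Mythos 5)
Your proof is correct, and its odd-$m''$ half takes a genuinely different route from the paper's. The even case is in substance the paper's own argument: the paper works with an arbitrary set of representatives, introduces the flip count $s$ (the number of $\a$ with $\a a \equiv -\a^* \pmod{m''}$), and finds that the two occurrences of $s$ cancel, leaving $(-1)^{(a-1)\p(m'')/4}$ --- exactly the cancellation you observe after folding, so the difference there is only that you first normalize to lower-half representatives via $\gauss{\a a/m''}+\gauss{(m''-\a)a/m''}=a-1$. For odd $m''$, however, the paper never leaves the unit group: multiplying the congruences $\a a \equiv \pm\a^* \pmod{m''}$ over $\a \in S_{m''}$ gives $(-1)^s \equiv a^{\p(m'')/2} \pmod{m''}$, and the trichotomy then comes from group structure --- when $m''$ has at least two prime factors the exponent of $(\zmod{m''})^\times$ divides $\p(m'')/2$, forcing the value $1$, while for $m''=p^k$ Euler's criterion modulo $p^k$ gives $\leg{a}{p}$. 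You instead strip coprimality by M\"obius inversion and invoke the Eisenstein--Gauss lemma for Jacobi symbols, finishing with the subset-parity count $2^{t-1}(2k_i-1)$. Both arguments are sound, and your M\"obius computation and exponent count check out; the trade-off is that the paper's proof is entirely self-contained (nothing beyond Euler's criterion and the exponent of the unit group), whereas yours rests on Gauss's lemma for composite odd moduli, $\leg{a}{e}=(-1)^{\#\{k\le (e-1)/2 \,:\, ak \bmod e > e/2\}}$ --- a true but less standard classical result (provable, e.g., via Zolotarev's lemma on the sign of multiplication by $a$ on $\zmod{e}$), which in a final write-up you should cite or prove rather than merely name. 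In exchange, your divisor-subset bookkeeping makes the three-way split of the answer combinatorially transparent rather than an artifact of group theory.
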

\begin{proof}
For $\a \in S_{m''}$, let $\a^*$ be the unique element in $S_{m''}$ such that $\a a \equiv \a^* \pmod {m''}$ or $\a a \equiv -\a^* \pmod {m''}$. Let $s$ be the number of $\a$ in $S_{m''}$ such that the latter case occurs. Then we have
\begin{equation*}
\begin{split}
\sum_{\a \in S_{m''}} \gauss {\frac{\a a}{m''}} &=\sum_{\a : \a a \equiv \a^* \pmod {m''}} \frac{\a a-\a^*}{m''}+\sum_{\a : \a a \equiv -\a^* \pmod {m''}} \frac{\a a-(m''-\a^*)}{m''}\\
&=-s+(a-1)\sum_{\a \in S_{m''}} \frac{\a}{m''}+2\sum_{\a : \a a \equiv -\a^* \pmod {m''}} \frac{\a^*}{m''}.
\end{split}
\end{equation*}
So we have $m''_0\sum_{\a \in S_{m''}} \gauss {\frac{\a a}{m''}}=-m''_0 s+2^{1-v_2(m'')}C$, where $m''_0$ is the odd part of $m''$ and 
\begin{equation*}
C=\frac{a-1}{2} \sum_{\a \in S_{m''}} \a + \sum_{\a : \a a \equiv -\a^* \pmod {m''}} \a^*
\end{equation*}
is an integer. Since we assume that $N$ is not divisible by $4$, $v_2(m'')$ is either $0$ or $1$. Hence we have
\begin{equation*}
\prod_{\a \in S_{m''}} (-1)^{\gauss{\a a/{m''}}} =\begin{cases}
(-1)^s & \text{ if $m''$ is odd},\\
(-1)^{s+C} & \text{ if $m''$ is even}.
\end{cases}
\end{equation*}

First, suppose that $m''$ is odd. Note that
\begin{equation*}
\prod_{\a \in S_{m''}} (\a a) \equiv (-1)^s \prod_{\a \in S_{m''}} \a^* \equiv (-1)^s \prod_{\a \in S_{m''}} \a \pmod {m''}.
\end{equation*}
Thus, we have $(-1)^s \equiv a^{\p(m'')/2} \pmod {m''}$. If $m''$ has at least two distinct prime divisors, then the exponent of the group $(\zmod {m''})^\times$ is a divisor of $\p(m'')/2$, and hence $(-1)^s =1$. If $m''=p^k$ is an odd prime power, then $a^{\p(m'')/2} \equiv \leg a p \pmod {m''}$ by (a generalization of) Euler's theorem. 

Next, suppose that $m''$ is even. Then for any $\a \in S_{m''}$ we have $\a \equiv 1 \pmod 2$.
Thus, we have
\begin{equation*}
C \equiv \frac{(a-1)}{2} \times \frac{\p(m'')}{2}+s \pmod 2.
\end{equation*}
This completes the proof.
\end{proof}

\begin{prop}\label{proposition: proposition 11}
Let
\begin{equation*}
f(\tau)=\prod_{m \in \mathcal{D}_N} \prod_{h=0}^{\ell(m)-1} F_{m, h}(\tau)^{e_{m, h}} \quad \text{ for some } ~e_{m, h} \in \Z
\end{equation*}
and
\begin{equation*}
\G:=\left\{ \mat a b c d \in \G_0(N) : a \equiv d \equiv \pm 1 \pmod {N/L} \right\}.
\end{equation*}
Suppose that $L$ is odd. Then $f$ is a modular function on $X_\G$ if and only if all the following conditions are satisfied:
\begin{enumerate}
\item
the order of $f$ at $\infty$ is an integer;
\item
the order of $f$ at $0$ is an integer; 
\item 
the order of $f$ at the cusp $\vect 1 {{N_0}}$ is an integer, where $N_0$ is the odd part of $N$;
\item (the mod $L$ condition) we have
\begin{equation*}
\sum_{m \in \mathcal{D}_N} m\p(m'') \sum_{h=0}^{\ell(m)-1} he_{m, h}\equiv 0 \pmod L.
\end{equation*}
\end{enumerate}
\end{prop}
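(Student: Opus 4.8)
The plan is to use that, by Lemma \ref{lemma: F is modular}, each $F_{m, h}$ transforms by a root of unity under $\G_0(N)$, so that $f(\g\tau)=\chi(\g)f(\tau)$ where $\g\mapsto\chi(\g)$ is a homomorphism $\chi\colon\G_0(N)\to\C^\times$ into the roots of unity (the homomorphism property follows from $f((\g_1\g_2)\tau)=\chi(\g_1)\chi(\g_2)f(\tau)$). Since $f$ is a product of the $F_{m,h}$, it is meromorphic with divisor supported on the cusps, so $f$ descends to a meromorphic function on $X_\G$ precisely when $\chi$ is trivial on $\G$. Thus the whole statement reduces to computing $\chi|_\G$ and showing that its triviality is equivalent to the four conditions; this proves both directions at once.

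First I would set up the reduction. By the footnote to Lemma \ref{lemma: lemma 8}, conditions (1) and (2) are equivalent to $\chi\left(\mat 1 1 0 1\right)=\chi\left(\mat 1 0 N 1\right)=1$, i.e. to $G\subseteq\ker\chi$; since $\chi$ is a homomorphism, $\chi(\g)$ then depends only on the coset $G\g$. By Lemma \ref{lemma: lemma 8}(ii) every coset has a representative $\g=\mat a b c d$ with $24\mid b$, $24N\mid c$ and $c>0$, and because $G\subseteq\G$ such a representative can be taken inside $\G$. The arithmetic fact I would isolate is that $m''\mid N/L$ for every $m\in\mathcal{D}_N$: comparing $p$-adic valuations, $v_p(m'')=\lceil(v_p(N)-v_p(m))/2\rceil\le\lceil v_p(N)/2\rceil=v_p(N/L)$, and moreover $\ell(m)\mid L\mid N/L$. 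Consequently any $\g\in\G$ satisfies $a\equiv d\equiv\epsilon\pmod{m''}$ and $d\equiv\epsilon\pmod{\ell(m)}$ for a uniform sign $\epsilon=\pm1$ determined by $a\bmod(N/L)$.

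Next I would compute $\chi(\g)$ on such a representative. Applying Lemma \ref{lemma: lemma 9} factorwise (and Remark \ref{remark: root of unity for m''=2} when $m''=2$) gives $F_{m,h}(\g\tau)=(-1)^{(d-1)\p(m'')/4}e^{2\pi i B}\prod_{\a\in S_{m''}}E_{\a a m\ell,\ \d d h N'}(N'\tau)$. Using $a\equiv\epsilon\pmod{m''}$ and $d\equiv\epsilon\pmod{\ell(m)}$, I would reduce the indices modulo $N$ via (\ref{equation: 1}): the second index $\d d h N'$ becomes $\epsilon\d h N'$ with no extra factor, since $E_{g,h+N}=E_{g,h}$ and $\ell(m)N'=N$, while reducing the first index $\a a m\ell$ to $\epsilon\a m\ell$ produces explicit powers of $\z_{\ell(m)}$ and signs; the flip $E_{-g,-h}=-\z_N^{-h}E_{g,h}$ handles the case $\epsilon=-1$. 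The surviving product is $F_{m,h}(\tau)$, so $F_{m,h}(\g\tau)=\chi_{m,h}(\g)F_{m,h}(\tau)$ with $\chi_{m,h}(\g)$ an explicit product of an $\ell(m)$-power root of unity and a sign. Raising to $e_{m,h}$, multiplying, and separating the odd ($\ell(m)$-power) part from the $2$-power part, one obtains $\chi(\g)=\pm\,\z_L^{E(\g)}$, where $E(\g)$ is a linear form in the $e_{m,h}$ whose vanishing for all $\g\in\G$ is equivalent to the single congruence $\sum_{m}m\p(m'')\sum_h h\,e_{m,h}\equiv0\pmod L$, namely condition (4); the signs are exactly those evaluated by Lemma \ref{lemma: lemma 10} and Remark \ref{remark: root of unity for m''=2}.

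Finally I would assemble the equivalence. Letting $\g$ range over $\G$ (varying the sign $\epsilon$ and the residue $a\bmod(N/L)$), triviality of $\chi|_\G$ forces simultaneously the vanishing of the $\z_L$-exponent — which yields the mod $L$ condition (4) — and the vanishing of every sign, which is where conditions (1)--(3) enter: the $2$-power signs produced by Lemma \ref{lemma: lemma 10} and by the $m''=2$ factors collapse to integrality of the orders of $f$ at $\infty$, $0$ and $\vect 1 {N_0}$ (this last cusp being the genuinely $2$-adic condition, nonvacuous exactly when $N=2N_0$, since $L$ odd forces $4\nmid N$). Conversely, granting (1)--(4), the same formula gives $\chi(\g)=1$ for all $\g\in\G$. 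I expect the main obstacle to be the root-of-unity bookkeeping of the third step: one must show that the a priori $\lcm(2m'',24)$-th roots of unity from Lemma \ref{lemma: F is modular} conspire, after the index reductions, into precisely the $\z_L$-exponent $\sum_m m\p(m'')\sum_h h\,e_{m,h}$ together with a sign insensitive to $h$ — that is, that the residual $2$- and $3$-power contributions and the $\a$-dependence in $\sum_{\a\in S_{m''}}\a\d$ cancel correctly. Controlling these cancellations, and matching the sign part to conditions (1)--(3) via Lemma \ref{lemma: lemma 10} and Remark \ref{remark: root of unity for m''=2}, is the delicate part; the hypothesis that $L$ is odd is precisely what keeps the $\z_L$-part and the sign part from interfering.
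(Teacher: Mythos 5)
Your computational core is the same as the paper's: reduce to coset representatives with $24 \dd b$, $24N \dd c$, $c>0$ via Lemma \ref{lemma: lemma 8}; use $m'' \dd N/L$ and $\ell(m) \dd L \dd N/L$ to get $a \equiv d \equiv \pm 1 \pmod{m''}$ and $\pmod{\ell(m)}$; compute each factor by Lemma \ref{lemma: lemma 9} together with the reductions (\ref{equation: 1}); split the resulting root of unity into a sign times a power of $\z_L$ (legitimate precisely because $L$ is odd); and extract condition (4) from the $\z_L$-part because $k=(a-1)/(N/L)$ runs over all residues modulo $L$ as $\g$ varies in $\G$. This is exactly the paper's proof.

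The gap is in your final assembly, where you assert that the vanishing of the sign part ``collapses to'' integrality of the orders of $f$ at $\infty$, $0$ and $\vect 1 {N_0}$. That is not an equivalence. What the sign computation actually yields on the nice representatives are mod-$2$ congruences among the exponents (in the paper's notation, $e_{N_0,0}\equiv 0 \pmod 2$ and $\sum_{m\in\mathcal{D}_0}\sum_h e_{m,h}\equiv 0\pmod 2$, its condition (\ref{equation: 02})); these follow from (1)--(3) via the explicit orders of Corollary \ref{corollary: order at special cusps}, but they are strictly weaker than (1)--(3), which are integrality statements for rational numbers with denominators dividing $48$. Concretely, $f=F_{1,0}$ on $X_0(50)$ satisfies every sign condition vacuously, yet its order at $\infty$ is $1/6$. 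So your argument proves sufficiency of (1)--(4), and necessity of (1), (2) (from $G\subseteq\ker\chi$) and of (4) (from the $\z_L$-part), but it never proves necessity of (3). The missing ingredient is the observation the paper compresses into ``the first three conditions are clear'': the width-two parabolic fixing the cusp $\vect 1 {N_0}$, namely $\mat{1-2N_0}{2}{-2N_0^2}{1+2N_0}$, has diagonal entries $\equiv 1 \pmod{N/L}$ (since $N/L \dd N = 2N_0$) and hence lies in $\G$, so $\G$-invariance of $f$ directly forces the order at $\vect 1 {N_0}$ to be an integer, exactly as $\mat 1 1 0 1$ and $\mat 1 0 N 1$ force (1) and (2). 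Adding this one observation repairs the logical structure, after which your outline coincides with the paper's. Two minor points: Lemma \ref{lemma: lemma 10} is not the relevant tool here --- it is needed for Theorem \ref{theorem: modular unit iff}, where $a$ genuinely permutes $S_{m''}$, whereas in the present proposition $a\equiv\pm1\pmod{m''}$ and all signs come from (\ref{equation: 1}); and you can avoid the case $\epsilon=-1$ entirely by replacing $\g$ with $-\g$, as the paper does.
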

\begin{proof}
The first three conditions are clear, which we assume from now on. 

First, we consider the case where $N=2N_0$ for some odd integer $N_0$. 
For simplicity, for $n \in \{0, 1\}$ let
\begin{equation*}
\mathcal{D}_n:=\{ m \in \mathcal{D}_N : m''=2^n p^r \text{ for some } p\equiv 3\pmod 4 \text{ and  an integer } r\geq 1\}. 
\end{equation*}
Then, for any $m\in \mathcal{D}_N$ different from $N_0$, we have $\p(m'')\equiv 0 \pmod 2$; and moreover 
\begin{equation}\label{equation: 01}
\prod_{p \mid m''} (1-p) \equiv 2 \pmod 4 \iff \p(m'') \equiv 2 \pmod 4 \iff m \in \mathcal{D}_0 \cup \mathcal{D}_1.
\end{equation}
Since $L$ is odd, $m$ is odd if and only if $m''$ is even. 
Thus by Corollary \ref{corollary: order at special cusps}, the first condition implies that 
\begin{equation*}
e_{N_0, 0} \equiv 0 \pmod 2 \qa \frac{e_{N_0, 0}}{2}+\sum_{m \in \mathcal{D}_1} \sum_{h=0}^{\ell(m)-1} e_{m, h} \equiv 0 \pmod 2,
\end{equation*}
and the third condition implies that
\begin{equation*}
e_{N_0, 0} \equiv 0 \pmod 2  \qa \frac{e_{N_0, 0}}{2}+\sum_{m \in \mathcal{D}_0 \cup \mathcal{D}_1} \sum_{h=0}^{\ell(m)-1} e_{m, h} \equiv 0 \pmod 2.
\end{equation*}
Combining these, we get 
\begin{equation}\label{equation: 02}
e_{N_0, 0} \equiv 0 \pmod 2  \qa \sum_{m \in \mathcal{D}_0} \sum_{h=0}^{\ell(m)-1} e_{m, h} \equiv 0 \pmod 2.
\end{equation}

Let $m \in \mathcal{D}_N$ and $\g=\mat a b c d \in \G$. 
Replacing $\g$ by $-\g$ if necessary, we assume that $a \equiv d \equiv 1 \pmod {N/L}$. Note that multiplication by $\mat 1 1 0 1$ or $\mat 1 0 N 1$ on the left of $\g$ does not change the residue class of $a$ modulo $N$. 
Therefore by Lemma \ref{lemma: lemma 8}, we may further assume that $24 \dd b$, $24N \dd c$ and $c>0$. 
If $m''=2$, i.e., $m=N_0$, then by Remark \ref{remark: root of unity for m''=2} we have
\begin{equation*}
F_{N_0, 0}(\g \tau)=(-1)^{\frac{a^2-1}{8}} F_{N_0, 0}(\tau).
\end{equation*}
Assume that $m''\neq 2$, i.e., $m\neq N_0$. Then by Lemma \ref{lemma: lemma 9}, we have\footnote{The factor $e^{2\pi i B}$ disappears since $d\equiv 1 \pmod {N/L}$ and $d, \ell$ are both odd.}
\begin{equation*}
F_{m, h}(\g \tau)=(-1)^{\frac{(d-1)\p(m'')}{4}} \prod_{\a \in S_{m''}} E_{\a am\ell, \d dhN'}(N'\tau).
\end{equation*}
Let $k=\frac{a-1}{(N/L)} \in \Z$.\footnote{Note that since $a$ can be any integer congruent $1$ modulo ${N/L}$, $k$ can be any integer. Also, since multiplication by $\mat 1 1 0 1$ or $\mat 1 0 N 1$ on the left of $\g$ does not change the residue class of $a$ modulo $N$, the residue class of $k$ modulo $L$ does not change.} 
Since $N/L$ is a multiple of $m''$ for any $m$, we have $k_m:=\frac{a-1}{m''} \in \Z$. By (\ref{equation: 1}), we have
\begin{equation*}
E_{\a am\ell, \d dhN'}(N'\tau)=(-\z_\ell^{-\d h})^{\a k_m} E_{\a m \ell, \d h N'}(N'\tau).
\end{equation*}
If $k_m$ is odd, then $m''$ must be even and so $\a$ is odd for any $\a \in S_{m''}$. Since $\a \d \equiv 1 \pmod \ell$ and $k_m=\frac{km\ell}{L}$, we have
(whether $k_m$ is odd or even)
\begin{equation*}
\begin{split}
F_{m, h}(\g \tau)&=(-1)^{\frac{k_m\p(m'')}{2}} (-1)^{\frac{(d-1)\p(m'')}{4}} \z_{\ell}^{-h k_m \p(m'')/2} F_{m, h}(\tau)\\
&=(-1)^{(\frac{\p(m'')}{2})(\frac{km\ell}{L}+\frac{d-1}{2})}\z_L^{-\frac{kmh\p(m'')}{2}} F_{m, h}(\tau).
\end{split}
\end{equation*} 
Since $\ell$ and $L$ are both odd, we have $\frac{km\ell}{L} \equiv km \pmod 2$. Also since $e_{N_0, 0}$ is even by (\ref{equation: 02}), we have
\begin{equation*}
f(\g \tau)=\e \mu f(\tau),
\end{equation*}
where
\begin{equation*}
\e=\prod_{m \in \mathcal{D}_N, m\neq N_0} \prod_{h=0}^{\ell(m)-1} (-1)^{(\frac{\p(m'')}{2})(km+\frac{d-1}{2})e_{m, h}}
\end{equation*}
and
\begin{equation*}
\mu=\prod_{m \in \mathcal{D}_N, m\neq N_0} \prod_{h=0}^{\ell(m)-1} \z_L^{-\frac{kmh\p(m'')e_{m, h}}{2}}.
\end{equation*}

Since $\e \in \{ \pm 1\}$ and $\mu$ is an $L$-th root of unity with odd $L$, $\e\mu=1$ if and only if $\e$ and $\mu$ are both equal to $1$. 

First, if $a \equiv d\equiv 1 \pmod 4$, then $k$ must be even, and hence $\e=1$. If $a\equiv d\equiv  3\pmod 4$, then $k$ is odd. Therefore by (\ref{equation: 01}), $\e=1$ if and only if $\sum_{m \in \mathcal{D}_0} \sum_{h=0}^{\ell(m)-1} e_{m, h} \equiv 0 \pmod 2$, which follows by (\ref{equation: 02}).

Next, $\mu=1$ if and only if 
\begin{equation*}
k\sum_{m\in \mathcal{D}_N, m \neq N_0} m \p(m'') \sum_{h=0}^{\ell(m)-1} he_{m, h} \equiv 0 \pmod L.
\end{equation*}
Since $k$ can be arbitrary if we vary $\g \in \G$, the result follows in this case.

If $N$ is odd, the argument is much simpler as $m''\neq 2$. We leave the details to the readers.
\end{proof}

Now, we are ready to prove Theorem \ref{theorem: modular unit iff}.
\begin{proof}[Proof of Theorem \ref{theorem: modular unit iff}]
We use the same notation as in the proof of Proposition \ref{proposition: proposition 11}. 
Also, we assume that $N=2N_0$ for some odd integer $N_0$ and leave the simpler case for odd $N$ to the readers.

By Proposition \ref{proposition: proposition 11}, in order for $f$ to be a modular function on $\G_0(N)$, conditions (1), (2), (3) and (4) must hold, which we assume from now on. In particular, we have (\ref{equation: 02}). 

Let $\g=\mat a b c d \in \G_0(N)$. As before, by Lemma \ref{lemma: lemma 8} we may assume that $24 \dd b$, $24N \dd c$ and $c>0$. By Lemma \ref{lemma: lemma 9} and (\ref{equation: 1}), for any $m \in \mathcal{D}_N$  with $m''\neq 2$, we have
\begin{equation*}
\begin{split}
F_{m, h}(\g \tau)&=(-1)^{\frac{(d-1)\p(m'')}{4}} e^{2\pi i B} \prod_{\a \in S_{m''}} E_{\a a m\ell, \d d h N'}(N'\tau)\\
&=(-1)^{\frac{(d-1)\p(m'')}{4}} e^{2\pi i B} \prod_{\a \in S_{m''}} (-\z_\ell^{-\d dh})^{\gauss{\a a/{m''}}}E_{\a^* m\ell, \d^* h N'}(N'\tau)\\
&=(-1)^{\frac{(d-1)\p(m'')}{4}} e^{2\pi i B} \left(\prod_{\a \in S_{m''}} (-\z_\ell^{-\d dh})^{\gauss{\a a/{m''}}} \right) F_{m, h}(\tau),\\
\end{split}
\end{equation*}
where $B=\frac{h(1-d)}{2\ell}\sum_{\a \in S_{m''}} \d$ and for $\a \in S_{m''}$, we let $\a^*$ be the element in $S_{m''}$ such that $\a a \equiv \a^* \pmod {m''}$ or $\a a \equiv -\a^* \pmod {m''}$, and $\d^*\in \Z$ satisfies $\a^* \d^* \equiv 1 \pmod {m''}$. Since $a\equiv d \pmod 4$ and $e_{N_0, 0} \equiv 0 \pmod 2$, by Remark \ref{remark: root of unity for m''=2} and Lemma \ref{lemma: lemma 10} we have
\begin{equation*}
f(\g \tau)=\e_1 \e_2 \mu f(\tau),
\end{equation*}
where
\begin{equation*}
\begin{split}
\e_1&=\prod_{m \in \mathcal{D}_N, m'' \not\in 2\Z} \prod_{h=0}^{\ell(m)-1}(-1)^{(\frac{a-1}{2})(\frac{\p(m'')}{2})e_{m, h}}, \\
\e_2&=\prod_{m:m''=p^r, p\neq 2}\prod_{h=0}^{\ell(m)-1} \leg a p^{e_{m, h}}
\end{split}
\end{equation*}
and $\mu$ is an $L$-th root of unity depending on $\g$. As seen in the proof of Proposition \ref{proposition: proposition 11}, since $L$ is odd, 
$\e_1\e_2\mu=1$ if and only if $\e_1\e_2=1$ and $\mu=1$. Also, we have $\e_1=1$ by (\ref{equation: 01}) and (\ref{equation: 02}). Furthermore, for each prime divisor $p$ of $N$, there is always an integer $a$ such that $(a, 24N)=1$, $\leg a p=-1$ and $\leg a q=1$ for all odd prime divisors $q$ of $N$ different from $p$. Thus, $\epsilon_2=1$ if and only if the last condition is satisfied. 

Now the key observation is that $\g^{\p(N/L)} \in \G$. Thus, by the condition (4) and Proposition \ref{proposition: proposition 11} we have
\begin{equation*}
f(\g^{\p(N/L)} \tau)=\mu^{\p(N/L)} f(\tau)=f(\tau)
\end{equation*}
and so $\mu^{\p(N/L)}=1$. Since $(\p(N/L), L)=1$, we get $\mu=1$, as desired. This completes the proof.
\end{proof}

\begin{proof}[Proof of Theorem \ref{theorem: modular unit suff}]
Suppose that all the conditions are fulfilled. By the same argument as in the proof of Theorem \ref{theorem: modular unit iff}, it suffices to show that for each $\g=\mat a b c d \in \G_0(N)$ with $24 \dd b$, $24N \dd c$ and $c > 0$, we have
$f^L(\g \tau)=f^L(\tau)$.
Again, by the same argument as above, we have
\begin{equation*}
f(\g \tau)=\mu f(\tau)
\end{equation*}
for some $L$-th root of unity $\mu$. Thus, $f^L(\g \tau)=\mu^L f^L(\tau)=f^L(\tau)$ and the result follows.
\end{proof}

\ms
\section{Proof of Theorem $1.4$}\label{section4}
In this section, we prove Theorem \ref{theorem: main theorem}.
For a positive integer $N$, we note that the following two statements are equivalent:
\begin{enumerate}
\item
We have $\scC(N)=\scC_N(\Q)$.
\item
For any primes $q$, we have
\begin{equation}\label{equation: conjecture}
\scC(N)[q^\infty] = \scC_N(\Q)[q^\infty],
\end{equation}
where $A[q^\infty]$ denotes the $q$-primary subgroup of $A$.
\end{enumerate}

As mentioned before, a cuspidal divisor $D$ on $X_0(N)$ is rational if and only if $\s(D)=D$ for all $\s \in \Gal(\Q(\z_L)/\Q)$. 
\begin{lem}\label{lemma: averaging case}
Suppose that $q$ does not divide $\p(L)$. Then we have
\begin{equation*}
\scC(N)[q^\infty] = \scC_N(\Q)[q^\infty].
\end{equation*}
\end{lem}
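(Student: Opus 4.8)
The plan is to use the trivial inclusion $\scC(N)[q^\infty]\subseteq\scC_N(\Q)[q^\infty]$ and to establish the reverse one by an averaging argument over the finite Galois group $G:=\Gal(\Q(\z_L)/\Q)$, whose order is exactly $\p(L)$. Since all cusps of $X_0(N)$ are defined over $\Q(\z_L)$, the Galois action on cuspidal divisors factors through $G$, and a cuspidal divisor is rational precisely when it is fixed by every element of $G$. Hence it suffices to show $\scC_N(\Q)[q^\infty]\subseteq\scC(N)$.

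So fix a class $[D]\in\scC_N(\Q)[q^\infty]$, represented by a cuspidal divisor $D$ of degree $0$; by definition $[D]\in J_0(N)(\Q)$, so $\s([D])=[D]$ for all $\s\in G$. First I would form the averaged divisor $D^\ast:=\sum_{\s\in G}\s(D)$. This is again a cuspidal divisor of degree $0$, and it is \emph{genuinely} rational: each $\tau\in G$ merely permutes the summands, so $\tau(D^\ast)=D^\ast$. Hence $[D^\ast]\in\scC(N)$. On the other hand, passing to divisor classes and using $[\s(D)]=\s([D])=[D]$, one obtains $[D^\ast]=\sum_{\s\in G}[\s(D)]=\p(L)\,[D]$. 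Therefore $\p(L)\,[D]\in\scC(N)$.

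Finally I would invert $\p(L)$ on the $q$-primary part. Since $[D]$ has order a power of $q$, say $q^a$, and $q\nmid\p(L)$ by hypothesis, there is an integer $t$ with $t\,\p(L)\equiv 1\pmod{q^a}$. Then $[D]=t\,\p(L)\,[D]=t\bigl(\p(L)\,[D]\bigr)\in\scC(N)$ because $\scC(N)$ is a subgroup. As $[D]$ is $q$-primary, this yields $[D]\in\scC(N)[q^\infty]$, which gives the reverse inclusion and hence the desired equality.

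I do not expect a serious obstacle here, as this is the standard transfer/averaging trick. The only points requiring care are that $D^\ast$ is invariant \emph{as a divisor} rather than merely up to linear equivalence, that the Galois action genuinely factors through a group of order $\p(L)$, and that the coprimality hypothesis $q\nmid\p(L)$ is exactly what makes multiplication by $\p(L)$ an automorphism of $\scC_N(\Q)[q^\infty]$.
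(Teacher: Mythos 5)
Your proof is correct and follows essentially the same route as the paper: both form the Galois average $\sum_{\s\in\Gal(\Q(\z_L)/\Q)}\s(D)$, note it is a genuinely rational cuspidal divisor linearly equivalent to $\p(L)D$, and then use $q\nmid\p(L)$ to invert $\p(L)$ modulo the $q$-power order of $[D]$. The only cosmetic difference is that the paper works with divisors and linear equivalence ($D\sim(1+aq^r)D=k\p(L)D\sim kD'$) while you phrase the same computation in terms of classes in the Jacobian.
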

\begin{proof}
Let $D$ be a cuspidal divisor of degree $0$ such that $[D] \in \scC_N(\Q)[q^\infty]$.
Since $\scC(N) \subset \scC_N(\Q)$, it suffices to show that $D$ is linearly equivalent to a rational cuspidal divisor.
Assume that $[D]$ has order $q^r$ in $J_0(N)$, so we have $q^r D \sim 0$. Since $q$ does not divide $\p(L)$, there is an integer $k$ such that
$k\p(L) \equiv 1 \pmod {q^r}$. Let $a=\frac{k\p(L)-1}{q^r} \in \Z$ and let
\begin{equation*}
D'=\sum_{\s \in \Gal(\Q(\z_L)/\Q)} \s(D),
\end{equation*}
which is a rational cuspidal divisor by its construction. Since $\s(D) \sim D$ for any $\s \in \Gal(\Q(\z_L)/\Q)$, we have
$D' \sim \p(L) D$. Thus, we have
\begin{equation*}
D \sim (1+aq^r)D=k \p(L) D \sim kD'.
\end{equation*}
This completes the proof.
\end{proof}

From now on, let $N=p^2M$, where $p$ is a prime and $M$ is a squarefree integer.
Note that if $p=2$, then all the cusps of $X_0(N)$ are defined over $\Q$, so Theorem \ref{theorem: main theorem} obviously holds. Thus, we henceforth assume that $p$ is an odd prime. 
In this case, $L=p$ is odd and $\p(L)=p-1$ is relatively prime to $p$.
So in order to prove Theorem \ref{theorem: main theorem}, it suffices to prove (\ref{equation: conjecture}) for a prime $q$ different from $p$ by Lemma \ref{lemma: averaging case}. Thus, we further assume that $q$ is a prime different from $p$.
Since $L=p$ is an odd prime, $\ell(m)=p$ if $\ell(m)\neq 1$. Let $\mathcal{D}_N^*=\{ m \in \mathcal{D}_N : \ell(m)=p \}$.

Let $D$ be a cuspidal divisor of degree $0$ such that $[D] \in \scC_N(\Q)[q^\infty]$. As above, it suffices to show that $D$ is linearly equivalent to a rational cuspidal divisor.
Assume that $[D]$ has order $q^r$ in $J_0(N)$ and let $f$ be a modular function on $X_0(N)$ such that $q^r D=\tn{div} (f)$. By Theorem \ref{theorem: theorem 1}, (up to constant) we have
\begin{equation*}
f(\tau)=\prod_{m \in \mathcal{D}_N} \prod_{h=0}^{\p(\ell(m))-1} F_{m, h}(\tau)^{e_{m, h}} \quad \text{ for some } e_{m, h} \in \Z.
\end{equation*}
We claim that
\begin{enumerate}[(a)]
\item
$q^r \dd e_{m, h}$ for any $m \in \mathcal{D}_N^*$ and $1\leq h \leq p-2$; and
\item
the product
\begin{equation*}
F=\prod_{m \in \mathcal{D}_N^*} \prod_{h=1}^{p-2} \left(\frac{F_{m, h}}{F_{m, 0}}\right)^{npq^{-r}e_{m, h}}
\end{equation*}
is a modular function on $X_0(N)$, where $n=(3, p)$.
\end{enumerate}

Assume that the two claims hold. Let $D'=npD-\tn{div}(F)$. Then we assert that $\s(D')=D'$ for any $\s \in \Gal(\Q(\z_L)/\Q)$. For simplicity, let 
\begin{equation*}
G=\prod_{m \in \mathcal{D}_N^*} \prod_{h=1}^{p-2} \left( \frac{F_{m, h}}{F_{m, 0}}\right)^{e_{m, h}}.
\end{equation*}
Then we easily have $F^{q^r}=G^{np}$ and so
\begin{equation*}
q^r D'=q^r npD-q^r\tn{div}(F)=\tn{div} (f^{np}) - \tn{div}(F^{q^r})=\tn{div} \left(\left( f/G\right)^{np}\right).
\end{equation*}
By direct computation, we have
\begin{equation*}
f/G=\prod_{m \in \mathcal{D}_N\sm \mathcal{D}_N^*} F_{m, 0}^{e_{m, 0}}\prod_{m\in \mathcal{D}_N^*} F_{m, 0}^{A_m} ,
\end{equation*}
where $A_m:=\sum_{h=0}^{p-2} e_{m, h}$. 
Hence by Corollary \ref{corollary: rationality}, the assertion follows.

Since $np$ is relatively prime to $q^r$, there is an integer $k$ such that $npk \equiv 1 \pmod {q^r}$. Let $a=\frac{npk-1}{q^r} \in \Z$. Then we have
\begin{equation*}
D \sim (1+a q^r)D=np kD=k(D'+\tn{div}(F)) \sim k D'
\end{equation*}
as desired. So it suffices to prove two claims above.

\ms
\noindent {\it Proof of} (a). Let $s \in \{1, \dots, p-1\}$. By Lemma \ref{lemma: galois action}, we have
\begin{equation*}
\s_s(q^r D)= \s_s(\tn{div} (f))=\tn{div}\left(\prod_{m \in \mathcal{D}_N}  \prod_{h=0}^{\p(\ell(m))-1} F_{m, sh}^{e_{m,h}}\right),
\end{equation*}
where the integer $sh$ in the subscript is understood to be taken modulo $p$. 
Since $\s_s(D)-D$ is assumed to be a principal divisor, the product
\begin{equation*}
\prod_{m \in \mathcal{D}_N^*} \prod_{h=1}^{p-2} \left(\frac{F_{m, sh}}{F_{m, h}}\right)^{e_{m, h}}
\end{equation*}
must be the $q^r$-th power of some modular unit. By Theorem \ref{theorem: variant of theorem 1}, this implies that
\begin{equation*}
\begin{cases}
q^r \dd e_{m, h}  & \text{ if } ~~h\equiv -s \pmod p \qqo sh \equiv -1 \pmod p,\\
q^r \dd (e_{m, s^*h}-e_{m, h}) & \text{ otherwise},
\end{cases}
\end{equation*}
where $s^* \in \Z$ such that $ss^* \equiv 1 \pmod p$. By taking $s \in \{2, \dots, p-1\}$, we conclude that $q^r \dd e_{m, h}$ for any $m \in \mathcal{D}_N^*$ and $1\leq h \leq p-2$. \qed

\ms
\noindent {\it Proof of} (b).  Since $q^{-r} e_{m, h} \in \Z$ by (a), the claim follows by Corollary \ref{corollary: modular function Gmh}. \qed


\section*{Acknowledgments}
J.-W. G. was supported by Grants 109-2115-M-002-017-MY2
of the Ministry of Science and Technology, Taiwan (Republic of China).  
Y.Y. was supported by Grants 109-2115-M-002-010-MY3 and
110-2115-M-002-010 of the Ministry of Science and Technology,
Taiwan (Republic of China).  
H.Y. was supported by National Research Foundation of Korea (NRF) grant funded by the Korea government (MSIT) 
(Nos. 2019R1C1C1007169 and 2020R1A5A1016126). 
M.Y. was supported by the National Research Foundation of Korea (NRF) grant funded by the Korea government (MSIT) (No. 2020R1C1C1A01007604), by Korea Institute for Advanced Study (KIAS) grant funded by the Korea government, and by Yonsei University Research Fund (2022-22-0125).

\end{document}